\newcommand{\msc}[2][2000]{%
  \let\@oldtitle\@title%
  \gdef\@title{\@oldtitle\footnotetext{#1 \emph{Mathematics subject
        classification.} #2}}% 
}
\theoremstyle{plain}
\newtheorem{theorem}{Theorem}[section]
\newtheorem{lem}[theorem]{Lemma}
\newtheorem{cor}[theorem]{Corollary}
\newtheorem{definition}[theorem]{Definition}
\theoremstyle{remark}
\newtheorem{rem}[theorem]{Remark}
\numberwithin{equation}{section}
\renewcommand{\Re}{\operatorname{Re}}
\renewcommand{\Im}{\operatorname{Im}}
\newcommand*\diff{\mathop{}\!\mathrm{d}}
\newcommand{\wsconv}{\overset{\ast}{\rightharpoonup}}
\DeclarePairedDelimiter\abs{\lvert}{\rvert}%
\DeclarePairedDelimiter\norm{\lVert}{\rVert}%
\newcommand{\tnorm}[1]{{\left\vert\kern-0.25ex\left\vert\kern-0.25ex\left\vert #1 
    \right\vert\kern-0.25ex\right\vert\kern-0.25ex\right\vert}}
\let\oldabs\abs
\def\abs{\@ifstar{\oldabs}{\oldabs*}}
\let\oldnorm\norm
\def\norm{\@ifstar{\oldnorm}{\oldnorm*}}
\def\C{{\mathbb C}}% complex numbers
\def\R{{\mathbb R}}% real numbers
\def\({\left(}
\def\){\right)}
\def\<{\left\langle}
\def\>{\right\rangle}
\def\Eq#1#2{\mathop{\sim}\limits_{#1\rightarrow#2}}
\def\d{{\partial}}
\def\eps{\varepsilon}
\numberwithin{equation}{section}
\begin{document}

\title[Logarithmic Gross-Pitaevskii equation]{Logarithmic
  Gross-Pitaevskii equation}  
\author[R. Carles]{R\'emi Carles}
\address{Univ Rennes, CNRS\\ IRMAR - UMR 6625\\ F-35000
  Rennes, France}
\email{Remi.Carles@math.cnrs.fr}

\author[G. Ferriere]{Guillaume Ferriere}
\address{Univ. Lille, Inria, CNRS, UMR 8524\\ Laboratoire Paul
  Painlev\'e\\ F-59000 Lille\\ France}
\email{guillaume.ferriere@inria.fr}
\begin{abstract}
 We consider the Schr\"odinger equation with a logarithmic nonlinearty
 and non-trivial boundary conditions at infinity. We prove that the
 Cauchy problem is globally well posed in the energy space, which
 turns out to correspond to the energy space for the standard
 Gross-Pitaevskii equation with a cubic nonlinearity, in small
 dimensions.  We then characterize the solitary and
 traveling waves in the one dimensional case. 
\end{abstract}

\thanks{RC is supported by Centre Henri Lebesgue,
  program ANR-11-LABX-0020-0. A CC-BY public copyright license has been applied by the authors to the present document and will be applied to all subsequent versions up to the Author Accepted Manuscript arising from this submission.}

\maketitle

%%%%%%%%%%%%%%%%%%%%%%%%%%%%%%%%%%%%%%%%%%%%%%%%%%%%%%%%%%%%%%%%%%%%%%%

\section{Introduction}
\label{sec:intro}
We consider the Schr\"odinger equation with a logarithmic nonlinearity,
\begin{equation} \tag{logGP}
    i  \partial_t u + \Delta u = \lambda u \ln{\abs{u}^2}, \quad
   u_{\mid t=0} =u_0, \label{logGP}
\end{equation}
where $x \in \mathbb{R}^d$, $d \geq 1$, $\lambda > 0$, and with the
boundary condition  at infinity
\begin{equation*}
    \abs{u (t, x)} \rightarrow 1 \quad \text{as} \quad \abs{x} \rightarrow \infty.
\end{equation*}
Such boundary condition is reminiscent of the standard
Gross-Pitaevskii equation
\begin{equation}
  \label{eq:GP}
  i  \partial_t u + \Delta u =\(|u|^2-1\)u,\quad
   u_{\mid t=0} =u_0,
\end{equation}
whose Cauchy problem was studied in
\cite{BeSa99,GalloGP,Gerard_Cauchy_GP,PG08,Zhidkov92,Zhidkov01}. The
most complete result regarding this aspect is found in
\cite{Gerard_Cauchy_GP,PG08}, where, for $d\le 3$, \eqref{eq:GP} is proved to be
globally well posed in the energy space
\begin{equation*}
  E_\textnormal{GP}\coloneqq \{ u \in H^1_\textnormal{loc} (\mathbb{R}^d) \, | \, \mathcal{E}_\textnormal{GP} (u) < \infty \},
\end{equation*}
where $\mathcal{E}_\textnormal{GP}$ is the Ginzburg-Landau energy
\begin{equation*}
   \mathcal{E}_{\textnormal{GP}} (u) \coloneqq \norm{\nabla
     u}_{L^2(\R^d)}^2 + \frac{1}{2}\int_{\R^d} \(|u|^2-1\)^2\diff x.
\end{equation*}
See also \cite{KiMuVi16} for an analogous energy-critical problem. 
The logarithmic nonlinearity was introduced in the context of
Schr\"odinger equations in \cite{BiMy76}, as it is the only
nonlinearity satisfying the following tensorization property: if
$u_1(t,x_1),\dots ,u_d(t,x_d)$ are solutions to the one-dimensional
equation, then $u(t,x):=u_1(t,x_1)\times\dots\times u_d(t,x_d)$ solves
the $d$-dimensional equation. This model has regained interest in
various domains of physics: quantum mechanics \cite{yasue},
quantum optics \cite{BiMy76,hansson,KEB00,buljan}, nuclear
physics \cite{Hef85}, Bohmian mechanics \cite{DMFGL03}, effective
quantum gravity \cite{Zlo10}, theory of 
superfluidity and  Bose-Einstein condensation \cite{BEC}.
The logarithmic model may generalize the Gross-Pitaevskii equation, used in
the case of two-body interaction, to the case of three-body
interaction; see \cite{Zlo10,Zlo11}.

\subsection{Cauchy problem}
\label{sec:cauchy}

From the mathematical point of view, the Cauchy problem for
\eqref{logGP} is more intricate than it may seem at first
glance. The difficulty does not lie in the behavior of the logarithm
at infinity, of course, but in its singularity at the origin. In the
case of vanishing boundary condition at infinity,
$u_0\in H^1(\R^d)$, the Cauchy problem was solved in
\cite{cazenave-haraux} (case $\lambda<0$, see also
\cite{HayashiM2018}) and \cite{carlesgallagher,GuLoNi10} (for any
$\lambda\in \R$), by constructing solutions of a regularized equation
converging to a solution to the exact equation, which turns out to be
unique. Even when nontrivial boundary conditions at infinity are
considered, like in the present paper, the singularity of the
logarithm at the origin is the main difficulty, as the cancellation of
the wave function is difficult (if not impossible) to rule out for
\emph{all} $(t,x)\in \R\times \R^d$.  The nonlinearity fails to be
locally Lipschitz continuous, so the fixed point argument employed in
\cite{Gerard_Cauchy_GP} in the case of \eqref{eq:GP} is hard to
implement. \color{black} Similarly, the existence result from
\cite{GalloGP} holds for more general nonlinearities than in
\eqref{eq:GP}, of the form $f(|u|^2)u$, but the function $f$ is
required to be at least $\mathcal C^2$ on $[0,\infty)$: in our case, $f(r)=\ln
r$ is not even continuous at the origin. 
\color{black}Even if a solution of the form $u\in 1+H^1$ is considered in
the one dimensional case (where the $H^1$-norm control the $L^\infty$
norm), it is not obvious to make a solution global in time, even if
$\|u_0-1\|_{H^1}\ll 1$. Indeed, the conservation of the energy in the
context of \eqref{logGP} reads
$d\mathcal{E}_{\textnormal{logGP}}/dt=0$, where
 \begin{equation*}
    \mathcal{E}_{\textnormal{logGP}} (u) \coloneqq \norm{\nabla
      u}_{L^2(\R^d)}^2 + \lambda \int_{\R^d} \( |u|^2\ln|u|^2-|u|^2+1\)\diff x.
\end{equation*}
Consider $F$ the antiderivative of the logarithm, such that $F(1) = 0$:
$F(y) = y \ln{y} - y + 1$ for $y > 0$, and $F(0) = 1$. The above
potential energy is the integral in space of $F(|u|^2)$. 
We note that $F \geq 0$ on $[0, \infty)$, and Taylor's formula yields
\begin{equation*}
  0\le F(y)=y^2\int_0^1\frac{1-s}{1+sy}\diff s\le y^2,
\end{equation*}
and so $0\le \mathcal{E}_{\textnormal{logGP}}(u)\le
\mathcal{E}_{\textnormal{GP}}(u)$. But this is not enough to solve
\eqref{logGP} in the energy space, neither locally or globally in
time. As a byproduct of our analysis, we will see that in low space
dimensions, $d\le 4$, the energy spaces for \eqref{logGP},
\begin{equation*}
    E_\textnormal{logGP} \coloneqq \{ v \in H^1_\textnormal{loc} (\mathbb{R}^d) \, | \, \mathcal{E}_\textnormal{logGP} (v) < \infty \},
\end{equation*} 
and
\eqref{eq:GP} coincide,
$E_{\textnormal{logGP}}= E_{\textnormal{GP}}$. Note that the energy
space $E_{\textnormal{GP}}$ was described very accurately in
\cite{Gerard_Cauchy_GP,PG08}. 
%Charge :
%\begin{equation*}
 %   M_\textnormal{logGP} (v) \coloneqq \int ( \abs{v}^2 - 1 ) \diff x.
%\end{equation*}
%
%The charge is also formally conserved by the flow of the equation.

Since we will distinguish the notions of mild and weak solutions, we
clarify these two notions in the next definition. 
\begin{definition}
Let $u_0 \in  E_\textnormal{logGP}$, and $0\in I\subset \R$. We say that $u\in L^\infty_\textnormal{loc} (I,  E_\textnormal{logGP})$ is:\\
 $\bullet$ A weak solution to \eqref{logGP} on $I$ if it satisfies
 \begin{equation*}
       i  \partial_t u + \Delta u = \lambda u \ln{\abs{u}^2}
       \quad\text{in }\mathcal D'(I\times \R^d),
 \end{equation*}
and for any $\psi\in   \mathcal{C}_0^\infty(\R^d)$,
\begin{equation*}
  \lim_{t\to 0}\int_{\mathbb R^d}u(t,x)\psi (x)\diff x=\int_{\mathbb
    R^d}u_0(x)\psi (x)\diff x. 
\end{equation*}
$\bullet$  A mild solution to \eqref{logGP} on $I$ if it satisfies
  Duhamel's formula
\begin{equation*}
    u(t) = e^{i t \Delta} u_0 - i \lambda \int_0^t e^{i (t - s)
      \Delta} \( u(s) \ln{\abs{u (s)}^2} \) \diff s,\quad \forall t\in
    I.
\end{equation*}
\end{definition}

As we construct solutions to \eqref{logGP} as weak solutions, it is
sensible to ask whether or not these solutions are also mild
solutions, in the same fashion as in \cite{Segal63}. Our main result
regarding the Cauchy problem for \eqref{logGP} is the following:

\begin{theorem} \label{th:Cauchy_th_main}
    \color{black}Let $d\ge 1$.
For any $u_0 \in E_\textnormal{logGP}$, there exists a
    unique weak solution $u \in
    L^\infty_\textnormal{loc} (\mathbb{R}, 
    E_\textnormal{logGP})$ to
    \eqref{logGP}. Moreover, the 
    flow of \eqref{logGP} enjoys the following properties:
    \begin{itemize}
    \item The energy is conserved, $\mathcal{E}_{\textnormal{logGP}}
      (u(t))= \mathcal{E}_{\textnormal{logGP}}(u_0)$ for all $t\in
      \R$.
\item $u - u_0\in
    \mathcal{C}^0 (\mathbb{R}, L^2)$.
\item $u$ is also a mild  solution.
\item If $\Delta u_0 \in L^2$, then
    $u - u_0 \in W^{1, \infty}_\textnormal{loc}
    (\mathbb{R}, L^2) \cap L^\infty_\textnormal{loc} (\mathbb{R},
    H^2)$. 
    \end{itemize}
\end{theorem}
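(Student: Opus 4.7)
The plan is to follow the regularization strategy developed in \cite{cazenave-haraux,carlesgallagher,GuLoNi10} for the $H^1$ case, adapted to the nontrivial boundary condition. The first step is to replace $\ln\abs{u}^2$ by $\ln(\abs{u}^2+\eps)$ for $\eps>0$, so that $f_\eps(r)=\ln(r+\eps)$ lies in $\mathcal{C}^\infty([0,\infty))$ and Gallo's existence theorem \cite{GalloGP} (in low dimensions, where $E_\textnormal{logGP}=E_\textnormal{GP}$; otherwise a direct fixed point in a Zhidkov-type space $\phi+H^1$) yields a unique global solution $u^\eps\in L^\infty_\textnormal{loc}(\R,E_\textnormal{GP})$ with a conserved regularized energy $\mathcal{E}^\eps$, whose potential density is a well-chosen antiderivative $F_\eps\to F$. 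Because $F_\eps$ converges to $F$ with good uniform control, $\norm{\nabla u^\eps}_{L^2}$ and the potential part are bounded uniformly in $\eps$ on compact time intervals. Writing $u^\eps=\phi+v^\eps$ for a reference $\phi\in\mathcal{C}^\infty$ with $\abs{\phi}\to 1$ at infinity, the equation bounds $\d_t v^\eps$ in a suitable dual space, and an Aubin-Lions argument localised in space extracts a subsequence converging strongly in $L^2_\textnormal{loc}$ and almost everywhere to some limit $u$. Sub-quadratic growth of $z\mapsto z\ln\abs{z}^2$ at both $0$ and $\infty$ then yields dominated convergence in the nonlinearity, so $u$ is a weak solution.

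For uniqueness, the key ingredient is the one-sided Lipschitz estimate of \cite{cazenave-haraux},
\begin{equation*}
 \abs{\Im\bigl[(z_1\ln\abs{z_1}^2-z_2\ln\abs{z_2}^2)\overline{(z_1-z_2)}\bigr]}\le C\abs{z_1-z_2}^2\quad\text{for all } z_1,z_2\in\C.
\end{equation*}
Given two weak solutions $u_1,u_2$ with $u_1(0)=u_2(0)$, the difference $w=u_1-u_2$ ought to satisfy $\tfrac{d}{dt}\norm{w(t)}_{L^2}^2\le 2\abs{\lambda}C\norm{w(t)}_{L^2}^2$, and Gronwall forces $w\equiv 0$. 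This is the main obstacle: at the level of weak solutions in $E_\textnormal{logGP}$ one cannot differentiate $\norm{w(t)}_{L^2}^2$ directly, and the singularity of $\ln r$ at $r=0$ rules out both the usual fixed-point machinery and any genuine Lipschitz bound. The way out is to check first that $w\in\mathcal{C}^0(\R,L^2)$ (both solutions share the same boundary behaviour, and the $L^2$ bound for $w$ can be obtained by running the compactness argument on the difference of two regularised sequences), and then to justify the $L^2$ energy identity via a mollification in time, so that the one-sided inequality may be invoked termwise.

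The remaining properties flow from the construction. Conservation of $\mathcal{E}_\textnormal{logGP}$ comes from weak lower semi-continuity, $\mathcal{E}_\textnormal{logGP}(u(t))\le\liminf_{\eps\to 0}\mathcal{E}^\eps(u_0)=\mathcal{E}_\textnormal{logGP}(u_0)$, combined with time reversibility and uniqueness to upgrade the inequality to equality. For $u-u_0\in\mathcal{C}^0(\R,L^2)$, one verifies that $u\ln\abs{u}^2\in L^\infty_\textnormal{loc}(\R,L^2)$ (near infinity since $\abs{u}\to 1$, on bounded sets thanks to the integrability of $F(\abs{u}^2)$ together with Sobolev embeddings), so the equation gives $\d_t u\in L^\infty_\textnormal{loc}(\R,L^2+H^{-2})$; passing to the limit in Duhamel's formula for $u^\eps$ then upgrades $u$ to a mild solution. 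Finally, when $\Delta u_0\in L^2$, differentiating the regularised equation in time and testing against $\d_t u^\eps$ provide uniform bounds on $\Delta u^\eps$ in $L^\infty_\textnormal{loc}(\R,L^2)$, preserved in the limit, which yields $u-u_0\in W^{1,\infty}_\textnormal{loc}(\R,L^2)\cap L^\infty_\textnormal{loc}(\R,H^2)$.
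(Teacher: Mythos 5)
Your overall architecture (approximate, obtain uniform energy bounds, pass to the limit by compactness; prove uniqueness via the Cazenave--Haraux inequality and Gronwall; recover energy conservation from lower semicontinuity combined with uniqueness and time translation; read off the mild formulation and the $H^2$ propagation from the approximation) matches the paper's, and your uniqueness, energy-conservation, continuity and mild-solution steps are essentially those of Sections~\ref{sec:regularity} and~\ref{sec:construction}. The construction itself, however, goes a genuinely different way. The paper does \emph{not} regularize the logarithm: it runs a Galerkin scheme \`a la Ginibre--Velo \cite{Ginibre_Velo__Cauchy_NLS_revisited}, seeking $u_m=u_0+\sum_{k\le m}g_{m,k}(t)w_k$ with $(w_j)$ a Hilbert basis of $L^2$, solving the projected ODE system by Peano's theorem (the nonlinearity is only H\"older continuous, Lemma~\ref{lem:prop_lip_log}), and exploiting that the \emph{exact} energy $\mathcal E_{\textnormal{logGP}}$ is conserved along each $u_m$. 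Your route --- saturating the nonlinearity as in \eqref{eq:logNLSreg} and invoking well-posedness of the regularized Gross--Pitaevskii-type problem --- is precisely the alternative discussed in the Remark at the beginning of Section~\ref{sec:construction} and deliberately avoided there, because the regularized flow conserves an $\eps$-dependent energy whose comparison with $\mathcal E_{\textnormal{logGP}}$ requires ``possibly long computations''. What each approach buys: yours makes the $H^2$ step easy (the regularized nonlinearity is smooth, so one may literally differentiate the equation in time and test against $\d_t u^\eps$, as in \cite{carlesgallagher}), whereas the paper must run a delicate finite-difference argument on the Galerkin approximations (Lemma~\ref{lem:dt_um_unif_bound}); conversely, the Galerkin scheme yields $\mathcal E_{\textnormal{logGP}}(u_m(t))=\mathcal E_{\textnormal{logGP}}(u_0)$ for free.

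The one place where your write-up has a genuine gap rather than a stylistic difference is the starting point of the construction. You cite \cite{GalloGP} for global well-posedness of the regularized problem, but that result lives in $E_{\textnormal{GP}}$ in low dimensions while the theorem claims all $d\ge1$, and your fallback (``a direct fixed point in a Zhidkov-type space'') is asserted, not carried out. More importantly, every uniform-in-$\eps$ bound you need --- on $\norm{\nabla u^\eps}_{L^2}$, on $E_{\textnormal{pot}}(u^\eps)$, and on $\norm{u^\eps\ln(\abs{u^\eps}^2+\eps)}_{L^2}$ so as to control $\d_t u^\eps$ and run Aubin--Lions --- hinges on comparing the conserved regularized energy with $\mathcal E_{\textnormal{logGP}}$ uniformly as $\eps\to0$ (note that the regularized potential density vanishes at $\abs{u}^2=1-\eps$ rather than at $\abs{u}^2=1$). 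This comparison is the load-bearing step of your existence proof and it is exactly what you do not supply; it is plausible, but it is missing. A smaller point: in the uniqueness argument the paper does not mollify in time. Having shown $w=u-v\in\mathcal C^0(I,L^2)\cap L^\infty(I,H^1)$ and that the equation holds in $L^\infty(I,H^{-1})$ (because $u\ln\abs{u}^2\in L^\infty(I,L^2)$ by Corollary~\ref{cor:v_ln_v_Lp}), it pairs directly in the $H^{-1}\times H^1$ duality, which is the cleaner way to legitimize $\frac{\diff}{\diff t}\norm{w}_{L^2}^2$ before applying Lemma~\ref{lem:log_inequality}.
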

\color{black}
We emphasize that due to the singularity of the logarithm at the
origin, it is hopeless to look for a direct Picard fixed point
argument on Duhamel's formula in order to construct a solution. In
addition, like in the case of vanishing boundary conditions at
infinity evoked above, it is not clear that Strichartz estimates can
help in the construction of (local in time) solutions. 
\begin{rem}
  The parameter $\lambda>0$ can be set to  $1$, by
  considering the unknown function $v(t,x) = u\(\frac{t}{\lambda},\frac{x}{\sqrt
      \lambda}\)$. We choose to stick to the general notation $\lambda$ in
      order to keep track of the dependence on the nonlinear term more
      explicitly. 
\end{rem}
\color{black}
\subsection{Solitary and traveling waves}

There are many references regarding the existence, description and the
stability of solitary and traveling waves for \eqref{eq:GP}, as can
be seen for instance from the results evoked in \cite{volumeGP}. In
order to consider both solitary and traveling waves, we look for
solutions of the form
\begin{equation} \label{eq:trav_wave}
    u (t,x) = e^{i \omega t} \phi (x - ct),
\end{equation}
where $\omega \in \mathbb{R}$, $c \in \mathbb{R}^d$ and $\phi \in
E_\textnormal{logGP}$. In the case $c=0$, we classically call the
solution a solitary wave, and if $c\not =0$, we call it traveling
wave (stationary wave if $\omega=c=0$). From the mathematical point of
view, the study of traveling 
waves for \eqref{eq:GP} (with $\omega=0$) goes back to \cite{BeSa99},
and has known many 
developments since; see e.g. \cite{BeGrSa08,BeGrSa09,Ch12,ChMa17,Ma08,Ma13}
and references therein. \color{black}
We emphasize that in these generalizations, nonlinearities other than
cubic are considered, of the form $f(|u|^2)u$. However, it is always
assumed, among other assumptions, that $f$ is continuous on $[0,\infty)$: in the case considered
in the present paper, $f(r)=\ln r$ is unbounded at the origin, and
some work is needed. 
\color{black}

 In the one dimensional case, traveling waves for
\eqref{eq:GP} are either constant, or such that $0\le \abs{c} <\sqrt 2$, and
given explicitly by  
\begin{equation*}
  \phi_c(x) =
  \sqrt{1-\frac{c^2}{2}}\tanh\(\sqrt{1-\frac{c^2}{2}}\frac{x}{\sqrt
    2}\)+i\frac{c}{\sqrt 2},
\end{equation*}
up to space translation and a multiplicative constant of modulus $1$; see
e.g. \cite{BeGrSa08}. This solution $u(t,x)=\phi_c(x-ct)$ is usually
referred to as 
\emph{dark soliton}, and \emph{black soliton} (or \emph{kink solution})
in the case $c=0$ (the
only case where $\phi$ has a zero). In the case of \eqref{logGP}, we do
not have 
such an explicit formula, but somehow a very similar description. In
the general case $d\ge 1$, we have:

\begin{theorem} \label{th:value_omega}
Let $d\ge 1$.  \color{black}  If $\phi \in
E_\textnormal{logGP}$ and $u$, solution to \eqref{logGP}, is a
traveling wave of the form \eqref{eq:trav_wave}, then $\omega = 0$. 
\end{theorem}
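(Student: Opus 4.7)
The plan is to insert the traveling wave ansatz into \eqref{logGP} to obtain the profile equation for $\phi$, multiply by $\bar\phi$ to produce a real-valued pointwise identity, and then test this identity against a bump concentrated on a ball of radius $R$ before letting $R\to\infty$; every contribution but $\om$ will vanish thanks to finiteness of $\mathcal E_\textnormal{logGP}$.

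Substituting $u(t,x)=e^{i\om t}\phi(x-ct)$ into \eqref{logGP} yields, in $\mathcal D'(\R^d)$, the profile equation $\Delta\phi - ic\cdot\nabla\phi - \lambda\phi\ln\abs{\phi}^2 = \om\phi$. Multiplying by $\bar\phi$ and taking real parts gives
\begin{equation*}
  \om\abs{\phi}^2 = \Re(\bar\phi\Delta\phi) + c\cdot\Im(\bar\phi\nabla\phi) - \lambda\abs{\phi}^2\ln\abs{\phi}^2.
\end{equation*}
Fix $\chi\in \mathcal C_0^\infty(\R^d)$ with $\chi\ge 0$, $\mathrm{supp}\,\chi\subset B_1$ and $\int\chi\diff x=1$, and set $\chi_R(x)=\chi(x/R)$. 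Multiplying the identity by $\chi_R$, integrating over $\R^d$, and integrating by parts the Laplacian term produces
\begin{align*}
  \om\int\chi_R\abs{\phi}^2\diff x &= -\int\chi_R\abs{\nabla\phi}^2\diff x - \Re\int\bar\phi\,\nabla\chi_R\cdot\nabla\phi\diff x \\
  &\quad + c\cdot\int\chi_R\Im(\bar\phi\nabla\phi)\diff x - \lambda\int\chi_R\abs{\phi}^2\ln\abs{\phi}^2\diff x.
\end{align*}

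The crucial ingredient for passing to the limit is the sublinear estimate $\int_{B_R}\abs{\abs{\phi}^2-1}\diff x = O(R^{d/2})$, which I extract from $\int F(\abs{\phi}^2)\diff x<\infty$ by splitting on $\{\abs{\phi}\le\sqrt 2\}$ and $\{\abs{\phi}>\sqrt 2\}$ and using $F(y)\gtrsim (y-1)^2$ on $[0,2]$ together with $F(y)\gtrsim y$ on $[2,\infty)$; Cauchy--Schwarz against $\abs{B_R}^{1/2}$ gives $O(R^{d/2})$ on the first region, and a globally bounded contribution on the second. This forces $R^{-d}\int\chi_R\abs{\phi}^2\diff x\to 1$, so the LHS divided by $R^d$ tends to $\om$. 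On the RHS, $\int\chi_R\abs{\nabla\phi}^2\diff x\le\|\chi\|_\infty\|\nabla\phi\|_{L^2}^2 = O(1)$; the cut-off correction is $O(R^{d/2-1})$ by Cauchy--Schwarz combined with $\int_{B_R}\abs{\phi}^2\diff x = O(R^d)$; the current term is analogously $O(R^{d/2})$; and splitting $\abs{\phi}^2\ln\abs{\phi}^2=F(\abs{\phi}^2)+(\abs{\phi}^2-1)$ yields $O(1)+O(R^{d/2})$ for the nonlinear term. All four are $o(R^d)$ in every dimension $d\ge 1$, forcing $\om=0$.

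The main technical point is the sublinear growth estimate $\int_{B_R}\abs{\abs{\phi}^2-1}\diff x = o(R^d)$, which encodes the quantitative information carried by finiteness of $\mathcal E_\textnormal{logGP}$ both near $\abs{\phi}=1$ (quadratic behavior of $F$) and for large $\abs{\phi}$ (super-linear behavior of $F$); once this is in hand, the rest is a direct application of integration by parts and the Cauchy--Schwarz inequality.
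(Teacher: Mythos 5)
Your argument is correct, and it takes a genuinely different route from the paper's. The paper exploits the time-dependent structure: by Lemma~\ref{lem:u_in_plusH1}, $u(t)-\phi\in L^2$ for all $t$, while the elementary bound $\norm{\phi(\cdot-ct)-\phi}_{L^2}\le \abs{t}\,\abs{c}\,\norm{\nabla\phi}_{L^2}$ gives $\phi(\cdot-ct)-\phi\in L^2$; hence $(e^{i\omega t}-1)\phi\in L^2$ for every $t$, and taking $t=\pi/\omega$ would force $\phi\in L^2(\R^d)$, contradicting $\abs{\phi}-1\in L^2$ from Lemma~\ref{lem:E_logGP_descr}. You instead argue purely on the stationary profile equation, averaging it against $\chi_R$ and isolating the leading $R^d$ contribution; the key quantitative input $\int_{B_R}\abs{\abs{\phi}^2-1}\diff x=O(R^{d/2})$ is correctly extracted from the finiteness of the potential energy via your two-region splitting ($F(y)\gtrsim (y-1)^2$ on $[0,2]$ and $F(y)\gtrsim y$ on $[2,\infty)$), and each remaining term is indeed $O(R^{d/2})+O(1)=o(R^d)$ in every dimension. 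One point of rigor: the ``pointwise identity'' obtained by multiplying by $\bar\phi$ should really be read as pairing the distributional profile equation with the compactly supported $H^1$ function $\chi_R\bar\phi$, since $\bar\phi\,\Delta\phi$ is not a priori defined pointwise; the pairing $\<\Delta\phi,\chi_R\phi\>_{H^{-1},H^1}$ is exactly what licenses your integration by parts, so this is presentational rather than substantive. What each approach buys: the paper's proof is shorter but leans on the Cauchy theory of Section~\ref{sec:regularity} (hence on Corollary~\ref{cor:v_ln_v_Lp}), whereas yours is self-contained at the level of the elliptic equation and would survive replacing the logarithm by any nonlinearity whose nonnegative potential is quadratic near $\abs{u}=1$ and at least linearly growing at infinity.
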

This conclusion meets the one from \cite{Fa03}, a case where however
the assumptions on nonlinearities other than cubic are not written.
It is not  obvious that the logarithmic case is easily inferred,
as \cite{Fa98} (which is central in the argument of \cite{Fa03})
considers only polynomials nonlinearities. 
\color{black}
We now focus on the one-dimensional case. First, like for
\eqref{eq:GP}, if $|c|$ is too large, then there is no non-constant
traveling wave:
\begin{theorem} \label{th:trav_waves_1d}
    Let $d = 1$ and $c^2 \ge 2 \lambda$. Any solution to
    \eqref{logGP} of the form \eqref{eq:trav_wave} is  constant.
\end{theorem}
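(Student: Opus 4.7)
The plan is to reduce to an ODE on $\R$ and then to a sign study of a single real-valued function. By Theorem \ref{th:value_omega}, $\omega = 0$, so $u(t,x) = \phi(x - ct)$ and \eqref{logGP} becomes the ODE
\begin{equation*}
 \phi'' - ic\phi' = \lambda\phi\ln|\phi|^2 \quad \text{on } \R.
\end{equation*}
A bootstrap using 1D Sobolev embedding and the equation itself upgrades $\phi$ to $C^2$, so the ODE holds pointwise.

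The key step is to extract two first integrals. Multiplying by $\overline{\phi'}$ and taking real parts gives $|\phi'|^2 = \lambda F(|\phi|^2) + K_1$ (with $F$ as in the introduction), while multiplying by $\overline{\phi}$ and taking imaginary parts yields $\Im(\phi'\overline{\phi}) = \frac{c}{2}|\phi|^2 + K_2$. The boundary condition $|\phi|\to 1$ at infinity, together with $\phi' \in L^2(\R)$ (which rules out a nonzero limit of $|\phi'|$), forces $K_1 = 0$ and $K_2 = -c/2$. Setting $r = |\phi|^2$ and using the identity $|\phi'|^2|\phi|^2 = (\Re(\phi'\overline{\phi}))^2 + (\Im(\phi'\overline{\phi}))^2 = (r'/2)^2 + \frac{c^2}{4}(r-1)^2$, these merge into
\begin{equation*}
 (r')^2 = 4\lambda r F(r) - c^2(r-1)^2 =: H(r).
\end{equation*}
As a byproduct, $\phi$ cannot vanish: $r(x_0) = 0$ would give $-c^2 \geq 0$, impossible since $c^2 \geq 2\lambda > 0$.

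The remaining work is the sign analysis of $H$ on $[0, \infty)$. A straightforward calculation gives $H(1) = H'(1) = 0$ and $H''(r) = 8\lambda\ln r + 2(2\lambda - c^2)$, which is negative on $(0, 1]$ in our regime; combined with $H'(0) = 2(2\lambda + c^2) > 0$, this forces $H'$ positive on $(0,1)$, hence $H$ strictly increasing there from $H(0) = -c^2$ to $H(1) = 0$. Therefore $H < 0$ on $[0, 1)$, and the relation $(r')^2 = H(r)$ enforces $r \geq 1$ everywhere. Since $r \to 1$ at $\pm\infty$, the continuous function $r$ is bounded and attains its maximum at some $x^* \in \R$.

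The main obstacle is then to exclude $r(x^*) > 1$. If this held, $r'(x^*) = 0$ would give $H(r(x^*)) = 0$. In the borderline case $c^2 = 2\lambda$, one can compute $H'(r) = 8\lambda F(r) > 0$ on $(1, \infty)$, so $H > 0$ there, an immediate contradiction. In the strict case $c^2 > 2\lambda$, $H''(1) < 0$ forces $H < 0$ just to the right of $1$; since $H(r) \to +\infty$ as $r \to \infty$, there is a first zero $r^* > 1$ of $H$, with $H < 0$ on $(1, r^*)$. Because $r(x^*) \geq r^*$ while $r \to 1 < r^*$ at $+\infty$, the intermediate value theorem produces a point $x_0$ with $r(x_0) \in (1, r^*)$, contradicting $(r'(x_0))^2 = H(r(x_0)) < 0$. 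Thus $r \equiv 1$, and the second conservation law then forces the phase of $\phi$ to be constant. Hence $\phi$, and therefore $u$, is constant.
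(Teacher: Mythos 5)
Your proof is correct and follows essentially the same route as the paper: the two first integrals you derive are exactly those of Lemma~\ref{lem:another_energy_eq} (your $H(r)$ equals $2h_c(1-r)$ under the substitution $\eta=1-\abs{\phi}^2$), and your sign analysis of $H$, including the separate treatment of the borderline case $c^2=2\lambda$ via $H'=8\lambda F$ on $(1,\infty)$, mirrors the paper's study of $h_c$ in Corollary~\ref{cor:eta_zero_large_c} and the concluding phase argument. The differences (working with $r=\abs{\phi}^2$ rather than $\eta$, and organizing the sign discussion around excluding a maximum of $r$ above $1$ rather than a connectedness argument) are cosmetic.
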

We now come to the description of non-constant traveling waves:

\begin{theorem} \label{th:sol_wave}
    Assume $d=1$ and $c$ such that $c^2 < 2 \lambda$. There exists a
    unique non-constant traveling wave in the following sense: there
    exists a traveling wave $\phi_c$ such that any non-constant
    traveling wave $u$ of the form \eqref{eq:trav_wave} is such that 
    \begin{equation*}
        \phi = e^{i \theta} \phi_c (\cdot- x_0),
    \end{equation*}
    for some constants $\theta, x_0 \in \mathbb{R}$. If $c\not =0$,
    $\phi_c$ never vanishes. 
    In the case $c=0$,  $\phi_0$ can be taken real-valued and
    increasing, and then 
    \begin{equation*}
        \lim_{x \to \pm \infty} \phi_0 (x) = \pm 1.
    \end{equation*}
 \end{theorem}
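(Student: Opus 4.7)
The plan is to substitute~\eqref{eq:trav_wave} into~\eqref{logGP}, use Theorem~\ref{th:value_omega} to set $\omega=0$, and extract from the resulting autonomous ODE
\[ \phi'' - ic\phi' = \lambda \phi \ln|\phi|^2 \]
two first integrals. Multiplying the equation by $\overline{\phi'}$, taking real parts and integrating in $x$ yields
\[ |\phi'|^2 = \lambda F(|\phi|^2); \]
multiplying by $\overline\phi$, taking imaginary parts and integrating produces
\[ \operatorname{Im}\!\bigl(\phi'\overline\phi\bigr) = \tfrac{c}{2}\bigl(|\phi|^2-1\bigr), \]
the integration constants being fixed by the limits $|\phi(x)|\to 1$ and $\phi'(x)\to 0$ as $|x|\to\infty$ (to be extracted from $\phi\in E_{\textnormal{logGP}}$). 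I will refer to these below as the \emph{modulus} and \emph{phase} integrals.

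Assume first $c\neq 0$. Evaluating the phase integral at any putative zero $x_0$ of $\phi$ gives $0=-c/2$, a contradiction, so $\phi$ never vanishes and the Madelung decomposition $\phi=\rho e^{i\theta}$ holds globally. The phase integral then gives $\theta'=c(\rho^2-1)/(2\rho^2)$, and combined with $|\phi'|^2=(\rho')^2+\rho^2(\theta')^2$ and the modulus integral it produces
\[ (\rho')^2 = V(\rho) := \lambda F(\rho^2) - \frac{c^2(\rho^2-1)^2}{4\rho^2} = \frac{(\rho^2-1)^2}{4\rho^2}\bigl(g(\rho^2)-c^2\bigr), \]
where $g(y) := 4\lambda yF(y)/(y-1)^2$, extended continuously by $g(1)=2\lambda$. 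A direct computation gives
\[ g'(y) = \frac{4\lambda\,(y^2-1-2y\ln y)}{(y-1)^3}, \]
and with $h(y):=y^2-1-2y\ln y$ satisfying $h(1)=0$ and $h'(y)=2(y-1-\ln y)\ge 0$, numerator and denominator share a sign, so $g'>0$ on $(0,\infty)$. Combined with $g(0^+)=0$ and $g(+\infty)=+\infty$, the hypothesis $c^2<2\lambda$ produces a unique $y_c\in(0,1)$ with $g(y_c)=c^2$, hence a unique $\rho_\ast=\sqrt{y_c}\in(0,1)$ with $V(\rho_\ast)=0$; moreover $V(1)=V'(1)=0$ and $V''(1)=2(2\lambda-c^2)>0$. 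Thus $\rho=1$ is a saddle of $\rho''=\tfrac12 V'(\rho)$, and the only nonconstant bounded orbit is the homoclinic loop with range $[\rho_\ast,1]$ (the separatrix into $\rho>1$ escapes to $+\infty$ and is incompatible with $\phi\in E_{\textnormal{logGP}}$). Separation of variables gives a profile $\rho_c$ unique up to translation; $\theta_c$ then follows by integration of $\theta'$, unique up to an additive constant; and $\phi_c=\rho_c e^{i\theta_c}$ provides the desired traveling wave, which never vanishes.

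For $c=0$, the phase integral reduces to $\operatorname{Im}(\phi'\overline\phi)\equiv 0$, so $\arg\phi$ is locally constant on $\{\phi\neq 0\}$. At any zero $x_0$ the modulus integral forces $|\phi'(x_0)|^2=\lambda F(0)=\lambda\neq 0$, so zeros are isolated and a matching of the one-sided phases (forced by the $\mathcal C^1$-regularity of $\phi$) produces a jump of $\pi$ at each zero; altogether, $\phi=e^{i\theta_0}\psi$ globally for a constant $\theta_0\in\R$ and a real-valued $\psi\in\mathcal C^2(\R)$. The reduced ODE $\psi''=\lambda\psi\ln\psi^2$ preserves $(\psi')^2=\lambda F(\psi^2)$; since $F\ge 0$ vanishes only at $y=1$ while $F(0)=1>0$, the $(\psi,\psi')$-phase portrait has saddles at $(\pm 1,0)$ joined by a unique heteroclinic connection which, oriented by $\psi'>0$, yields the real-valued, increasing profile $\phi_0$ with $\phi_0(x)\to\pm 1$ as $x\to\pm\infty$.

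The main technical obstacle I anticipate lies not in the phase-plane analysis but in justifying the pointwise limits $|\phi(x)|\to 1$ and $\phi'(x)\to 0$ as $|x|\to\infty$ directly from $\phi\in E_{\textnormal{logGP}}$: these are crucial both to fix the integration constants in the two first integrals and to identify the correct equilibria of the reduced scalar ODE. The argument should combine the $L^1$-integrability of $F(|\phi|^2)$ (which traps $|\phi|^2$ near $1$ in an averaged sense) and of $|\phi'|^2$ with the uniform continuity of $\phi$ and $\phi'$ deduced from the ODE, using an $L^\infty$-bound on $\phi$ to tame the logarithmic nonlinearity.
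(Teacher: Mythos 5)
Your proposal is correct and follows essentially the same route as the paper: your ``modulus'' and ``phase'' integrals are \eqref{eq:energy_ode}/\eqref{eq:energy_ode_c_compl} and \eqref{eq:wronskien}, the non-vanishing for $c\neq 0$ is Corollary~\ref{cor:trav_wave_never_vanish}, and the reduction to a scalar ODE for $\rho$ with a unique turning point ($y_0=\sqrt{y_c}$ in your notation) followed by a uniqueness-up-to-translation argument is Lemmas~\ref{lem:carac_trav_waves}--\ref{lem:exist_trav_waves_c} and, for $c=0$, Section~\ref{sec:solitary} (your monotone function $g$ is a slightly slicker substitute for the variation table of Lemma~\ref{lem:analysis_fcts}, and your phase-jump-at-zeros argument shortcuts the interval-by-interval polar decomposition of Lemmas~\ref{lem:infinite_int}--\ref{lem:inifnite_int_2}). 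The one ingredient you defer --- $\abs{\phi(x)}\to 1$ and $\phi'(x)\to 0$ as $\abs{x}\to\infty$ --- is exactly Lemma~\ref{lem:solit_reg_lim}, which the paper obtains more directly than the averaging strategy you sketch: $\phi\ln\abs{\phi}^2\in L^2$ by Corollary~\ref{cor:v_ln_v_Lp}, so the ODE gives $\phi''\in L^2$, hence $\phi'\in H^1(\R)$ and $\phi'(x)\to 0$ by Sobolev embedding, while $\abs{\phi}-1\in H^1(\R)$ by Lemma~\ref{lem:E_logGP_descr} gives $\abs{\phi(x)}\to 1$.
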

We emphasize that qualitatively, we obtain the same properties as the
black soliton, even though it is probably hopeless to get an explicit
expression in the case of \eqref{logGP}.  
\subsection{Content and notations}

In Section~\ref{sec:prelim}, we reduce the study of \eqref{logGP}
in the energy space, by analyzing the potential energy and
characterizing the energy space $E_\textnormal{logGP}$. In
Section~\ref{sec:regularity}, we  prove that any solution in the
energy space is unique, and that it is a mild solution. The heart of Theorem~\ref{th:Cauchy_th_main} is established in
Section~\ref{sec:construction}, where a weak solution is constructed
by an approximation procedure. In Section~\ref{sec:higher}, we show
that $H^2$ regularity is propagated by the flow, completing the proof
of Theorem~\ref{th:Cauchy_th_main}. We prove general results regarding
 solitary and
traveling waves in Section~\ref{sec:dynamics}, in particular
Theorem~\ref{th:value_omega}. In Section~\ref{sec:solitary}, we
analyze stationary waves in the one-dimensional case
(Theorem~\ref{th:sol_wave} in the case $c=0$). One-dimensional traveling waves are
studied in Section~\ref{sec:traveling}, where we complete the proof of
Theorems~\ref{th:sol_wave} and \ref{th:trav_waves_1d}. 
We
conclude this paper by open questions in Section~\ref{sec:open}.
\smallbreak

We define the $L^2$-bracket by
\begin{equation*}
    \langle f, g \rangle = \int_{\R^d} f \overline{g} \diff x.
\end{equation*}
For $k$ an integer, we denote by  $\mathcal{C}^k$ the class of $k$ times
continuously differentiable functions, and if $0<s<1$, by
$\mathcal{C}^{k,s}$ the class of $k$ times
continuously differentiable functions whose derivative of order $k$ is
H\"older continuous with exponent $s$. If $s=1$, $\mathcal{C}^{k,1}$ is
the class of $k$ times 
continuously differentiable functions whose derivative of order $k$ is
Lipschitzian.

%%%%%%%%%%%%%%%%%%%%%%%%%%%%%%%%%%%%%%%%%%%%%%%%%%%%%%%%%%%%%%%%%%%%%%%

\section{Preliminary results}
\label{sec:prelim}

\subsection{Generalities}

First, we recall some properties about the logarithm.
The first property was discovered in \cite{cazenave-haraux}, and is
crucial for uniqueness issues related to \eqref{logGP}. 
\begin{lem}[{\cite[Lemma~1.1.1]{cazenave-haraux}}] \label{lem:log_inequality}
There holds

\begin{equation*}
    \abs{\Im \left( (z_2 \ln \abs{z_2}^2 - z_1 \ln \abs{z_1}^2) (\overline{z_2} - \overline{z_1}) \right)} \leq 2 \abs{z_2 - z_1}^2, \qquad \forall z_1, z_2 \in \mathbb{C}.
\end{equation*}
\end{lem}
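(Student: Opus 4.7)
The plan is to reduce the inequality, via an algebraic identity for the imaginary part and a passage to polar coordinates, to a one-variable calculus fact about the logarithm.

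First I would expand the product, exploiting that $\ln\abs{z}^2 \in \R$: the terms $z_j\overline{z_j}\ln\abs{z_j}^2 = \abs{z_j}^2\ln\abs{z_j}^2$ for $j=1,2$ are real and drop out of the imaginary part. Using $\Im(z_1\overline{z_2})=-\Im(z_2\overline{z_1})$, what remains is
\[
\Im\bigl((z_2\ln\abs{z_2}^2-z_1\ln\abs{z_1}^2)(\overline{z_2}-\overline{z_1})\bigr)=\bigl(\ln\abs{z_1}^2-\ln\abs{z_2}^2\bigr)\,\Im(z_2\overline{z_1}).
\]
The cases $z_1=0$ or $z_2=0$ are trivial using the convention $z\ln\abs{z}^2\to 0$ as $z\to 0$, so from here I assume $z_1,z_2\neq 0$.

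Setting $a=\abs{z_1}$, $b=\abs{z_2}$, $\theta=\arg z_2-\arg z_1$, and WLOG $b\ge a$ and $\sin\theta\ge 0$ (the other cases follow by swapping $z_1,z_2$ or replacing $\theta$ by $-\theta$), the inequality to prove becomes
\[
2ab\sin\theta\,\ln(b/a)\ \le\ 2(a^2+b^2-2ab\cos\theta)=2(b-a)^2+4ab(1-\cos\theta).
\]
Using $\sin^2\theta=(1-\cos\theta)(1+\cos\theta)\le 2(1-\cos\theta)$, the right-hand side is bounded below by $2(b-a)^2+2ab\sin^2\theta$, so it suffices to show
\[
ab\,s^2-ab\ln(b/a)\,s+(b-a)^2\ge 0\qquad\text{for }s=\sin\theta.
\]
Viewed as a quadratic in $s$ with positive leading coefficient, nonnegativity on $\R$ is equivalent to nonpositivity of the discriminant, namely
\[
\sqrt{ab}\,\abs{\ln(b/a)}\ \le\ 2\abs{b-a}.
\]

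The scalar inequality above is the heart of the argument and the step I expect to require the most care. Substituting $u=\sqrt{b/a}\ge 1$ turns it into $u\ln u\le u^2-1$, which follows from the elementary bound $\ln u\le u-1$ (valid for all $u>0$) multiplied by $u\ge 1$:
\[
u\ln u\ \le\ u(u-1)\ \le\ (u+1)(u-1)\ =\ u^2-1.
\]
This completes the reduction and hence the proof.
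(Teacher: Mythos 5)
Your argument is correct. Note that the paper itself gives no proof of this lemma: it is quoted directly from Cazenave--Haraux, so there is no in-text argument to compare against. Your computation of the imaginary part, $(\ln\abs{z_1}^2-\ln\abs{z_2}^2)\,\Im(z_2\overline{z_1})$, the reduction to polar coordinates, the discriminant analysis of the quadratic in $s=\sin\theta$, and the final scalar bound $u\ln u\le u^2-1$ for $u\ge 1$ all check out (and proving nonnegativity of the quadratic on all of $\R$ rather than just on $[0,1]$ is harmless, since it is a stronger statement). For comparison, the classical proof in Cazenave--Haraux is shorter and avoids trigonometry entirely: starting from the same identity, one assumes without loss of generality $\abs{z_1}\le\abs{z_2}$ (the case $z_1=0$ being trivial) and bounds the two factors separately, namely $\abs{\Im(z_2\overline{z_1})}=\abs{\Im\left((z_2-z_1)\overline{z_1}\right)}\le\abs{z_1}\,\abs{z_2-z_1}$ and $\abs{\ln\abs{z_2}^2-\ln\abs{z_1}^2}\le 2\,\frac{\abs{\abs{z_2}-\abs{z_1}}}{\abs{z_1}}\le 2\,\frac{\abs{z_2-z_1}}{\abs{z_1}}$ by the mean value theorem; multiplying gives the bound $2\abs{z_2-z_1}^2$ at once. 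Your route trades this two-line estimate for a sharper pointwise analysis (the discriminant step), which is more work but equally valid and entirely self-contained.
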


The next lemma measures the continuity of the nonlinearity in \eqref{logGP}:

\begin{lem} \label{lem:prop_lip_log}
    For all $\varepsilon \in (0, 1)$ and $x, y \in \mathbb{C}$,
    \begin{equation*}
        \abs{x \ln{\abs{x}^2} - y \ln{\abs{y}^2}} \leq C \Bigl( \abs{x}^\varepsilon \abs{\ln{\abs{x}}} + \abs{y}^\varepsilon \abs{\ln{\abs{y}}} \Bigr) \abs{x - y}^{1 - \varepsilon} + 2 \abs{x - y}.
    \end{equation*}
\end{lem}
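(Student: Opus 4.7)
The plan is to perform a case split based on the relative size of $|x-y|$ and $|x|$. By the symmetry of the right-hand side in $(x,y)$, we may assume $|x|\ge |y|$ throughout.

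In the \emph{large} regime $|x-y|\ge |x|$, both $|x|$ and $|y|$ are at most $|x-y|$, so the crude estimate
\begin{equation*}
2|x|\,|\ln|x|| \;=\; 2|x|^{\varepsilon}|\ln|x|| \cdot |x|^{1-\varepsilon} \;\le\; 2|x|^{\varepsilon}|\ln|x|| \cdot |x-y|^{1-\varepsilon},
\end{equation*}
together with its analogue in $y$, gives the desired inequality at once via the triangle inequality $|x\ln|x|^2 - y\ln|y|^2|\le 2|x||\ln|x|| + 2|y||\ln|y||$; in this regime the $2|x-y|$ term is not even needed.

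In the \emph{small} regime $|x-y|<|x|$ (assume first $y\ne 0$), the heart of the argument is the algebraic identity
\begin{equation*}
x\ln|x|^2 - y\ln|y|^2 \;=\; (x-y)\ln|x|^2 \;+\; 2y\ln(|x|/|y|),
\end{equation*}
which cleanly isolates the potentially singular contribution at $y$. The first piece is bounded by $2|x-y|\,|\ln|x|| \le 2|x|^{\varepsilon}|\ln|x|| \cdot |x-y|^{1-\varepsilon}$, using $|x-y|^{\varepsilon}\le |x|^{\varepsilon}$ from the case assumption. For the second piece, the elementary inequality $\ln t \le t-1$ for $t\ge 1$, applied to $t = |x|/|y|\ge 1$, yields
\begin{equation*}
|2y\ln(|x|/|y|)| \;=\; 2|y|\ln(|x|/|y|) \;\le\; 2|y|\cdot\frac{|x|-|y|}{|y|} \;=\; 2(|x|-|y|) \;\le\; 2|x-y|,
\end{equation*}
producing exactly the $2|x-y|$ contribution in the statement. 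The degenerate case $y=0$ (where the above identity is ill-defined) is subsumed by the large regime since then $|x-y|=|x|$.

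The main step — minor but central — is recognizing that the singularity of the map $z\mapsto z\ln|z|^2$ at the origin is tamed by telescoping the difference into a $(x-y)\ln|x|^2$ part plus $2y\ln(|x|/|y|)$, after which $\ln t \le t-1$ converts the second piece to a purely Lipschitz bound. Everything else reduces to $|x-y|^{\varepsilon}\le |x|^{\varepsilon}$ (small regime) or $|x|^{1-\varepsilon}\le |x-y|^{1-\varepsilon}$ (large regime), yielding the lemma with explicit constant $C=2$.
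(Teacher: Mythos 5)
Your proof is correct and follows essentially the same route as the paper: both telescope the difference so that the smaller-modulus variable multiplies the difference of logarithms (controlled by a Lipschitz bound on $\ln$ away from the smaller modulus), while the $(x-y)$ factor lands on the logarithm of the larger modulus and a fraction $\abs{x-y}^{\varepsilon}$ is converted into the modulus to the power $\varepsilon$. The only cosmetic difference is that the paper dispenses with your case split by using $\abs{x-y}\le 2\max(\abs{x},\abs{y})$ directly, at the cost of the slightly larger constant $2^{1+\varepsilon}$.
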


\begin{proof}
    Let $x, y \in \mathbb{C}$ and assume without loss of generality that $\abs{x} \leq \abs{y}$. We also assume $x \neq 0$, otherwise the estimate is obvious.
    Then, we have
    \begin{equation*}
        \abs{x \ln{\abs{x}^2} - x \ln{\abs{y}^2}} = 2 \abs{x} \abs{\ln{\abs{x}} - \ln{\abs{y}}} 
            \leq 2 \abs{x} \frac{\abs{\abs{x} - \abs{y}}}{\abs{x}} \leq 2 \abs{x - y}.
    \end{equation*}
    On the other hand,
    \begin{equation*}
        \abs{x \ln{\abs{y}^2} - y \ln{\abs{y}^2}} \leq 2 \abs{x - y} \abs{\ln{\abs{y}}} \leq 2^{1+\varepsilon} \abs{x - y}^{1 - \varepsilon} \abs{y}^\varepsilon \abs{\ln{\abs{y}}},
    \end{equation*}
    hence the conclusion.
\end{proof}

Then, we also recall a result about norms on finite dimensional subspace of $L^2$.

\begin{lem} \label{lem:equiv_norm_finite_dim}
    If $\{ f_k \}_{0\leq k \leq n}$ is a finite family of $L^2$ linearly
    independent functions, then there exists $C > 0$ such that, for all $\lambda_k \in \mathbb{C}$,
    \begin{equation*}
        \max_{0\leq k \leq n} \abs{\lambda_k} \leq C \norm{\sum_{k = 0}^n \lambda_k f_k}_{L^2}.
    \end{equation*}
\end{lem}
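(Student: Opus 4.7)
The plan is to use the classical fact that on a finite-dimensional vector space all norms are equivalent, applied to the subspace $V := \operatorname{span}(f_0,\dots,f_n) \subset L^2$. Since the $f_k$ are linearly independent, the linear map $T \colon \mathbb{C}^{n+1} \to V$ defined by $T(\lambda_0,\dots,\lambda_n) = \sum_{k=0}^n \lambda_k f_k$ is a bijection, so the expressions $\max_k |\lambda_k|$ and $\|\sum_k \lambda_k f_k\|_{L^2}$ both define norms on $\mathbb{C}^{n+1}$ via $T$.

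A concrete way to produce the constant $C$ is to argue by contradiction and compactness. Suppose no such $C$ exists. Then for each $m\in\mathbb{N}$ there are scalars $\lambda_0^{(m)},\dots,\lambda_n^{(m)}$ with $\max_k |\lambda_k^{(m)}| = 1$ but $\|\sum_k \lambda_k^{(m)} f_k\|_{L^2} \le 1/m$. Since the unit sphere of $(\mathbb{C}^{n+1},\|\cdot\|_\infty)$ is compact, a subsequence converges to some $(\lambda_0^\infty,\dots,\lambda_n^\infty)$ with $\max_k |\lambda_k^\infty| = 1$. Passing to the limit in $L^2$ (using continuity of the finite linear combination with respect to coefficients, via the triangle inequality and the fixed norms $\|f_k\|_{L^2}$), one gets $\sum_k \lambda_k^\infty f_k = 0$ in $L^2$, contradicting linear independence of $\{f_k\}$.

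There is no real obstacle here; the only mild point to keep track of is that convergence of coefficients in $\mathbb{C}^{n+1}$ implies convergence in $L^2$ of the linear combinations, which is immediate from $\|\sum_k \mu_k f_k\|_{L^2} \le (\sum_k \|f_k\|_{L^2}) \max_k |\mu_k|$. This inequality, combined with the argument above, gives equivalence of the two norms on $V$, and the lemma follows.
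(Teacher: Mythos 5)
Your proof is correct and is essentially the paper's argument: both reduce the claim to the equivalence of all norms on the finite-dimensional space $\mathbb{C}^{n+1}$, noting that linear independence makes $(\lambda_k)\mapsto\|\sum_k\lambda_k f_k\|_{L^2}$ a norm. You merely unpack the standard compactness proof of that equivalence, which the paper simply cites.
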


\begin{proof}
  As the functions $\{ f_k \}_{0\leq k \leq n}$ are linearly independent, the map
 $(\lambda_k)_{0\leq k \leq n} \mapsto \left\|\sum_{k = 0}^n \lambda_k f_k\right\|_{L^2}$ defines
 a norm on $\C^{n+1}$. The conclusion readily follows, since the left
 hand side of the conclusion is the $\ell^\infty$ norm on
 $\C^{n+1}$, and all norms are
 equivalent on finite dimensional spaces. 
\end{proof}

\subsection{Potential energy}

We start by a result on the potential energy $\mathcal{E}_{\textnormal{\textnormal{pot}}}$, given by
\begin{equation*}
    \mathcal{E}_{\textnormal{\textnormal{pot}}} (v) \coloneqq \int_{\R^d} \Bigl( \abs{v}^2 \ln{\abs{v}^2} - \abs{v}^2 + 1 \Bigr) \diff x.
\end{equation*}
Let
\begin{equation*}
    E_\textnormal{\textnormal{pot}} (v) \coloneqq \int_{\R^d}  ( \abs{v} - 1 )^2 \ln{(2 + \abs{v})} \diff x.
\end{equation*}
We know that both previous quantities are non-negative. The following result shows that they are actually equivalent.

\begin{lem} \label{lem:energy_equiv}
    There exists $K_0 > 0$ such that for all $v \in E_\textnormal{logGP}$,
    \begin{equation*}
        \frac{1}{K_0} E_\textnormal{pot} (v) \leq \mathcal{E}_\textnormal{pot} (v) \leq K_0 E_\textnormal{pot} (v).
    \end{equation*}
\end{lem}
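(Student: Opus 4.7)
The plan is to reduce the inequality to a pointwise comparison of the integrands and then bound the ratio of the two density functions. Writing $y=|v(x)|\ge 0$, the integrand of $\mathcal{E}_\textnormal{pot}$ is $F(y^2)$ with $F(t)=t\ln t-t+1$ ($F(0)=1$), and the integrand of $E_\textnormal{pot}$ is $(y-1)^2\ln(2+y)$. Since both are non-negative, it is enough to exhibit a constant $K_0>0$ such that
\[
\frac{1}{K_0}(y-1)^2\ln(2+y)\le F(y^2)\le K_0\,(y-1)^2\ln(2+y),\qquad\forall y\ge 0,
\]
and then integrate in $x$.

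To prove this pointwise two-sided bound, I set
\[
h(y):=\frac{F(y^2)}{(y-1)^2\ln(2+y)},\qquad y\in[0,\infty)\setminus\{1\},
\]
and argue that $h$ extends to a continuous, strictly positive function on $[0,\infty)$ with finite positive limit at infinity. The function $F$ satisfies $F(1)=F'(1)=0$ and $F''(1)=1$, so $F(t)=\tfrac12(t-1)^2+O((t-1)^3)$ as $t\to1$. Substituting $t=y^2=((y-1)+1)^2$ gives $F(y^2)=2(y-1)^2+O((y-1)^3)$ near $y=1$, hence $h$ extends continuously to $y=1$ with $h(1)=2/\ln 3>0$. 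Away from $y=1$, both numerator and denominator are strictly positive (using $F>0$ on $[0,\infty)\setminus\{1\}$), so $h$ is continuous and positive on $[0,\infty)$; in particular $h(0)=1/\ln 2$.

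Finally, I must check the behavior as $y\to\infty$: the leading term of $F(y^2)$ is $2y^2\ln y$, while $(y-1)^2\ln(2+y)\sim y^2\ln y$, so $h(y)\to 2$. Consequently $h$ is bounded above and bounded below by positive constants on $[0,\infty)$, and choosing $K_0$ as the larger of $\sup h$ and $(\inf h)^{-1}$ yields the pointwise inequality, which gives the lemma after integration.

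The only delicate point is the indeterminate form at $y=1$, and it is handled cleanly by the Taylor expansion of $F$ at $t=1$; no other step requires subtle analysis, since the continuous function $h$ on the compactification $[0,\infty]$ (with the computed limit at infinity) is automatically bounded and bounded away from zero.
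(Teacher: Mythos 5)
Your proof is correct, and at its core it takes the same route as the paper: both arguments reduce the lemma to the pointwise two-sided bound $F(y^2)\asymp (y-1)^2\ln(2+y)$ for $y=|v(x)|\ge 0$ and then integrate. The difference lies only in how this one-variable inequality is verified. The paper writes $y^2\ln y^2-y^2+1 = 4(y-1)^2\int_0^1\bigl(1+\ln(1-s+sy)\bigr)(1-s)\,ds$ (Taylor's formula with integral remainder at $y=1$) and then bounds the integral factor above and below by constant multiples of $\ln(2+y)$, distinguishing $y<1$ from $y>1$; this is fully explicit. You instead study the ratio $h(y)=F(y^2)/\bigl((y-1)^2\ln(2+y)\bigr)$ and show it extends to a continuous, strictly positive function on the compactified half-line $[0,\infty]$, with the removable singularity at $y=1$ resolved by the second-order Taylor expansion of $F$ (giving $h(1)=2/\ln 3$) and with limit $2$ at infinity, so $h$ is automatically bounded and bounded away from zero. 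Your version is softer and less quantitative (it produces no explicit $K_0$) but avoids the case analysis and the integral identity; all the individual steps (positivity of $F$ off $t=1$, the expansion $F(y^2)=2(y-1)^2+o((y-1)^2)$, the asymptotics $F(y^2)\sim 2y^2\ln y$) check out.
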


\begin{proof}
Taylor formula yields, for $y\ge 0$,
\begin{equation*}
  y^2\ln y^2-y^2+1 = 4 \(y-1\)^2\int_0^1
  \(1+\ln\(1-s+sy\)\)(1-s)ds.
\end{equation*}
Distinguishing $y<1$ and $y>1$, we get:
\begin{equation*}
 \ln \(2+y\)\lesssim \int_0^1\(1+\ln\(1-s+sy\)\)(1-s)ds\lesssim
 \ln \(2+y\),
\end{equation*}
and the result follows, by considering $y=|v|$.
 \end{proof}

This second functional $E_\textnormal{pot}$ appears to be more
convenient than $\mathcal{E}_\textnormal{pot}$. For instance, since
$\ln{2} \leq \ln{(2 + \abs{x})} \leq \ln{2} + C_\delta \abs{\abs{x} -
  1}^\delta$ for all $\delta > 0$, we can relate $E_\textnormal{pot}$
to the $L^2$-norm of $\abs{v} - 1$, but also to any $L^p$-norm, for
$p > 2$ as follows:

\begin{lem} \label{lem:E_pot_Lp}
    Let $p>2$. There exists $C_p$ such that for all $v \in E_\textnormal{logGP}$, 
    \begin{equation*}
        \ln{(2)} \, \norm{\abs{v} - 1}_{L^2}^2 \leq E_\textnormal{pot} (v) \leq \ln{(2)} \, \norm{\abs{v} - 1}_{L^2}^2 + C_p \norm{\abs{v} - 1}_{L^p}^p.
    \end{equation*}
      \end{lem}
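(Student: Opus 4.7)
The plan is to establish both inequalities pointwise for $y = \abs{v(x)}$ on $[0,\infty)$ and then integrate over $\R^d$.

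The lower bound is immediate: since $\ln(2+y) \ge \ln 2$ for every $y \ge 0$, one has $(y-1)^2 \ln(2+y) \ge \ln(2)\,(y-1)^2$ pointwise, and integration yields the claim.

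For the upper bound, I would first split off the leading $L^2$-piece by writing $\ln(2+\abs{v}) = \ln 2 + \ln(1+\abs{v}/2)$, which gives
\[
E_\textnormal{pot}(v) = \ln(2)\,\norm{\abs{v}-1}_{L^2}^2 + \int_{\R^d} (\abs{v}-1)^2 \ln(1+\abs{v}/2)\,\diff x,
\]
thereby reducing the task to bounding the remainder integral by $C_p\,\norm{\abs{v}-1}_{L^p}^p$. Following the pointwise strategy announced in the paragraph preceding the lemma, I would split $\R^d$ into $A = \{|\abs{v}-1| \le 1\}$ and $B = \{|\abs{v}-1| > 1\}$. On $B$, one has $\abs{v} \le 1 + |\abs{v}-1| \le 2|\abs{v}-1|$, so $\ln(1+\abs{v}/2) \le C_p |\abs{v}-1|^{p-2}$, since the logarithm grows more slowly than any positive power on $[1,\infty)$; hence $(\abs{v}-1)^2 \ln(1+\abs{v}/2) \le C_p |\abs{v}-1|^p$ on $B$, which integrates to the required $L^p$ contribution.

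The main obstacle is the region $A$, where $|\abs{v}-1|^p \le (\abs{v}-1)^2$ prevents direct $L^p$ domination. There, the sharp pointwise inequality $\ln(1+y/2) \le y/2$ together with $\abs{v} \le 2$ gives $(\abs{v}-1)^2 \ln(1+\abs{v}/2) \le (\abs{v}-1)^2$, a contribution of order $\norm{\abs{v}-1}_{L^2}^2$ which has to be recombined with the leading $\ln(2)\,\norm{\abs{v}-1}_{L^2}^2$ piece. Tracking the constants carefully so that the coefficient of the $L^2$-norm stays exactly $\ln 2$ is the delicate step of the proof, and is precisely the reason for splitting the logarithm at $\ln 2$ at the outset rather than at some larger constant.
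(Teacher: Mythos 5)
Your lower bound and your treatment of the region $\{\abs{\abs{v}-1}>1\}$ are correct, and you are following essentially the same route as the paper, whose entire proof is the pointwise inequality $\ln 2\le\ln(2+\abs{x})\le\ln 2+C_\delta\abs{\abs{x}-1}^\delta$ stated in the sentence preceding the lemma, integrated against $(\abs{v}-1)^2$. The problem is the step you defer to ``tracking the constants carefully'': it cannot be carried out, because the upper bound with coefficient exactly $\ln 2$ in front of the $L^2$ norm is false. Take $v=1+\eps\eta$ with $\eta\in\mathcal C_0^\infty(\R^d)$, $0\le\eta\le1$, $\eta\not\equiv 0$; then $v\in E_\textnormal{logGP}$ and
\begin{equation*}
E_\textnormal{pot}(v)=\int_{\R^d}\eps^2\eta^2\ln(3+\eps\eta)\diff x\ \ge\ \ln(3)\,\eps^2\norm{\eta}_{L^2}^2,
\end{equation*}
while the claimed right-hand side equals $\ln(2)\,\eps^2\norm{\eta}_{L^2}^2+C_p\,\eps^p\norm{\eta}_{L^p}^p$, which is strictly smaller once $\eps$ is small, since $\ln 3>\ln 2$ and $p>2$. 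The same defect already sits in the paper's pointwise inequality at $\abs{x}=1$, where the middle term is $\ln 3$ and the right-hand side is $\ln 2$. So the obstruction you identified in the region $A$ is real: this is an erratum in the paper, not a missing idea on your part.

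The statement (and your proof) is repaired by replacing $\ln 2$ with any constant at least $\ln 4$ in the upper bound: on $\{\abs{\abs{v}-1}\le 1\}$ one has $\abs{v}\le 2$, hence $\ln(2+\abs{v})\le\ln 4$ and the integrand is at most $\ln(4)(\abs{v}-1)^2$, while on $\{\abs{\abs{v}-1}>1\}$ your argument yields the $C_p\abs{\abs{v}-1}^p$ bound. (Even $\ln 3$ does not suffice for every $p$: near $\abs{v}=1^+$ the excess $\ln(2+\abs{v})-\ln 3\sim(\abs{v}-1)/3$ is linear in $\abs{v}-1$, so it can be absorbed into $C_p\abs{\abs{v}-1}^{p-2}$ only when $p\le 3$.) None of this propagates further into the paper, since the lemma is only ever invoked through the two-sided equivalence $\norm{\abs{v}-1}_{L^2}^2\lesssim E_\textnormal{pot}(v)\lesssim\norm{\abs{v}-1}_{L^2}^2+\norm{\abs{v}-1}_{L^p}^p$ with unspecified constants.
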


%Last, we state a differential result about the potential
%energy, which follows from standard arguments, that may be found in
%\cite{Cazenave_semlin_lognls} ([Proposition~1.5.2, proof of
%Theorem~3.3.5. Recall that the function $F$ is given on $(0,\infty)$
%by $F(y)=y\ln y-y+1$. 

%\begin{lem} \label{lem:deriv_pot_en_regu}
%    Let $I$ be an open interval. If $u \in u_0 + \mathcal{C}^0 (I,
%    L^2) \cap L^\infty_\textnormal{loc} (I, H^1) \cap \mathcal{C}^1 (I,
%    H^{-1})$ and $\varepsilon > 0$, then $\int_{\R^d} (1 - \varepsilon
%    \Delta)^{-2} F(\abs{u (t)}^2) \diff x$ is well defined for all $t
%    \in I$ and 
 %   \begin{equation*}
 %       \frac{\diff}{\diff t} \int_{\R^d} (1 - \varepsilon \Delta)^{-2} F(\abs{u (t)}^2) \diff x = 2 \< (1 - \varepsilon \Delta)^{-1} \Bigl( u(s) \ln{\abs{u (t)}^2} \Bigr), (1 - \varepsilon \Delta)^{-1} \partial_t u (t) \>.
%    \end{equation*}
    %
%    Moreover, for all $t \in I$,
%    \begin{equation*}
%        \int_{\R^d} (1 - \varepsilon \Delta)^{-2} F(\abs{u (t)}^2) \diff x \underset{\varepsilon \rightarrow 0}{\longrightarrow} \mathcal{E}_\textnormal{pot} (u(t)).
%    \end{equation*}
%\end{lem}

\subsection{Energy space}

As for the energy space, Lemma~\ref{lem:energy_equiv}  leads to
\begin{equation*}
    E_\textnormal{logGP} = \{ v \in H^1_\textnormal{loc} (\mathbb{R}^d) \, | \, \nabla v \in L^2(\R^d) \, \textnormal{and} \, E_\textnormal{pot} (v) < \infty \}.
\end{equation*}
We prove an even more explicit description of $E_\textnormal{logGP}$:

\begin{lem} \label{lem:E_logGP_descr}
The energy space is characterized by:
\begin{align*}
  E_\textnormal{logGP} &= \{ v \in H^1_\textnormal{loc} (\R^d)\, | \, \nabla
  v \in L^2(\R^d) \, \textnormal{and} \, \abs{v} - 1 \in L^2 (\R^d)\} \\
&= \{ v \in
  H^1_\textnormal{loc} (\R^d)\, | \, \nabla v \in L^2(\R^d) \, \textnormal{and} \,
  \abs{v} - 1 \in H^1(\R^d) \} .
\end{align*}
 Moreover, there exists $C>0$ such that for all $v \in E_\textnormal{logGP}$, $\norm{\abs{v} - 1}_{H^1}^2 \leq C \mathcal{E}_\textnormal{logGP} (v)$.
\end{lem}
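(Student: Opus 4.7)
The plan is to chain Kato's inequality with the potential-energy estimates of Lemmas~\ref{lem:energy_equiv} and~\ref{lem:E_pot_Lp}, then invoke a Sobolev embedding to close the loop.

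I would first establish the equivalence of the two set-theoretic descriptions on the right-hand side. For any $v\in H^1_\textnormal{loc}(\R^d)$, Kato's pointwise inequality gives $\abs{\nabla\abs{v}}\le\abs{\nabla v}$ almost everywhere, so $\nabla v\in L^2$ yields $\nabla(\abs{v}-1)=\nabla\abs{v}\in L^2$. This upgrades the condition $\abs{v}-1\in L^2$ to $\abs{v}-1\in H^1(\R^d)$ and proves the equality of the last two sets. The inclusion $E_\textnormal{logGP}\subseteq\{v\in H^1_\textnormal{loc}(\R^d):\nabla v\in L^2,\ \abs{v}-1\in L^2\}$ is then a direct consequence of Lemmas~\ref{lem:energy_equiv} and~\ref{lem:E_pot_Lp}: from $\mathcal{E}_\textnormal{pot}(v)<\infty$ one obtains $E_\textnormal{pot}(v)<\infty$, and the lower bound $\ln(2)\,\norm{\abs{v}-1}_{L^2}^2\le E_\textnormal{pot}(v)$ produces $\abs{v}-1\in L^2$.

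The real content lies in the reverse inclusion. Starting from $\nabla v\in L^2$ and $\abs{v}-1\in L^2$, Kato's inequality already places $\abs{v}-1$ in $H^1(\R^d)$. The Sobolev embedding $H^1(\R^d)\hookrightarrow L^p(\R^d)$ is available for some $p>2$ in every dimension (any $p\in[2,\infty]$ if $d=1$, any $p\in[2,\infty)$ if $d=2$, and $p=2d/(d-2)$ if $d\ge 3$), so $\abs{v}-1\in L^p$ for such a $p$. Applying the pointwise inequality underlying the upper bound in Lemma~\ref{lem:E_pot_Lp}, which is valid for any measurable $v$ for which the integrals are finite, gives $E_\textnormal{pot}(v)<\infty$; Lemma~\ref{lem:energy_equiv} then yields $\mathcal{E}_\textnormal{pot}(v)<\infty$, so $v\in E_\textnormal{logGP}$.

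Finally, the quantitative bound $\norm{\abs{v}-1}_{H^1}^2\le C\,\mathcal{E}_\textnormal{logGP}(v)$ follows by summing $\norm{\nabla\abs{v}}_{L^2}^2\le\norm{\nabla v}_{L^2}^2\le\mathcal{E}_\textnormal{logGP}(v)$ (Kato) with $\norm{\abs{v}-1}_{L^2}^2\le\frac{K_0}{\ln 2}\,\mathcal{E}_\textnormal{logGP}(v)$ (obtained by chaining Lemmas~\ref{lem:E_pot_Lp} and~\ref{lem:energy_equiv}). The only mildly delicate point is the reverse inclusion, where one must upgrade $L^2$-information on $\abs{v}-1$ to $L^p$-information with some $p>2$; this is precisely what Kato's inequality combined with the Sobolev embedding provides, uniformly in the dimension.
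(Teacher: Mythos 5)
Your proposal is correct and follows essentially the same route as the paper: the inclusion into the $L^2$ description via Lemmas~\ref{lem:energy_equiv} and~\ref{lem:E_pot_Lp}, the upgrade to $H^1$ via the pointwise bound $\abs{\nabla\abs{v}}\le\abs{\nabla v}$, and the reverse inclusion via Sobolev embedding combined with the pointwise estimate behind the upper bound of Lemma~\ref{lem:E_pot_Lp}. Your explicit remark that this last pointwise inequality holds for any $v$ with the relevant integrals finite (not just $v\in E_\textnormal{logGP}$) correctly dispels the apparent circularity, which the paper handles by simply redoing the computation directly.
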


\begin{proof}
 First, we show that $E_\textnormal{logGP} \subset \{ v \in
 H^1_\textnormal{loc} \, | \, \nabla v \in L^2 \, \textnormal{and}
 \, \abs{v} - 1 \in L^2 \}$. 

  Let $v \in E_\textnormal{logGP}$: $\norm{\nabla v}_{L^2} <
 \infty$, and $\mathcal{E}_\textnormal{pot} (v) < \infty$, which
 is equivalent to $E_\textnormal{pot} (v) <
 \infty$. Lemma~\ref{lem:E_pot_Lp} yields $\abs{v} - 1 \in L^2$. 
 \medskip
    
Conversely, let $v \in {H}^1_\textnormal{loc}$ such that $\abs{v} -  1
,\nabla v\in L^2$. Then $\nabla (\abs{v} - 1) = \Re\( \frac{\bar
  v}{\abs{v}} \nabla v\)$ and $\abs{\nabla (\abs{v} - 1)} \leq \abs{\nabla
  v}$, which shows that $f = \abs{v} - 1 \in H^1$.  Since $f \geq -1$,
we readily have 
    \begin{equation*}
        E_\textnormal{pot} (v) = \int_{\R^d} \abs{f}^2 \ln{(3 + f)} \diff x
        \leq \int_{\R^d} \abs{f}^2 \ln{(3 + \abs{f})} \diff x \lesssim \int_{\R^d} \abs{f}^2 \(1+ \abs{f}^\eps\) \diff x < \infty,
    \end{equation*}
where $\eps>0$ is arbitrarily small, and where we have used Sobolev
embedding to conclude.
    \medskip
    
 From the previous arguments, if $v \in E_\textnormal{logGP}$, %we have $(\abs{v} - 1)^2 \leq (\ln{2})^{-1} (\abs{v} - 1)^2 \ln{(2 + \abs{v})}$ and
    the $L^2$-norm of $|v|-1$ can be estimated thanks to Lemma~\ref{lem:E_pot_Lp} and Lemma~\ref{lem:energy_equiv}, whereas $\abs{\nabla (\abs{v} - 1)} \leq \abs{\nabla v}$, hence  the conclusion.
\end{proof}

\begin{cor} \label{cor:v_ln_v_Lp}
  For all $\delta\ge 1$, $\eps>0$, and $p$ such that 
  \begin{equation} \label{eq:p_cases}
        p,p+\eps \in 
        \begin{cases}
            [2, \infty] \quad \text{if } d = 1, \\
            [2, \infty) \quad \text{if } d = 2, \\
            [2, 2^*] \quad \text{if } d \geq 3,\quad 2^* \coloneqq \frac{2d}{d-2},
        \end{cases}
    \end{equation}
 there exists $C_{p,\eps,\delta}$ such that 
  if $v \in E_\textnormal{logGP}$, then $v \abs{\ln{\abs{v}^2}}^\delta
  \in L^p(\R^d)$ and 
    \begin{equation*}
        \norm{v \abs{\ln{\abs{v}^2}}^\delta}_{L^p} \leq C_{p, \varepsilon, \delta} \, (\mathcal{E}_\textnormal{logGP} (v)^{1/2} + \mathcal{E}_\textnormal{logGP} (v)^{(1 + \eps)/2}).
    \end{equation*}
  \end{cor}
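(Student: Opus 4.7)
The plan is to reduce everything to controlling $\norm{\abs{v}-1}_{H^1}$ through Lemma~\ref{lem:E_logGP_descr}, and then to split $\R^d$ into three regions according to whether $\abs{v}$ is near $0$, close to $1$, or large. On each region the pointwise bound of $\abs{v}\,\abs{\ln\abs{v}^2}^\delta$ by a power of $\abs{\abs{v}-1}$ will allow the $L^p$ norm to be controlled via Sobolev embedding applied to $\abs{v}-1$.

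First, Lemma~\ref{lem:E_logGP_descr} gives $\norm{\abs{v}-1}_{H^1}^2 \lesssim \mathcal{E}_\textnormal{logGP}(v)$, and since both $p$ and $p+\eps$ lie in the range \eqref{eq:p_cases}, the Sobolev embedding $H^1(\R^d)\hookrightarrow L^q(\R^d)$ yields $\norm{\abs{v}-1}_{L^q}\lesssim \mathcal{E}_\textnormal{logGP}(v)^{1/2}$ for $q\in\{p,p+\eps\}$. I then decompose $\R^d=A_1\cup A_3\cup A_2$ with $A_1=\{\abs{v}\le 1/2\}$, $A_3=\{1/2<\abs{v}<3/2\}$, $A_2=\{\abs{v}\ge 3/2\}$, and record the elementary pointwise inequalities $1\le 2\abs{\abs{v}-1}$ on $A_1$, $\abs{\abs{v}-1}\le 1/2$ on $A_3$, and $\abs{v}\le 3\abs{\abs{v}-1}$ on $A_2$.

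On $A_1$ the function $y\mapsto y\,\abs{\ln y^2}^\delta$ is bounded on $[0,1/2]$, so combined with $1\le 2\abs{\abs{v}-1}$ one gets $\abs{v}^p\abs{\ln\abs{v}^2}^{\delta p}\lesssim \abs{\abs{v}-1}^p$, with implicit constant depending on $p,\delta$. On $A_3$ the smoothness of the logarithm at $y=1$ gives $\abs{\ln\abs{v}^2}\lesssim \abs{\abs{v}-1}$, and combining this with $\delta\ge 1$ and $\abs{\abs{v}-1}\le 1/2$ yields $\abs{v}^p\abs{\ln\abs{v}^2}^{\delta p}\lesssim\abs{\abs{v}-1}^p$ as well. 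Integrating and invoking the Sobolev control, each of $A_1$ and $A_3$ contributes at most a multiple of $\mathcal{E}_\textnormal{logGP}(v)^{1/2}$ to the $L^p$ norm.

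The delicate piece is $A_2$, and this is where the second condition in \eqref{eq:p_cases} is used. For any $\eps'>0$ there is a constant $C_{\delta,\eps'}$ with $\abs{\ln y^2}^\delta\le C y^{\eps'}$ for $y\ge 3/2$; choosing $\eps'=\eps/p$ and using $\abs{v}\le 3\abs{\abs{v}-1}$, I obtain
\[
\abs{v}^p\abs{\ln\abs{v}^2}^{\delta p}\lesssim \abs{v}^{p+\eps}\lesssim \abs{\abs{v}-1}^{p+\eps}\quad\text{on } A_2.
\]
Integrating and taking $p$-th roots then gives
\[
\norm{\abs{v}\,\abs{\ln\abs{v}^2}^\delta}_{L^p(A_2)}\lesssim \mathcal{E}_\textnormal{logGP}(v)^{(p+\eps)/(2p)}\le \mathcal{E}_\textnormal{logGP}(v)^{1/2}+\mathcal{E}_\textnormal{logGP}(v)^{(1+\eps)/2},
\]
the last inequality following by treating separately $\mathcal{E}_\textnormal{logGP}(v)\le 1$ and $\mathcal{E}_\textnormal{logGP}(v)\ge 1$, since $(p+\eps)/(2p)=1/2+\eps/(2p)$ lies between $1/2$ and $(1+\eps)/2$. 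Summing the three regional contributions concludes the proof.

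The main obstacle is precisely this large-$\abs{v}$ region: the polynomial growth of the logarithm forces an extra factor $\abs{v}^{\eps'}$ in the pointwise estimate, and absorbing it requires Sobolev embedding at an exponent strictly above $p$, which is exactly why the hypothesis demands both $p$ and $p+\eps$ inside the range \eqref{eq:p_cases}.
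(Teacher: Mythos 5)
Your proof is correct and follows essentially the same route as the paper: the paper establishes the single pointwise bound $\abs{v}\abs{\ln\abs{v}^2}^\delta\le C(\abs{\abs{v}-1}+\abs{\abs{v}-1}^{1+\eps/p})$ by exactly the three-region splitting you use (small, intermediate, and large $\abs{v}$), and then concludes via Lemma~\ref{lem:E_logGP_descr} and Sobolev embedding, just as you do. Your region-by-region integration and the explicit handling of the exponent $(p+\eps)/(2p)\in[1/2,(1+\eps)/2]$ merely spell out details the paper leaves to the reader.
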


\begin{proof}
    We know that $\abs{v} - 1 \in H^1$ and, since $\delta \geq 1$,
    \begin{equation*}
        \abs{ v} \abs{\ln{\abs{v}}^2}^\delta  \le C_\delta \abs{\abs{v} -
          1} (\ln{(2 + \abs{\abs{v} - 1})})^\delta \leq
        C_{\varepsilon, \delta} (\abs{\abs{v} - 1} + \abs{\abs{v} -
          1}^{1+\frac{\varepsilon}{p}}). 
    \end{equation*}
This inequality is easily established, for instance by considering
separately the
regions 
$\{|v|<1/2\}$, $\{|v|>2\}$, and $\{1/2\le |v|\le 2\}$. 
    The result then follows from Sobolev embedding,
    Lemma~\ref{lem:E_logGP_descr} and Lemma~\ref{lem:energy_equiv}.
\end{proof}
\begin{rem}
  This corollary reveals an important difference with the case of
  vanishing boundary condition at infinity: in
  \cite{GuLoNi10,carlesgallagher}, momenta in $L^2$
  ($\||x|^\alpha u\|_{L^2}$ for some $\alpha\in (0,1]$) are considered
  in order to control the nonlinearity in the region $\{|u|<1\}$. The
  situation is obviously different in the case of nontrivial boundary
  condition at infinity, and the above corollary is crucial for the
  rest of this paper. 
\end{rem}

Lemma~\ref{lem:E_logGP_descr}  shows
that for $d\le 4$, $E_\textnormal{logGP} = E_\textnormal{GP}$, in view of the
identity
\begin{equation*}
  |u|^2-1 = \(|u|-1\)\(|u|-1+2\),
\end{equation*}
and the Sobolev embedding $H^1(\R^d)\subset L^4(\R^d)$ for $d\le 4$. 
In particular,
G\'erard \cite{Gerard_Cauchy_GP} proved that $E_\textnormal{GP} + H^1 \subset
E_\textnormal{GP}$ for $d \ge 1$, and therefore the same holds for
$E_\textnormal{logGP}$ when $d\le 4$. This is also true in all dimensions : 

\begin{lem} \label{lem:Energy_plus_H1}
    Let $d \geq 1$. For all $v \in E_\textnormal{logGP}$ and for all
    $f \in H^1$,  we have $v + f \in E_\textnormal{logGP}$. In
    addition, if $p > 2$ satisfies \eqref{eq:p_cases}, there exists
    $C_p$ such that for every such $v$ and $f$,
    \begin{equation*}
        \mathcal{E}_\textnormal{logGP} (v + f) \leq C_p
        \mathcal{E}_\textnormal{logGP} (v) + C_p
        \mathcal{E}_\textnormal{logGP} (v)^p + C _p\norm{f}_{H^1}^2 +
        C_p \norm{f}_{H^1}^p. 
    \end{equation*}
\end{lem}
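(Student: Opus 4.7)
The plan is to use the characterization of $E_\textnormal{logGP}$ from Lemma~\ref{lem:E_logGP_descr}, namely that $v \in E_\textnormal{logGP}$ iff $\nabla v \in L^2$ and $|v|-1 \in H^1$. Since both conditions pass through the triangle inequality (modulo absolute values), the membership $v+f \in E_\textnormal{logGP}$ should be almost immediate, and the energy bound will follow by splitting the potential energy into an $L^2$-piece and an $L^p$-piece via Lemma~\ref{lem:E_pot_Lp}.

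First, I would check that $v+f \in E_\textnormal{logGP}$. Clearly $\nabla(v+f) = \nabla v + \nabla f \in L^2$. For the potential part, the pointwise bound
\begin{equation*}
  \bigl||v+f|-1\bigr| \leq \bigl||v|-1\bigr| + |f|
\end{equation*}
together with $|v|-1 \in L^2$ and $f \in L^2$ gives $|v+f|-1 \in L^2$. Since moreover $|\nabla(|v+f|-1)| \leq |\nabla(v+f)| \in L^2$, we get $|v+f|-1 \in H^1$, so Lemma~\ref{lem:E_logGP_descr} yields $v+f \in E_\textnormal{logGP}$.

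Next, the quantitative estimate. For the kinetic part, $\|\nabla(v+f)\|_{L^2}^2 \leq 2\|\nabla v\|_{L^2}^2 + 2\|\nabla f\|_{L^2}^2 \leq 2\mathcal{E}_\textnormal{logGP}(v) + 2\|f\|_{H^1}^2$. For the potential part, I would combine Lemma~\ref{lem:energy_equiv} with Lemma~\ref{lem:E_pot_Lp} to obtain
\begin{equation*}
  \mathcal{E}_\textnormal{pot}(v+f) \lesssim \bigl\||v+f|-1\bigr\|_{L^2}^2 + \bigl\||v+f|-1\bigr\|_{L^p}^p,
\end{equation*}
and then use the pointwise bound above in both $L^2$ and $L^p$. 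The $L^2$-piece is controlled by $\mathcal{E}_\textnormal{logGP}(v) + \|f\|_{H^1}^2$ using Lemma~\ref{lem:E_logGP_descr}. For the $L^p$-piece, Sobolev embedding (valid since $p$ satisfies \eqref{eq:p_cases}) yields
\begin{equation*}
  \bigl\||v+f|-1\bigr\|_{L^p}^p \lesssim \bigl\||v|-1\bigr\|_{H^1}^p + \|f\|_{H^1}^p \lesssim \mathcal{E}_\textnormal{logGP}(v)^{p/2} + \|f\|_{H^1}^p.
\end{equation*}

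The only subtlety is reconciling the exponent $p/2$ with the $\mathcal{E}_\textnormal{logGP}(v)^p$ appearing in the target estimate. This is resolved by the elementary inequality $x^{p/2} \leq x + x^p$ for every $x \geq 0$ (since $p \geq 2$), which allows us to absorb $\mathcal{E}_\textnormal{logGP}(v)^{p/2}$ into $\mathcal{E}_\textnormal{logGP}(v) + \mathcal{E}_\textnormal{logGP}(v)^p$. Collecting all contributions gives the advertised bound with a suitable constant $C_p$. There is no real obstacle here; the whole argument is a careful bookkeeping exercise leveraging the clean characterization obtained in Lemma~\ref{lem:E_logGP_descr}.
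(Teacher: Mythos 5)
Your proof is correct and follows essentially the same route as the paper: the pointwise bound $\bigl||v+f|-1\bigr|\le\bigl||v|-1\bigr|+|f|$, then Lemmas~\ref{lem:energy_equiv} and \ref{lem:E_pot_Lp} to split the potential energy into $L^2$ and $L^p$ pieces, Sobolev embedding and Lemma~\ref{lem:E_logGP_descr} to control them, and the absorption $x^{p/2}\le x+x^p$ to match the stated exponents. The only cosmetic difference is that the paper derives the pointwise inequality via an integral representation of $|x+y|$ rather than the reverse triangle inequality, which changes nothing.
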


\begin{proof}
    Let $v \in E_\textnormal{logGP}$ and $f \in H^1$. We know that
    $\nabla v ,\nabla f\in L^2$,  so $\nabla(v + f) \in L^2$. From Lemma \ref{lem:E_logGP_descr}, there only remains to show that $\abs{v + f} - 1 \in L^2$. For this, we have for all $x, y \in \mathbb{C}$
    \begin{equation*}
        \abs{x + y} = \abs{x} + \int_0^1 \frac{x + r y}{\abs{x + r y}} \cdot y \diff r.
    \end{equation*}
    Applying this equality to $u$ and $f$, we get
    \begin{equation*}
        \abs{u + f} - 1 = \abs{u} - 1 + \int_0^1 \frac{u + r f}{\abs{u + r f}} \cdot f \diff r,
    \end{equation*}
    hence
    \begin{equation*}
        \abs{\abs{u + f} - 1} \leq \abs{\abs{u} - 1} + \abs{f},
    \end{equation*}
    and, for any $p \in [1, \infty]$,
    \begin{equation*}
        \norm{\abs{u + f} - 1}_{L^p} \leq \norm{\abs{u} - 1}_{L^p} + \norm{f}_{L^p}.
    \end{equation*}
    Moreover, Lemmas \ref{lem:energy_equiv} and \ref{lem:E_pot_Lp} yield, for any $p > 2$ satisfying \eqref{eq:p_cases},
    \begin{align*}
        \mathcal{E}_\textnormal{logGP} (v + f) &\leq \norm{\nabla v + \nabla f}_{L^2}^2 + C \norm{\abs{v + f} - 1}_{L^2}^2 + C_p \norm{\abs{v + f} - 1}_{L^p}^p \\
            &\lesssim \norm{\nabla v}_{L^2}^2 +  \norm{\nabla f}_{L^2}^2 + \norm{\abs{v} - 1}_{L^2}^2 + \norm{f}_{L^2}^2 + \norm{\abs{v} - 1}_{L^p}^p +  \norm{f}_{L^p}^p \\
            &\lesssim \mathcal{E}_\textnormal{logGP} (v) +  \norm{\abs{v} - 1}_{H^1}^p +  \norm{f}_{H^1}^2 + \norm{f}_{H^1}^p \\
            &\lesssim \mathcal{E}_\textnormal{logGP} (v) +  \mathcal{E}_\textnormal{logGP} (v)^p + \norm{f}_{H^1}^2 +  \norm{f}_{H^1}^p. \qedhere
    \end{align*}
\end{proof}

We conclude by adapting \cite[Lemma~1]{Gerard_Cauchy_GP}. 
Introduce the Zhidkov space
\begin{equation*}
 X^1(\R^d) = \{u\in L^\infty(\R^d),\ \nabla u\in L^2(\R^d)\}.  
\end{equation*}
\begin{lem}\label{lem:zhidkov}
  Let $d\ge 1$. We have $E_\textnormal{logGP}\subset X^1(\R^d)+H^1(\R^d)$.
\end{lem}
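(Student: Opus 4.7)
The plan is to mimic G\'erard's decomposition of $E_\textnormal{GP}$ by truncating the modulus of $v$. Fix once and for all a cutoff $\chi \in \mathcal{C}^\infty([0,\infty);[0,1])$ with $\chi(r)=1$ for $r\le 1$ and $\chi(r)=0$ for $r\ge 2$, and choose a constant $M\ge 2$ (any fixed value works). Given $v\in E_\textnormal{logGP}$, define
\begin{equation*}
w(x) \coloneqq \chi\!\left(\tfrac{|v(x)|}{M}\right) v(x), \qquad f(x) \coloneqq v(x) - w(x) = \left(1 - \chi\!\left(\tfrac{|v(x)|}{M}\right)\right) v(x),
\end{equation*}
so that $v = w + f$. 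It then suffices to verify $w\in X^1(\R^d)$ and $f\in H^1(\R^d)$.

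For the bounded part $w$, note that the map $\Phi\colon \C\to\C$, $\Phi(z) \coloneqq \chi(|z|/M)\,z$, is globally Lipschitz: its derivative is uniformly bounded because $\chi'$ is compactly supported away from $z=0$ (namely where $|z|/M\in[1,2]$), and $\Phi$ takes values in the closed disc of radius $2M$. Hence $\|w\|_{L^\infty}\le 2M$. Since $\nabla v\in L^2$ by Lemma~\ref{lem:E_logGP_descr}, the classical chain rule for Lipschitz compositions (which can be justified by a standard mollification argument) gives $|\nabla w| \le \mathrm{Lip}(\Phi)\,|\nabla v|$ a.e., so $\nabla w\in L^2$. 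Therefore $w\in X^1(\R^d)$.

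For $f$, the gradient is in $L^2$ immediately, since $\nabla f = \nabla v - \nabla w \in L^2$. The only thing to check is $f\in L^2$. But $f$ is supported in the set $\{|v|\ge M\}$, and on this set the inequality $|v|\le 2(|v|-1) \le 2\bigl||v|-1\bigr|$ holds because $M\ge 2$. Consequently
\begin{equation*}
|f(x)| \le |v(x)|\,\mathbf{1}_{\{|v|\ge M\}}(x) \le 2\bigl||v(x)|-1\bigr|,
\end{equation*}
and since $|v|-1\in L^2$ by Lemma~\ref{lem:E_logGP_descr}, we conclude $f\in L^2$, hence $f\in H^1(\R^d)$.

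The only non-routine point in this scheme is the chain rule for $w=\Phi(v)$ when $v$ is merely in $H^1_\textnormal{loc}$ with $\nabla v\in L^2$ (and not necessarily $v\in L^2$). This is not a genuine obstacle: one approximates $v$ on bounded domains by smooth functions, applies the chain rule there, and passes to the limit using the uniform Lipschitz bound on $\Phi$; alternatively one invokes the Stampacchia-type formula for Lipschitz compositions in $H^1_\textnormal{loc}$. Everything else is elementary, and the resulting decomposition $v=w+f$ with $w\in X^1(\R^d)$, $f\in H^1(\R^d)$ is precisely what is claimed.
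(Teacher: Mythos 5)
Your proof is correct and takes essentially the same approach as the paper: both decompose $v$ via a cutoff in its modulus, place the bounded piece in $X^1(\R^d)$, and control the remaining piece pointwise by $\bigl||v|-1\bigr|$, which lies in $L^2$ by Lemma~\ref{lem:E_logGP_descr}. Your direct bound $|f|\le 2\bigl||v|-1\bigr|$ is in fact a slight streamlining of the paper's step, which instead uses $|u_2|\le C_\delta \bigl||u|-1\bigr|^\delta$ with $\delta>1$ together with a Sobolev embedding.
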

\begin{proof}
  Proceeding like in the proof of \cite[Lemma~1]{Gerard_Cauchy_GP},
  consider $\chi\in \mathcal C_0^\infty(\C)$ a cutoff function such
  that $0\le \chi\le 1$, $\chi(z)=1$ for $|z|\le 2$ and $\chi(z)=0$
  for $|z|\ge 3$. Decompose $u\in E_\textnormal{logGP}$ as 
  \begin{equation*}
    u=u_1+u_2,\quad u_1=\chi(u)u,\quad u_2=\(1-\chi(u)\)u.
  \end{equation*}
Then $\|u_1\|_{L^\infty}\le 3$, and since $|u|\ge 2$ on the support of
$u_2$, for any $\delta>1$, we may find $C_\delta$ such that 
\begin{equation*}
  |u_2|\le C_\delta \left||u|-1\right|^\delta.
\end{equation*}
Since $|u|-1\in H^1(\R^d)$ from Lemma~\ref{lem:E_logGP_descr},
$|u|-1\in L^{2\delta}(\R^d)$ by Sobolev embedding (provided that
$\delta\le\frac{d}{d-2}$ if $d\ge 3$), hence $u_2\in L^2$. The
properties $\nabla u_1,\nabla u_2\in L^2$ are straightforward, and we
refer to \cite{Gerard_Cauchy_GP} for details.
\end{proof}

%%%%%%%%%%%%%%%%%%%%%%%%%%%%%%%%%%%%%%%%%%%%%%%%%%%%%%%%%%%%%%%%%%%%%%%

\section{Regularity and uniqueness}
\label{sec:regularity}

We begin by proving the following point in Theorem \ref{th:Cauchy_th_main}:

\begin{lem} \label{lem:u_in_plusH1}
    Let $I\ni 0$ be a time interval. If $u \in L^\infty_\textnormal{loc} (I,
    E_\textnormal{logGP})$ (that is, $t\mapsto \mathcal{E}_\textnormal{logGP}
    (u(t)) \in L^\infty_\textnormal{loc} (I)$)  is a
      weak solution to \eqref{logGP} on $I$, then $u - u_0 \in
    \mathcal C^0 (I, L^2)$.
\end{lem}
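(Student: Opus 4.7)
The plan is to read \eqref{logGP} as an identity in $H^{-1}(\R^d)$, deduce Lipschitz-in-$H^{-1}$ control of $u(t)-u(s)$ from the weak formulation, and then trade this quantitative control against the uniform $\dot H^1$-bound on $u-u_0$ via an elementary Fourier-side interpolation.

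The starting point is that Corollary~\ref{cor:v_ln_v_Lp} (with $\delta=1$, $p=2$, and $\eps>0$ chosen small enough to satisfy \eqref{eq:p_cases} in every dimension) yields $u\ln\abs{u}^2\in L^\infty_\textnormal{loc}(I,L^2(\R^d))$. Combined with $\Delta u\in L^\infty_\textnormal{loc}(I,H^{-1}(\R^d))$, which follows from the $\dot H^1$-bound on $u$, the equation reads $\partial_t u = i\Delta u - i\lambda u\ln\abs{u}^2 \in L^\infty_\textnormal{loc}(I,H^{-1}(\R^d))$. Testing the distributional equation against $\psi\in\mathcal C_0^\infty(\R^d)$ and integrating by parts gives
\begin{equation*}
\frac{d}{dt}\<u(t),\psi\> = -i\<\nabla u(t),\nabla\psi\> - i\lambda\<u(t)\ln\abs{u(t)}^2,\psi\>,
\end{equation*}
whose right-hand side is controlled by $C_J\norm{\psi}_{H^1}$ on every compact $J\subset I$, uniformly in $\psi$. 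Using the weak initial condition $\<u(t),\psi\>\to\<u_0,\psi\>$ as $t\to 0$, and then the analogous Lipschitz control between arbitrary times in $J$, one obtains $\abs{\<u(t)-u(s),\psi\>}\le C_J\abs{t-s}\norm{\psi}_{H^1}$ for $s,t\in J$ and all $\psi\in\mathcal C_0^\infty(\R^d)$. By density of $\mathcal C_0^\infty$ in $H^1$, the distribution $u(t)-u(s)$ extends to a continuous linear form on $H^1(\R^d)$, i.e.\ belongs to $H^{-1}(\R^d)$, with
\begin{equation*}
\norm{u(t)-u(s)}_{H^{-1}}\le C_J\abs{t-s}.
\end{equation*}

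The upgrade to $L^2$ continuity rests on the following elementary fact: for any tempered distribution $f$ with $f\in H^{-1}(\R^d)$ and $\nabla f\in L^2(\R^d)$, splitting $\int\abs{\hat f}^2\diff\xi$ at $\abs{\xi}=M$ and optimizing in $M>0$ produces $f\in L^2(\R^d)$ with
\begin{equation*}
\norm{f}_{L^2}^2 \le \norm{f}_{H^{-1}}^2 + 2\norm{f}_{H^{-1}}\norm{\nabla f}_{L^2}.
\end{equation*}
Applied to $f=u(t)-u(s)$, whose gradient is uniformly bounded in $L^2$ on $J$ by the $E_\textnormal{logGP}$-control, this yields $\norm{u(t)-u(s)}_{L^2}\lesssim\abs{t-s}^{1/2}$, hence $u-u_0\in\mathcal C^0(I,L^2(\R^d))$ (taking $s=0$ identifies the limit $u(t)\to u_0$ in $L^2$ as $t\to 0$).

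The main obstacle is the second step: neither $u(t)$ nor $u_0$ belongs to $H^{-1}(\R^d)$ individually (for instance, non-zero constants lie in $E_\textnormal{logGP}$ but not in $H^{-1}$), so one cannot simply view $u$ as an $H^{-1}$-valued continuous path. The proof must estimate the \emph{difference} $u(t)-u(s)$ directly in $H^{-1}$ by exploiting both the equation and the weak formulation of the initial condition, rather than attempting to give $u$ itself any global negative-regularity meaning.
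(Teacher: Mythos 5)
Your proof is correct and follows essentially the same route as the paper: the equation yields Lipschitz control of $u(t)-u(s)$ in $H^{-1}$, which is then interpolated against the uniform $L^2$ bound on $\nabla(u-u_0)$ to get $\tfrac12$-H\"older continuity in $L^2$. The only difference is that you spell out the Fourier-side interpolation inequality and the point that only the \emph{difference} $u(t)-u(s)$ lies in $H^{-1}$, both of which the paper leaves implicit in the phrase ``we obtain the result by interpolation.''
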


\begin{proof}
    Since $u \in L^\infty_\textnormal{loc} (I, E_\textnormal{logGP})$,
    we know from Corollary~\ref{cor:v_ln_v_Lp} that $u \ln{\abs{u}^2}
    \in L^\infty_\textnormal{loc} (I, L^2)$. Moreover, $\Delta u \in
    L^\infty_\textnormal{loc} (I, H^{-1})$. Therefore, \eqref{logGP}
    yields $\partial_t u =
    \partial_t (u - u_0) \in L^\infty_\textnormal{loc} (I, H^{-1})$,
    with $u(0) - u_0 = 0$. This proves that $u - u_0 \in \mathcal{C}^0
    (I, H^{-1})$. Since $\nabla(u - u_0) \in L^\infty_\textnormal{loc}
    (I, L^2)$, we obtain the result by interpolation. 
\end{proof}

This result allows us to infer the uniqueness of the solution of \eqref{logGP}.

\begin{theorem} \label{th:regu_sol}
    Let $u_0 \in E_\textnormal{logGP}$ and $I\ni 0$ be an
    interval. There exists at most one  weak solution
    $u\in L^\infty_{\rm loc}(I,  E_\textnormal{logGP})$
    to \eqref{logGP}. 
\end{theorem}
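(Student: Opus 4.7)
The plan is to follow the classical Cazenave--Haraux uniqueness argument, exploiting the dissipation estimate in Lemma~\ref{lem:log_inequality}. Suppose $u_1, u_2 \in L^\infty_{\rm loc}(I, E_\textnormal{logGP})$ are two weak solutions to \eqref{logGP} with $u_1(0) = u_2(0) = u_0$, and set $w := u_1 - u_2$. I would aim to show $w \equiv 0$ by a Gronwall argument applied to $\|w(t)\|_{L^2}^2$.

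First I would check that the difference $w$ is regular enough to run the energy argument. By Lemma~\ref{lem:u_in_plusH1}, $u_i - u_0 \in \mathcal C^0(I, L^2)$, so $w = (u_1 - u_0) - (u_2-u_0) \in \mathcal C^0(I, L^2)$. Since $\nabla u_i \in L^\infty_{\rm loc}(I, L^2)$ by the energy bound, $\nabla w \in L^\infty_{\rm loc}(I, L^2)$, and altogether $w \in L^\infty_{\rm loc}(I, H^1)$. Corollary~\ref{cor:v_ln_v_Lp} (with $\delta=1$, $p=2$) gives $u_i \ln |u_i|^2 \in L^\infty_{\rm loc}(I, L^2)$, and since $\Delta w \in L^\infty_{\rm loc}(I, H^{-1})$, the equation yields
\begin{equation*}
    \partial_t w = i \Delta w - i \lambda \bigl( u_1 \ln|u_1|^2 - u_2 \ln|u_2|^2\bigr) \in L^\infty_{\rm loc}(I, H^{-1}).
\end{equation*}

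Next I would apply the standard lemma that for $w \in L^2_{\rm loc}(I, H^1)$ with $\partial_t w \in L^2_{\rm loc}(I, H^{-1})$, the map $t\mapsto \|w(t)\|_{L^2}^2$ is absolutely continuous with $\tfrac{d}{dt}\|w\|_{L^2}^2 = 2 \Re \langle \partial_t w, w\rangle_{H^{-1}, H^1}$. Pairing the equation for $w$ with $w$ in $H^{-1}/H^1$ duality and taking imaginary parts, the term $\langle \Delta w, w\rangle = -\|\nabla w\|_{L^2}^2$ is real and drops out, leaving
\begin{equation*}
    \frac{1}{2} \frac{d}{dt}\|w(t)\|_{L^2}^2 = \lambda \, \Im \int_{\R^d} \bigl( u_1 \ln|u_1|^2 - u_2 \ln|u_2|^2\bigr) \overline{(u_1 - u_2)} \diff x.
\end{equation*}
Applying Lemma~\ref{lem:log_inequality} pointwise bounds the integrand in absolute value by $2 |w|^2$, hence
\begin{equation*}
    \frac{d}{dt}\|w(t)\|_{L^2}^2 \leq 4 \lambda \|w(t)\|_{L^2}^2.
\end{equation*}
Since $w(0) = 0$, Gronwall's lemma forces $w \equiv 0$ on $I$.

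The main technical obstacle is justifying the chain rule $\tfrac{d}{dt}\|w\|_{L^2}^2 = 2 \Re \langle \partial_t w, w\rangle$ rigorously given the regularity we have. The argument above uses the classical Lions--Magenes type result, which applies precisely because we verified $w \in L^\infty_{\rm loc}(I, H^1)$ with $\partial_t w \in L^\infty_{\rm loc}(I, H^{-1})$; in practice one either invokes this lemma directly or, to be safe, regularizes $w$ in time by convolution, runs the computation on the regularization, and passes to the limit using the continuity of the pairing and the $L^2$ continuity of $w$. Everything else—the integrability of the nonlinear term, the availability of Lemma~\ref{lem:log_inequality} pointwise, and the final Gronwall step—is immediate from the preliminaries.
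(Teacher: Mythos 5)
Your argument is correct and is essentially the paper's own proof: both reduce to the continuity of $w=u_1-u_2$ in $L^2$ via Lemma~\ref{lem:u_in_plusH1}, pair the equation for $w$ with $w$ in $H^{-1}\times H^1$ duality, take imaginary parts, apply Lemma~\ref{lem:log_inequality}, and conclude by Gronwall. The only difference is that you make the Lions--Magenes chain-rule justification explicit, which the paper leaves implicit.
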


\begin{proof}
    Let $u,v \in L^\infty (I, E_\textnormal{logGP})$ solve
    \eqref{logGP}. Then $u (t) - u_0$ and $v (t) - u_0$ are continuous
    from $I$ to $L^2$  from Lemma \ref{lem:u_in_plusH1}. Therefore, $w
    \coloneqq u - v \in \mathcal C^0(I;L^2)$. This error satisfies (in $\mathcal{D}' (I \times \mathbb{R}^d)$)
    \begin{equation*}
        i \partial_t w + \Delta w = \lambda (u \ln{\abs{u}^2} - v \ln{\abs{v}^2}).
    \end{equation*}
    Since on the one hand $\Delta w \in L^\infty (I, H^{-1})$, and on
    the other hand $u \ln{\abs{u}^2},v \ln{\abs{v}^2}\in L^\infty (I,
    L^2)$, the previous equality is also satisfied in $L^\infty (I,
    H^{-1})$. As $w \in L^\infty (I, H^1)$, we can take the $H^{-1}
    \times H^1$ bracket against $w$ of the previous equality, which
    yields 
    \begin{equation} \label{eq:pde_w}
        i \langle \partial_t w, w \rangle_{H^{-1}, H^1} - \norm{\nabla w}_{L^2}^2 = \lambda \int_{\R^d} (u \ln{\abs{u}^2} - v \ln{\abs{v}^2}) (\overline{u - v}) \diff x.
    \end{equation}
    Thanks to Lemma \ref{lem:log_inequality}, we have
    \begin{equation*}
        \abs{\Im \int_{\R^d} (u \ln{\abs{u}^2} - v \ln{\abs{v}^2}) (\overline{u - v}) \diff x} \leq 2 \norm{u - v}_{L^2}^2 = 2 \norm{w}_{L^2}^2.
    \end{equation*}
    Therefore, taking the imaginary part of \eqref{eq:pde_w} leads to
    \begin{equation*}
        \abs{\Re \langle \partial_t w, w \rangle_{H^{-1}, H^1}} = \frac{1}{2} \abs{\frac{\diff}{\diff t} \norm{w}_{L^2}^2} \leq 2 \lambda \norm{w}_{L^2}^2.
    \end{equation*}
    Since $w(0) = 0$, Gronwall lemma concludes the proof.
\end{proof}

We end this section with a link between regularity and mild solution.

\begin{lem} \label{lem:mild_sol}
    Let $I\ni 0$ be an open interval of $\mathbb{R}$, $u_0 \in
    E_\textnormal{logGP}$ and $u \in L^\infty_\textnormal{loc} (I,
    E_\textnormal{logGP})$ be a weak solution to
      \eqref{logGP} on $I$. Then $u$ is a mild solution, and 
    \begin{equation*}
      \int_0^t e^{- i s \Delta} \Bigl( u(s) \ln{\abs{u (s)}^2} \Bigr)
      \diff s \in H^1(\R^d)\quad \text{for all }t\in I.
    \end{equation*}
\end{lem}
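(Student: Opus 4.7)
The plan is to exhibit a candidate mild solution and identify it with $u$ via uniqueness for the free Schr\"odinger equation in $L^2$. Set $F(s)\coloneqq u(s)\ln|u(s)|^2$: by Corollary~\ref{cor:v_ln_v_Lp}, $F\in L^\infty_\textnormal{loc}(I,L^2)$, so the Duhamel integral $G(t)\coloneqq \int_0^t e^{i(t-s)\Delta}F(s)\diff s$ lies in $\mathcal{C}^0(I,L^2)$ and satisfies $i\partial_t G+\Delta G=iF$ in $\mathcal{D}'(I\times\R^d)$.

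The delicate point is giving meaning to $e^{it\Delta}u_0$, since $u_0$ need not be in $L^2$. Using Lemma~\ref{lem:zhidkov} to regard $u_0\in X^1(\R^d)+H^1(\R^d)\subset \mathcal{S}'(\R^d)$, the propagator $e^{it\Delta}u_0$ is well-defined as a tempered distribution. The key observation is that the difference $(e^{it\Delta}-\textnormal{Id})u_0$ lies in $L^2$, with the Plancherel bound
\begin{equation*}
\|(e^{it\Delta}-\textnormal{Id})u_0\|_{L^2}^2=\int|e^{-it|\xi|^2}-1|^2|\widehat{u_0}(\xi)|^2\diff\xi \le C(1+t^2)\|\nabla u_0\|_{L^2}^2,
\end{equation*}
obtained by splitting the integral at $|\xi|=1$ and using $|e^{-it|\xi|^2}-1|^2\le \min(4,t^2|\xi|^4)$; note that a possible delta at $\xi=0$ in $\widehat{u_0}$ is killed by the vanishing factor. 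Continuity in $t$ follows from dominated convergence on the Fourier side.

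With this in hand, I set $w(t)\coloneqq u(t)-e^{it\Delta}u_0+i\lambda G(t)=(u(t)-u_0)-(e^{it\Delta}-\textnormal{Id})u_0+i\lambda G(t)$. By Lemma~\ref{lem:u_in_plusH1}, the first summand belongs to $\mathcal{C}^0(I,L^2)$, and the other two do by the previous paragraph; hence $w\in \mathcal{C}^0(I,L^2)$ with $w(0)=0$. A direct distributional computation, using the \eqref{logGP} equation satisfied by $u$, the free equation satisfied by $t\mapsto e^{it\Delta}u_0$ in $\mathcal{S}'$, and $i\partial_t G+\Delta G=iF$, yields $i\partial_t w+\Delta w=\lambda F-0+i\lambda(iF)=0$ in $\mathcal{D}'(I\times\R^d)$. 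Then $\widehat{w}(t,\xi)=e^{-it|\xi|^2}\widehat{w}(0,\xi)=0$, so $w\equiv 0$, which is exactly Duhamel's formula.

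For the final claim, applying $e^{-it\Delta}$ to Duhamel gives $-i\lambda\int_0^t e^{-is\Delta}F(s)\diff s = e^{-it\Delta}u(t)-u_0$; the right-hand side lies in $L^2$ by what precedes, and its gradient equals $e^{-it\Delta}\nabla u(t)-\nabla u_0\in L^2$, so the integral belongs to $H^1$. The main obstacle is not analytic but organizational: ensuring every term is meaningful despite $u_0\notin L^2$, and arranging the computation so that the nonlinear and free-propagation contributions cancel at the level of distributions. Once that framework is in place, the argument is purely algebraic.
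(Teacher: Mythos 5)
Your proof is correct and follows essentially the same route as the paper: define the Duhamel integral (legitimate since $u\ln|u|^2\in L^\infty_{\textnormal{loc}}(I,L^2)$ by Corollary~\ref{cor:v_ln_v_Lp}), give meaning to $e^{it\Delta}u_0$ via Lemma~\ref{lem:zhidkov}, show the difference $w$ solves the free Schr\"odinger equation in $\mathcal C^0(I,L^2)$ with $w(0)=0$, and recover the $H^1$ claim by applying $e^{-it\Delta}$ to Duhamel's formula. The one soft spot is the Plancherel identity for $(e^{it\Delta}-\textnormal{Id})u_0$: for $u_0\in X^1+H^1$ the Fourier transform $\widehat{u_0}$ is a tempered distribution that need not be a locally integrable function plus a point mass at the origin (in $d=1$, $\widehat{\tanh}$ is a principal value), so the expression $\int|e^{-it|\xi|^2}-1|^2|\widehat{u_0}(\xi)|^2\diff\xi$ should be justified by splitting $u_0$ according to Lemma~\ref{lem:zhidkov} and treating the Zhidkov part separately, which is exactly the estimate the paper imports from \cite{Gerard_Cauchy_GP}; this is a presentational fix rather than a gap.
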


\begin{proof}
    From Corollary~\ref{cor:v_ln_v_Lp}, we have $u \ln{\abs{u}^2} \in L^\infty_\textnormal{loc} (I, L^2)$, thus we can define
    \begin{equation*}
        v_\textnormal{NL} (t) = - i \lambda \int_0^t e^{-i s \Delta} u (s) \ln{\abs{u (s)}^2} \diff s \in \mathcal{C}^0 (I, L^2) \cap W^{1, \infty}_\textnormal{loc} (I, H^{-2}).
    \end{equation*}
    On the other hand, since $\nabla u_0 \in L^2$ and $u_0 \in
    L^\infty + L^2$ (from Lemma~\ref{lem:zhidkov}), we can define
    $e^{i t \Delta} u_0$ and we have $e^{i t \Delta} u_0 - u_0 \in
    \mathcal{C}^0 (I, L^2) \cap \mathcal{C}^1 (I, H^{-1})$ (see
    \cite{Gerard_Cauchy_GP}). 
    Therefore,
    \begin{equation*}
        v \coloneqq e^{i t \Delta} u_0 +e^{i t \Delta}
        v_\textnormal{NL} (t) \in u_0 + \mathcal{C}^0 (I, L^2) \cap
        W^{1, \infty}_\textnormal{loc}  (I, H^{-2}),
    \end{equation*}
    and we can compute
    \begin{align*}
      \partial_t v &= i \Delta e^{i t \Delta} \Bigl[ u_0 - i \lambda
                     \int_0^t e^{- i s \Delta} u(s) \ln{\abs{u (s)}^2}
                     \diff s \Bigr] \\
&\quad+ e^{i t \Delta} \partial_t \Bigl[ u_0 - i \lambda \int_0^t e^{- i s \Delta} u(s) \ln{\abs{u (s)}^2} \diff s \Bigr] \\
            &= i \Delta v + e^{i t \Delta} \Bigl[ - i \lambda e^{- i t \Delta} u(t) \ln{\abs{u (t)}^2} \Bigr] \\
            &= i \Delta v - i \lambda u(t) \ln{\abs{u (t)}^2},
    \end{align*}
    where the equality  is to be understood in
    $L^\infty_\textnormal{loc} (I, H^{-2})$. Let $w = u - v$. Then, $w
    \in \mathcal{C}^0 (I, L^2)$ from Lemma \ref{lem:u_in_plusH1} along
    with the previous arguments, and with the previous equality, we
    have,  in $L^\infty_\textnormal{loc} (I, H^{-2})$,
    \begin{equation*}
        \partial_t w - i \Delta w = 0.
    \end{equation*}
    Since $w (0) = 0$ by construction, we get $w = 0$, which proves the mild formulation.
    Last,  we know that  $\nabla (u(t) - u_0) \in L^2$ and $\nabla (e^{i t \Delta} u_0 - u_0) \in L^2$ for all $t \in I$, therefore
    \begin{equation*}
        \nabla v_\textnormal{NL} (t) \in L^2, \qquad \forall t \in I.
    \end{equation*}
    Since we already know that $v_\textnormal{NL} (t) \in L^2$, we get the conclusion.
\end{proof}

%%%%%%%%%%%%%%%%%%%%%%%%%%%%%%%%%%%%%%%%%%%%%%%%%%%%%%%%%%%%%%%%%%%%%%%

\section{Construction of a solution}
\label{sec:construction}

In this section, we construct a global solution $u \in L^\infty
(\mathbb{R}, \mathcal{E}_\textnormal{logNLS})$ to \eqref{logGP}. We
adapt the method used by Ginibre and Velo
\cite{Ginibre_Velo__Cauchy_NLS_revisited} to construct global weak
solutions to non-linear Schrödinger equations (NLS) by
compactness. Here, the framework is different since the solution is
not in $L^2$. However, we know that we should have $u (t) - u_0 \in
H^1$. Therefore, we will approximate the solution on $u_0 + X_m$ with
$X_m$ a sequence of finite dimensional linear subspaces approximating $H^1$.
\begin{rem}
  In \cite{carlesgallagher}, for vanishing boundary condition at
  infinity, another approximation is considered,
  consisting in removing the singularity of the logarithm by
  saturating the nonlinearity, for $\eps>0$,
  \begin{equation}\label{eq:logNLSreg}
       i  \partial_t u^\eps + \Delta u^\eps = \lambda u^\eps \ln\({\abs{u^\eps}^2+\eps}\), \quad
   u^\eps_{\mid t=0} =u_0,
  \end{equation}
and letting  $\eps\to 0$. This approximation has the advantage of working
whichever the sign of $\lambda$ is, while the approach introduced
initially in \cite{cazenave-haraux} (see also \cite{HayashiM2018})
seems to be bound to the 
nondispersive case $\lambda<0$. 
\color{black}
In the case of Gross-Pitaevskii equation,  integrability  constraints
are different, and the corresponding
potential energy is given by
\begin{equation*}
  \lambda\int_{\R^d} \( (|u^\eps|^2+\eps)\ln\(|u^\eps|^2+\eps\) -
  \(1+\ln(1+\eps)\)\(|u^\eps|^2-1\) - (1+\eps)\ln(1+\eps)\).
\end{equation*}
We believe that this strategy should work,  possibly with long
computations; we rather choose an approach
involving an energy independent of the approximation. 
\color{black}
\end{rem}

\subsection{Finite dimensional approximation}

Let $\{ w_j \}_{j \in \mathbb{N}}$ a Hilbert basis of $L^2$ with
all $w_j \in H^1$ and $m \in \mathbb{N}$. One may think for instance
of Hermite functions, all the more since in
Lemma~\ref{lem:dt_um_unif_bound} below, in order to prove the
propagation of $\dot H^2$ regularity, we further require $w_j\in
H^2$. We take $X_m \coloneqq
\operatorname{Vect} (w_j)_{j \leq m}$ and look for an approximation
$u_m$ of the form 
\begin{equation} \label{eq:form_u_m}
    u_m (t,x) = u_0(x) + \sum_{k=0}^m g_{m, k} (t) w_k(x) \eqqcolon u_0(x) +
    \varphi_m (t,x), 
\end{equation}
satisfying 
\begin{equation} \label{eq:eq_u_m}
    \< w_j, i \partial_t u_m + \Delta u_m - \lambda u_m
    \ln{\abs{u_m}^2} \>_{H^1, H^{-1}} = 0,\quad 0\le j\le m,\ \forall
    t\in \R,
\end{equation}
\color{black}where $\<\varphi,\psi\>_{H^1, H^{-1}} =\int_{\R^d} \varphi\bar \psi$ for
  Schwartz functions, \color{black} 
with initial condition $u_m (t) = u_0$, which is equivalent to $g_{m, k} (0) = 0$ for all $0 \leq k \leq m$.
By substitution, \eqref{eq:eq_u_m} is equivalent to
\begin{equation} \label{eq:ode_g_mk}
  \begin{aligned}
        i \dot{\bar g}_{m, j} - \< \nabla w_j, \nabla u_0 \> - &\sum_{k = 0}^m
   \bar  g_{m, k} (t) \< \nabla w_j, \nabla w_k \> \\
&= \lambda \< w_j, f \(
    u_0 + \sum_{k=0}^m g_{m, k} (t) w_k \) \>, 
  \end{aligned}
\end{equation}
where $f(x) = x \ln{\abs{x}^2}$.
First, we study the last term.

\begin{lem} \label{lem:cont_log_term_um}
 The map   $\varphi \mapsto \< w_j, f (\varphi) \>$ is well defined
 and continuous from $u_0 + H^1$ into $\mathbb{C}$ for any $j \in
 \mathbb{N}$. More precisely, it is $\mathcal{C}^{0, \varepsilon} (u_0
 + H^1, \mathbb{C})$ for all $\varepsilon \in (0, 1)$. 
\end{lem}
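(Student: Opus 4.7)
I would split the argument into well-definedness and local $\varepsilon$-Hölder continuity.

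For well-definedness, the inclusion $u_0 + H^1 \subset E_\textnormal{logGP}$ provided by Lemma~\ref{lem:Energy_plus_H1} places $\varphi$ in the energy space, so Corollary~\ref{cor:v_ln_v_Lp} (with $\delta = 1$, $p = 2$) gives $\varphi \ln \abs{\varphi}^2 \in L^2(\R^d)$. Pairing with $w_j \in H^1 \subset L^2$ via Cauchy-Schwarz makes $\<w_j, f(\varphi)\>$ finite.

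For the Hölder continuity, I would fix $\varepsilon \in (0,1)$, take $\varphi_1, \varphi_2 \in u_0 + H^1$, and set $h \coloneqq \varphi_1 - \varphi_2 \in H^1$. Applying Lemma~\ref{lem:prop_lip_log} pointwise with parameter $1-\varepsilon$ gives
\begin{equation*}
  \abs{f(\varphi_1) - f(\varphi_2)} \leq C_\varepsilon \sum_{i=1,2} \abs{\varphi_i}^{1-\varepsilon}\abs{\ln\abs{\varphi_i}}\abs{h}^\varepsilon + 2\abs{h}.
\end{equation*}
The $2\abs{h}$ term, integrated against $\abs{w_j}$, yields $2\norm{w_j}_{L^2}\norm{h}_{L^2}$, a Lipschitz bound that trivially implies $\varepsilon$-Hölder control on bounded subsets. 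For the main term, the key is the pointwise inequality
\begin{equation*}
  \abs{v}^\mu \abs{\ln\abs{v}} \leq C_\mu \abs{\abs{v}-1}, \qquad \mu \in (0,1),\ v \in \C,
\end{equation*}
which I would establish by checking that $r \mapsto r^\mu\abs{\ln r}/\abs{r-1}$ on $(0,\infty)\setminus\{1\}$ extends to a bounded continuous function on $[0,\infty]$, with limits $0$ at $r = 0^+$ and $r = +\infty$ (using $\mu < 1$) and the finite value $1$ at $r = 1$. Hölder's inequality with exponents $2/(1-\varepsilon)$, $2$, and $2/\varepsilon$ (reciprocals summing to $1$) then yields
\begin{equation*}
  \int \abs{w_j}\abs{\varphi_i}^{1-\varepsilon}\abs{\ln\abs{\varphi_i}}\abs{h}^\varepsilon \diff x \leq C_\varepsilon \norm{w_j}_{L^{2/(1-\varepsilon)}}\norm{\abs{\varphi_i}-1}_{L^2}\norm{h}_{L^2}^\varepsilon.
\end{equation*}
Lemma~\ref{lem:E_logGP_descr} controls $\norm{\abs{\varphi_i}-1}_{L^2}$ by $C\mathcal{E}_\textnormal{logGP}(\varphi_i)^{1/2}$, and Lemma~\ref{lem:Energy_plus_H1} bounds this energy uniformly on $H^1$-bounded subsets of $u_0 + H^1$. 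Combined with $\norm{h}_{L^2}^\varepsilon \leq \norm{h}_{H^1}^\varepsilon$, this gives the desired $\varepsilon$-Hölder estimate.

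The main obstacle is the pointwise inequality $\abs{v}^\mu\abs{\ln\abs{v}} \lesssim_\mu \abs{\abs{v}-1}$, which requires $\mu \in (0,1)$ strictly: it degenerates at $\mu = 0$ (where $\abs{\ln\abs{v}}$ is unbounded near $\abs{v}=0$) and at $\mu = 1$ (where $\abs{v}\abs{\ln\abs{v}}$ grows faster than $\abs{\abs{v}-1}$ at infinity), which is what prevents reaching Lipschitz regularity. A secondary obstruction is the integrability $w_j \in L^{2/(1-\varepsilon)}$: automatic in $d=1,2$ by Sobolev, but for $\varepsilon$ close to $1$ in dimensions $d \geq 3$ it requires more than $w_j \in H^1$; choosing the $w_j$ as Hermite functions (Schwartz, hence in every $L^p$), as suggested in the preamble to the construction, removes this restriction.
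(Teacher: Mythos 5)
Your argument is correct, and it reaches the same conclusion by a genuinely different treatment of the weight factor coming from Lemma~\ref{lem:prop_lip_log}. The paper keeps the full weight $\abs{\varphi}^{\varepsilon}\abs{\ln\abs{\varphi}}$ and applies H\"older with exponents $2$, $\tfrac2\varepsilon$, $\tfrac{2}{1-\varepsilon}$, placing $w_j$ in $L^2$ and using the identity $\norm{\abs{\varphi}^{\varepsilon}\abs{\ln\abs{\varphi}}}_{L^{2/\varepsilon}}=\norm{\varphi\,\abs{\ln\abs{\varphi}}^{1/\varepsilon}}_{L^2}^{\varepsilon}$, which is exactly what Corollary~\ref{cor:v_ln_v_Lp} (with $p=2$, $\delta=1/\varepsilon$) controls; this is why the paper's proof works for an arbitrary Hilbert basis $\{w_j\}\subset H^1$ in every dimension. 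You instead flatten the weight via the pointwise bound $\abs{v}^{\mu}\abs{\ln\abs{v}}\le C_\mu\abs{\abs{v}-1}$ for $\mu\in(0,1)$ (which is correct, and is in the same spirit as the inequality used inside the proof of Corollary~\ref{cor:v_ln_v_Lp}), then invoke Lemma~\ref{lem:E_logGP_descr}; the price is that $w_j$ must absorb the exponent $\tfrac{2}{1-\varepsilon}$, which for $d\ge 3$ and $\varepsilon>2/d$ exceeds what $H^1$ (or even $H^2$) provides by Sobolev embedding. You correctly flag this and the Hermite-function fix; note, however, that as stated the lemma assumes only $w_j\in H^1$, so with a general basis your argument yields the full range $\varepsilon\in(0,1)$ only for $d\le 2$, whereas the paper's H\"older split avoids the issue entirely. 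For the purposes of the Galerkin construction, where the basis may be chosen Schwartz-class, the two proofs are interchangeable.
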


\begin{proof}
    By Corollary~\ref{cor:v_ln_v_Lp} along with
    Lemma~\ref{lem:Energy_plus_H1}, we know that the right-hand side
    is well 
    defined. Moreover, at fixed $j$, taking $\varphi, \psi \in u_0 +
    H^1$, we get 
    \begin{align*}
        \abs{\< w_j, f ( \varphi ) \> - \< w_j, f ( \psi ) \>} &= \abs{\< w_j, f ( \varphi ) - f ( \psi ) \>} \leq \int_{\R^d} \abs{w_j} \abs{f ( \varphi ) - f ( \psi )}
              \diff x \\ 
            &\leq C \int_{\R^d} \abs{w_j} \Bigl( \abs{\varphi}^\varepsilon
              \abs{\ln{\abs{\varphi}}} + \abs{\psi}^\varepsilon
              \abs{\ln{\abs{\psi}}} \Bigr) \abs{\varphi - \psi}^{1 -
              \varepsilon} \diff x \\
&\quad+ 2 \int_{\R^d} \abs{w_j} \abs{\varphi -
              \psi} \diff x, 
    \end{align*}
    for any $\varepsilon \in (0, 1)$, where we have used Lemma \ref{lem:prop_lip_log}.
    The second term of the last inequality is obviously Lipschitzian with
    respect to the $L^2$-norm of the difference. As for the first
    term, by H\"older inequality with exponents $2$,
    $\frac{2}{\varepsilon}$ and $\frac{2}{1 - \varepsilon}$, we
    control it by  
    \begin{equation*}
 \norm{w_j}_{L^2} \norm{\varphi - \psi}_{L^2}^{1 - \varepsilon} \Bigl( \norm{\varphi \abs{\ln{\abs{\varphi}}}^\frac{1}{\varepsilon}}_{L^2}^\varepsilon + \norm{\psi \abs{\ln{\abs{\psi}}}^\frac{1}{\varepsilon}}_{L^2}^\varepsilon \Bigr).
    \end{equation*}
Now,    $\norm{\varphi \abs{\ln{\abs{\varphi}}}^\frac{1}{\varepsilon}}_{L^2}$ and $\norm{\psi \abs{\ln{\abs{\psi}}}^\frac{1}{\varepsilon}}_{L^2}$ are locally bounded thanks to Corollary \ref{cor:v_ln_v_Lp} and Lemma \ref{lem:Energy_plus_H1} once again. These estimates yield the conclusion.
\end{proof}

\begin{lem} \label{lem:exist_sol_app}
    There exists a solution $\{ g_{m, k} \}_{k \leq m}$ to
    \eqref{eq:ode_g_mk} on a maximal time interval $(- T_m, T^m)$,
    meaning that $T^m < \infty$ if and only if
    \begin{equation*}
      \limsup_{t \rightarrow
      T^m} \sup_{j\le m}\abs{g_{m, j} (t)} = \infty.
    \end{equation*}
     (And    similarly for $T_m$.) Moreover, the $g_{m, k}$'s are
    $\mathcal{C}^{1, 1 - \varepsilon} (- T_m, T^m)$ for all
    $\varepsilon \in (0, 1)$. 
\end{lem}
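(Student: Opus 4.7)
The plan is to recast \eqref{eq:ode_g_mk} as an explicit first-order ODE system on $\mathbb{C}^{m+1}$ with a continuous (indeed Hölder) right-hand side, apply the Cauchy-Peano existence theorem, and then bootstrap to the stated Hölder regularity of $\dot g_{m,j}$.

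First, I would observe that \eqref{eq:ode_g_mk} already expresses $\dot{\bar g}_{m,j}$ explicitly as a function of $\mathbf{g}_m(t)\coloneqq (g_{m,0}(t),\dots,g_{m,m}(t))$: namely, $\dot{\bar g}_{m,j}=F_j(\mathbf{g}_m)$ with
\begin{equation*}
F_j(\mathbf{g}_m)=-i\Bigl(\langle \nabla w_j,\nabla u_0\rangle+\sum_{k=0}^m\bar g_{m,k}\langle \nabla w_j,\nabla w_k\rangle+\lambda\Bigl\langle w_j,f\Bigl(u_0+\sum_{k=0}^m g_{m,k}w_k\Bigr)\Bigr\rangle\Bigr),
\end{equation*}
so no matrix inversion is required. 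The affine part in $\mathbf{g}_m$ is smooth, while the finite-dimensional map $\mathbf{g}_m\mapsto u_0+\sum_k g_{m,k}w_k$ sends $\mathbb{C}^{m+1}$ Lipschitz-continuously into $u_0+H^1(\R^d)$; composing with Lemma~\ref{lem:cont_log_term_um} shows that $F=(F_0,\dots,F_m):\mathbb{C}^{m+1}\to\mathbb{C}^{m+1}$ is continuous, and more precisely $\mathcal{C}^{0,\eta}$ on bounded sets for every $\eta\in(0,1)$.

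Because $F$ is only Hölder and not Lipschitz (which is exactly the issue caused by the singularity of $f$ at the origin, and is the main technical obstacle here), I cannot invoke Picard-Lindelöf; instead, the Cauchy-Peano existence theorem applied with the initial condition $\mathbf{g}_m(0)=0$ produces a local $\mathcal{C}^1$ solution. Extending it to a maximal interval $(-T_m,T^m)$ is standard, and so is the blow-up alternative: if $T^m<\infty$ and $\sup_{0\le t<T^m}\sup_j|g_{m,j}(t)|<\infty$, then $\dot{\mathbf{g}}_m=F(\mathbf{g}_m)$ would stay bounded on the orbit by continuity of $F$, $\mathbf{g}_m$ would extend continuously up to $T^m$, and restarting Peano's theorem at that endpoint would contradict maximality. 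The same argument applies at $-T_m$.

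Finally, I would bootstrap the regularity. On any compact subinterval $J\Subset(-T_m,T^m)$, the continuous function $\dot{\mathbf{g}}_m=F(\mathbf{g}_m)$ is bounded, so $\mathbf{g}_m|_J$ is Lipschitz. Fixing $\varepsilon\in(0,1)$ and using the $\mathcal{C}^{0,1-\varepsilon}$ regularity of $F$ on the bounded set $\mathbf{g}_m(J)$ given by Lemma~\ref{lem:cont_log_term_um}, I obtain
\begin{equation*}
|\dot{\mathbf{g}}_m(t_1)-\dot{\mathbf{g}}_m(t_2)|=|F(\mathbf{g}_m(t_1))-F(\mathbf{g}_m(t_2))|\lesssim|\mathbf{g}_m(t_1)-\mathbf{g}_m(t_2)|^{1-\varepsilon}\lesssim|t_1-t_2|^{1-\varepsilon},
\end{equation*}
for all $t_1,t_2\in J$. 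This yields $\mathbf{g}_m\in\mathcal{C}^{1,1-\varepsilon}(-T_m,T^m)$ for every $\varepsilon\in(0,1)$, as claimed.
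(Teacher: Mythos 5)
Your proposal is correct and follows essentially the same route as the paper: continuity (indeed H\"older continuity) of the right-hand side via Lemma~\ref{lem:cont_log_term_um}, the Cauchy--Peano theorem for local existence since the nonlinearity is not locally Lipschitz, and the standard extension/blow-up alternative. The only difference is that you spell out the bootstrap giving $\mathcal{C}^{1,1-\varepsilon}$ regularity of the $g_{m,k}$'s (local Lipschitz continuity of $\mathbf{g}_m$ composed with the H\"older continuity of $F$), which the paper leaves implicit; that step is carried out correctly.
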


\begin{proof}
    The last term of \eqref{eq:ode_g_mk} is continuous (and even
    $\mathcal{C}^{0, \varepsilon}$ for all $\varepsilon \in (0, 1)$)
    with respect to the $g_{m, k}$'s thanks to Lemma~\ref{lem:cont_log_term_um}, and so are obviously the other
    terms. The conclusion comes from Peano Theorem and the fact that
    \eqref{eq:ode_g_mk} is an autonomous ODE.  Note that
    we invoke Peano Theorem, and not Cauchy-Lipschitz Theorem, since
    the nonlinearity $f$ is continuous, but not locally Lipschitzian,
    due to the singularity of the logarithm at the origin.
\end{proof}

We will then show that such a solution is global. First, we prove an intermediate result.

\begin{lem} \label{lem:nabla_lin_indep}
    For all $m \in \mathbb{N}$, $(\nabla w_k)_{k \leq m}$ are linearly independent.
\end{lem}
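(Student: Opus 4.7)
The plan is to argue directly from the definition of linear independence, exploiting that $(w_k)_{k\in\N}$ is a Hilbert basis of $L^2(\R^d)$ and in particular a linearly independent family. Suppose scalars $\lambda_0,\dots,\lambda_m\in\C$ satisfy $\sum_{k=0}^m \lambda_k \nabla w_k = 0$ in $L^2(\R^d)$. Setting $w:=\sum_{k=0}^m \lambda_k w_k$, I would first note that $w\in L^2(\R^d)$ and $\nabla w = 0$ in $\mathcal D'(\R^d)$.

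The next step is to observe that a distribution on $\R^d$ whose gradient vanishes is constant, so $w$ is a constant. Since $w\in L^2(\R^d)$ and $d\ge 1$, the only constant function lying in $L^2(\R^d)$ is $0$. Hence $\sum_{k=0}^m \lambda_k w_k = 0$ in $L^2(\R^d)$, and because $(w_k)_{k\in\N}$ is a Hilbert basis (hence linearly independent) of $L^2(\R^d)$, one concludes $\lambda_k=0$ for every $k$. This yields the claimed linear independence.

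There is essentially no obstacle here: the only subtlety worth flagging in the write-up is the last observation, namely that the hypothesis that $\{w_j\}_{j\in\N}$ is a Hilbert basis of $L^2$ is used at two points — once implicitly to ensure $w\in L^2$ (so the zero-constant argument applies), and once to conclude that the $w_k$ themselves are linearly independent. The proof thus consists of three short steps and does not require any estimate beyond the observation that constants are not square-integrable on $\R^d$.
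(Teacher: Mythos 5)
Your proof is correct and follows essentially the same route as the paper: from $\sum_k\lambda_k\nabla w_k=0$ one deduces that $\sum_k\lambda_k w_k$ is a constant in $L^2(\R^d)$, hence zero, and then invokes the linear independence of the Hilbert basis. The paper's version is just a more compressed write-up of the same three steps.
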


\begin{proof}
    Let $\lambda_k \in \mathbb{C}$ such that $\sum_{k=0}^m \lambda_k
    \nabla w_k = 0$. Then $\nabla \psi = 0$, with $\psi \coloneqq
    \sum_{k=0}^m \lambda_k w_k \in H^1$. Therefore, $\psi = 0$, and we
    conclude by the fact that $\{ w_k \}$ is a  Hilbert basis of $L^2$.
\end{proof}

\begin{lem} \label{lem:appr_sol_global}
    The solution given by Lemma \ref{lem:exist_sol_app} is
    global. Moreover, it satisfies 
    \begin{equation*}
      \mathcal{E}_\textnormal{logGP} (u_m (t)) =
      \mathcal{E}_\textnormal{logGP} (u_0), \quad\text{and}
      \quad\norm{\nabla \varphi_m (t)}_{L^2} \leq 2
      \sqrt{\mathcal{E}_\textnormal{logGP} (u_0)}\quad \text{for all
      }t \in \R.
    \end{equation*}
\end{lem}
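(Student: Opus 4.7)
The plan is to test the Galerkin equation \eqref{eq:eq_u_m} against $\partial_t u_m$ in order to derive conservation of $\mathcal{E}_\textnormal{logGP}(u_m)$; extract from this a uniform $H^1$ bound on $\varphi_m$; transfer the bound to the coefficients $g_{m,k}$ via Lemmas~\ref{lem:nabla_lin_indep} and~\ref{lem:equiv_norm_finite_dim}; and finally invoke the blow-up criterion of Lemma~\ref{lem:exist_sol_app}.

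Since $\partial_t u_m = \partial_t \varphi_m = \sum_{k=0}^m \dot g_{m,k}(t)\, w_k \in X_m$, taking the $\mathbb C$-linear combination of the identities in \eqref{eq:eq_u_m} with coefficients $\dot g_{m,k}(t)$ in the first slot yields
\begin{equation*}
\int_{\mathbb R^d} \partial_t \varphi_m \,\overline{\bigl(i\partial_t u_m + \Delta u_m - \lambda u_m \ln|u_m|^2\bigr)}\, dx = 0.
\end{equation*}
Taking the real part, the term arising from $i\partial_t u_m$ is purely imaginary and drops out; integration by parts together with $\nabla\partial_t\varphi_m = \partial_t\nabla u_m$ turns the Laplacian term into $-\tfrac12\tfrac{d}{dt}\|\nabla u_m\|_{L^2}^2$; and, using $\Re(\overline{u_m}\partial_t u_m) = \tfrac12\partial_t|u_m|^2$ together with the primitive identity $F'(y)=\ln y$ for $F(y)=y\ln y - y + 1$, the nonlinear term becomes $-\tfrac{\lambda}{2}\tfrac{d}{dt}\int_{\mathbb R^d} F(|u_m|^2)\, dx$. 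Summing gives $\tfrac{d}{dt}\mathcal{E}_\textnormal{logGP}(u_m) = 0$, hence the claimed conservation. The uniform bound on $\|\nabla\varphi_m\|_{L^2}$ is then immediate from $\|\nabla u_m\|_{L^2}^2 \le \mathcal{E}_\textnormal{logGP}(u_0)$ and the triangle inequality, since $\nabla u_m = \nabla u_0 + \nabla\varphi_m$.

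For globality, Lemma~\ref{lem:nabla_lin_indep} ensures that $(\nabla w_k)_{0\le k\le m}$ is a linearly independent family in $L^2$, so Lemma~\ref{lem:equiv_norm_finite_dim} provides a constant $C_m>0$ with
\begin{equation*}
\max_{0\le k\le m}|g_{m,k}(t)| \le C_m \,\|\nabla\varphi_m(t)\|_{L^2} \le 2C_m\sqrt{\mathcal{E}_\textnormal{logGP}(u_0)}
\end{equation*}
on the whole maximal interval $(-T_m, T^m)$. This uniform bound rules out the blow-up alternative stated in Lemma~\ref{lem:exist_sol_app}, forcing $T_m=T^m=+\infty$.

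The main technical obstacle is the rigorous justification of the identity $\tfrac{d}{dt}\int F(|u_m|^2)\, dx = \int \ln|u_m|^2\, \partial_t|u_m|^2\, dx$: the logarithm is singular at the origin, and since $u_0$ only lies in $E_\textnormal{logGP}$ the function $|u_m|^2$ need not be in $L^1$. I would dispose of this by regularizing $F$ via $F_\delta(y) = (y+\delta)\ln(y+\delta) - y$ (so that $F_\delta'(y)=\ln(y+\delta)$ is bounded near $0$ and $F_\delta\to F$ pointwise), applying the now smooth chain rule, and passing to the limit $\delta\to 0^+$ by dominated convergence. The required domination follows from $|y\ln y|\lesssim 1+y^2$ near $y=0$ together with the integrability of $|u_m|-1$ provided by $u_m\in E_\textnormal{logGP}$ (Lemma~\ref{lem:E_logGP_descr}, combined with Lemma~\ref{lem:Energy_plus_H1}), and the fact that $\partial_t\varphi_m$ belongs to the fixed finite-dimensional space $X_m$ and is locally bounded in $t$.
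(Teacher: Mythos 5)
Your proof is correct and follows essentially the same route as the paper: test \eqref{eq:eq_u_m} against $\partial_t u_m$ to get $\frac{\diff}{\diff t}\mathcal{E}_\textnormal{logGP}(u_m)=0$, deduce the uniform bound on $\norm{\nabla\varphi_m}_{L^2}$, and combine Lemmas~\ref{lem:nabla_lin_indep} and \ref{lem:equiv_norm_finite_dim} with the blow-up alternative of Lemma~\ref{lem:exist_sol_app}. Your supplementary regularization of the chain rule (which the paper omits) is a reasonable idea, but note that $F_\delta(y)=(y+\delta)\ln(y+\delta)-y$ does not vanish at $y=1$, so $\int F_\delta(\abs{u_m}^2)\diff x$ diverges over $\R^d$; one must subtract the affine correction $\(1+\ln(1+\delta)\)(y-1)+(1+\delta)\ln(1+\delta)$, exactly as in the paper's remark on the regularized Gross--Pitaevskii energy.
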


\begin{proof}
    For this, we only have to prove that all the $g_{m, k}$s are (locally) bounded.
    Come back to \eqref{eq:eq_u_m}: by multiplying by $\dot{g}_{m, j}$
    and summing over $j \leq m$, we get, in view of \eqref{eq:form_u_m},
    \begin{equation*}
        \< \partial_t u_m, i \partial_t u_m +\Delta u_m - \lambda u_m \ln{\abs{u_m}^2} \> = 0.
    \end{equation*}
    By taking the real part of this equation, we obtain
    \begin{equation*}
        \frac{\diff}{\diff t} \mathcal{E}_\textnormal{logGP} (u_m (t)) = 0.
    \end{equation*}
    Therefore, $\mathcal{E}_\textnormal{logGP} (u_m (t)) =
    \mathcal{E}_\textnormal{logGP} (u_0)$ for all $t \in (- T_m,
    T^m)$. In particular, $\norm{\nabla u_m (t)}_{L^2}$ is uniformly
    bounded, and thus so is $\norm{\nabla \varphi_m (t)}_{L^2}$: for $
    t \in (- T_m, T^m)$, 
    \begin{align*}
        \norm{\nabla \varphi_m (t)}_{L^2} & \le 
\norm{\nabla u_0 }_{L^2} + \norm{\nabla u_m(t) }_{L^2}
\le \norm{\nabla u_0 }_{L^2} + \sqrt{\mathcal{E}_\textnormal{logGP}
 (u_m(t))} \\
&\le 2 \sqrt{\mathcal{E}_\textnormal{logGP} (u_0)}. 
    \end{align*}
    We know that $\nabla \varphi_m (t) = \sum_{k=0}^m g_{m, k} (t) \nabla w_k$, thus Lemma~\ref{lem:nabla_lin_indep} shows that we can apply Lemma \ref{lem:equiv_norm_finite_dim}, which gives
    \begin{equation*}
        \max_{0\le k\le m} \abs{g_{m,k} (t)} \leq C_m \norm{\nabla
          \varphi_m (t)}_{L^2} \le 2C_m \sqrt{\mathcal{E}_\textnormal{logGP} (u_0)} < \infty, \qquad \forall t \in (- T_m, T^m).
    \end{equation*}
    This proves that the $g_{m,k}$'s are actually globally bounded and therefore the solution is global.
\end{proof}

\subsection{Uniform estimates}

We have already shown the conservation of the energy and an estimate of the $L^2$-norm of $\nabla\varphi_m$, uniform both in $t$ and $m$. Now, we proceed with its $L^2$-norm.

\begin{lem} \label{lem:appr_L2_norm}
    Let $u_m = u_0 + \varphi_m$ the solution to \eqref{eq:eq_u_m}
    given by Lemma \ref{lem:exist_sol_app}. Then $\varphi_m$ is
    bounded in $\mathcal{C}^{0, \frac{1}{2}} (I, L^2)$ uniformly in
    $m$, for every bounded interval $I$. 
\end{lem}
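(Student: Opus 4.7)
The key observation is that, while the $L^2$-orthogonal projection onto $X_m$ does not in general control the $H^1$ norm, the increment $\varphi_m(t)-\varphi_m(s)$ itself lies in $X_m$ and is therefore a legitimate test function for the Galerkin identity \eqref{eq:eq_u_m}. My plan is to test the integrated equation over $[s,t]$ against this increment and to exploit the fact that the resulting diagonal term is purely imaginary.

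A routine manipulation of \eqref{eq:eq_u_m} (multiplying by the coefficients of $\psi$, summing, and integrating by parts) shows that, for any $\psi\in X_m$,
\begin{equation*}
i\langle\partial_\tau\varphi_m(\tau),\psi\rangle=\langle\nabla u_m(\tau),\nabla\psi\rangle+\lambda\langle u_m(\tau)\ln|u_m(\tau)|^2,\psi\rangle
\end{equation*}
pointwise in $\tau$. Choosing $\psi=\varphi_m(t)-\varphi_m(s)\in X_m$, which does not depend on $\tau$, and integrating in $\tau$ from $s$ to $t$, the left-hand side becomes $i\|\varphi_m(t)-\varphi_m(s)\|_{L^2}^2$, which is purely imaginary. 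Taking moduli yields
\begin{equation*}
\|\varphi_m(t)-\varphi_m(s)\|_{L^2}^2\le|t-s|\Bigl(A_m\,\|\nabla(\varphi_m(t)-\varphi_m(s))\|_{L^2}+\lambda B_m\,\|\varphi_m(t)-\varphi_m(s)\|_{L^2}\Bigr),
\end{equation*}
with $A_m=\sup_\tau\|\nabla u_m(\tau)\|_{L^2}$ and $B_m=\sup_\tau\|u_m(\tau)\ln|u_m(\tau)|^2\|_{L^2}$.

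Lemma~\ref{lem:appr_sol_global} gives bounds on $A_m$ and on $\|\nabla(\varphi_m(t)-\varphi_m(s))\|_{L^2}$ that are uniform in $m$ and $\tau$; the conservation of $\mathcal{E}_\textnormal{logGP}(u_m)$ together with Corollary~\ref{cor:v_ln_v_Lp} (applied with $p=2$ and $\delta=1$) yields a corresponding uniform bound on $B_m$. Writing $D=\|\varphi_m(t)-\varphi_m(s)\|_{L^2}$, the previous inequality then takes the form $D^2\le C_1|t-s|+C_2|t-s|D$, and solving this scalar quadratic inequality gives $D\le C|t-s|^{1/2}$ on any bounded interval $I$. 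The full $\mathcal{C}^{0,1/2}(I,L^2)$-bound follows at once since $\varphi_m(0)=0$.

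The only mild difficulty is the algebraic bookkeeping — checking that after substitution and integration the left-hand side really is purely imaginary — but no analytical obstacle arises and no further assumption on the basis $\{w_j\}$ is needed at this stage.
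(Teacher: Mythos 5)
Your proof is correct, and it organizes the argument differently from the paper. The paper proceeds in two stages: first a Gronwall argument (testing \eqref{eq:eq_u_m} against $\varphi_m(t)$ and taking imaginary parts) to get a uniform-in-$m$ bound on $\norm{\varphi_m(t)}_{L^2}$ over bounded intervals, and then the H\"older estimate via the identity $\norm{\varphi_m(t)-\varphi_m(s)}_{L^2}^2=2\int_s^t\Re\<\varphi_m(\tau)-\varphi_m(s),\d_t\varphi_m(\tau)\>_{H^1,H^{-1}}\diff\tau$, where the integrand is rewritten using the \emph{time-dependent} test function $\varphi_m(\tau)-\varphi_m(s)\in X_m$ and bounded by an $H^1\times H^{-1}$ duality pairing against $\xi_m(\tau)$ — a step that consumes the $L^2$ bound from the first stage. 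You instead test against the \emph{fixed} increment $\varphi_m(t)-\varphi_m(s)$ and integrate in $\tau$, so the time-derivative term integrates exactly to $i\norm{\varphi_m(t)-\varphi_m(s)}_{L^2}^2$, and the remaining terms are controlled directly by the gradient bound from Lemma~\ref{lem:appr_sol_global} and the $L^2$ bound on $u_m\ln\abs{u_m}^2$ from Corollary~\ref{cor:v_ln_v_Lp} (both uniform by energy conservation). This closes a self-contained quadratic inequality $D^2\le C_1\abs{t-s}+C_2\abs{t-s}\,D$, which yields $D\lesssim\abs{t-s}^{1/2}$ on bounded intervals; together with $\varphi_m(0)=0$ this gives both the sup bound and the H\"older seminorm at once. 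Your route is slightly leaner: it dispenses with the preliminary Gronwall step and with the $H^{-1}$ duality pairing altogether. The only point worth stating explicitly in a write-up is the conjugation bookkeeping when passing from the family of identities indexed by $w_j$ to the identity tested against an arbitrary $\psi\in X_m$ (one multiplies by the conjugates of the coordinates of $\psi$), but this is the same routine manipulation the paper itself performs.
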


\begin{proof}
    Since all $g_{m, k}$'s are continuous, we already know that
    $\varphi_m $ is continuous in time with values in   $L^2$.
    By multiplying \eqref{eq:eq_u_m} by $g_{m, j} (t)$ and summing over $j$, we obtain
    \begin{equation*}
        \< \varphi_m, i \partial_t \varphi_m + \Delta u_0 + \Delta
        \varphi_m - \lambda u_m \ln{\abs{u_m}^2} \>_{H^1, H^{-1}} = 0, 
    \end{equation*}
    and the imaginary part gives
    \begin{equation*}
        \frac{1}{2} \frac{\diff}{\diff t} \norm{\varphi_m}_{L^2}^2 =
        \Im \< \nabla \varphi_m, \nabla u_0 \> + \lambda \Im\<
        \varphi_m, u_m \ln{\abs{u_m}^2} \>, 
    \end{equation*}
    since $\Im \< \varphi_m, \Delta \varphi_m \>_{H^1, H^{-1}} =
    0$. Then, we can estimate the right-hand side : 
    \begin{equation*}
        \abs{\< \nabla \varphi_m, \nabla u_0 \>} \leq \norm{\nabla
          \varphi_m}_{L^2} \norm{\nabla u_0}_{L^2} \leq 2
        \mathcal{E}_\textnormal{logGP}(u_0), 
    \end{equation*}
    thanks to Lemma \ref{lem:appr_sol_global}, and
    \begin{align*}
        \abs{\< \varphi_m, u_m \ln{\abs{u_m}^2} \>}& \leq
\norm{\varphi_m}_{L^2}\norm{u_m \ln{\abs{u_m}^2}}_{L^2}
      \\
&\lesssim \norm{\varphi_m}_{L^2} \(\mathcal{E}_\textnormal{logGP}
           (u_0)^{1/2} + \mathcal{E}_\textnormal{logGP} (u_0)\), 
    \end{align*}
    by Corollary \ref{cor:v_ln_v_Lp}. Since $\varphi_m (0) = 0$, Gronwall lemma gives the uniform boundedness of the $L^2$ norm on every bounded interval $I$. Define
    \begin{equation*}
        \xi_m (t) \coloneqq -  \Delta \varphi_m -  \Delta u_0 + \lambda u_m \ln{\abs{u_m}^2}.
    \end{equation*}
    Then $\xi_m$ is bounded in $L^\infty (I, H^{-1})$ uniformly in $m$ for every bounded interval $I$. Moreover, for every $t, s \in I$,
    \begin{equation} \label{eq:int_diff_L2_norm}
        \norm{\varphi_m (t) - \varphi_m (s)}_{L^2}^2 = 2 \int_s^t \< \varphi_m (\tau) - \varphi_m (s), \partial_t \varphi_m (\tau) \>_{H^1, H^{-1}} \diff \tau.
    \end{equation}
    By multiplying again \eqref{eq:eq_u_m} at time $\tau$ by $g_{m, j} (\tau) - g_{m, j} (s)$ and summing over $j$, there holds
    \begin{equation*}
        \< \varphi_m (\tau) - \varphi_m (s), \partial_t \varphi_m (\tau) \>_{H^1, H^{-1}} = \< \varphi_m (\tau) - \varphi_m (s), \xi_m (\tau) \>_{H^1, H^{-1}}.
    \end{equation*}
    Since we know that the $H^1$-norms of $\varphi_m (\tau)$ and
    $\varphi_m (s)$ are bounded uniformly in $m$ for $\tau, s \in I$,
    we get that $\abs{\< \varphi_m (\tau) - \varphi_m (s), \xi_m
      (\tau) \>_{H^1, H^{-1}}}$ is bounded uniformly in $m \in
    \mathbb{N}$, for $s, \tau \in I$. Then, we get by \eqref{eq:int_diff_L2_norm}
    \begin{equation*}
        \norm{\varphi_m (t) - \varphi_m (s)}_{L^2}^2 \leq C_I \abs{t - s},
    \end{equation*}
    for all $t, s \in I$, with $C_I$ depending on $I$ but not on $m$, hence the conclusion.
\end{proof}

\subsection{Convergence}

\begin{lem} \label{lem:convergence}
    Let $u_m = u_0 + \varphi_m$ the solution to \eqref{eq:eq_u_m}
    given by Lemma~\ref{lem:exist_sol_app}. There exists a subsequence
    of $(\varphi_m)_m$ (still denoted by $\varphi_m$) and 
    \begin{equation*}
     \varphi \in
    \mathcal{C}^{0, 1} (\mathbb{R}, H^{-1}) \cap \mathcal{C}^{0,
      \frac{1}{2}} (\mathbb{R}, L^2) \cap L^\infty_\textnormal{loc}
    (\mathbb{R}, H^1) 
    \end{equation*}
such that $\varphi_m$ converges to $\varphi$ as
    $m \to \infty$ in the following sense: 
    \begin{itemize}
        \item $\varphi_m \wsconv \varphi$ in $L^\infty (I, H^1)$ for its weak-$*$ topology, for every bounded interval $I$,
        \item $\varphi_m (t) \rightharpoonup \varphi (t)$ in $L^2$ for its weak topology, for every $t \in \mathbb{R}$.
    \end{itemize}
    Moreover, $\lambda u_m \ln{\abs{u_m}^2}$ converges to $
    i\partial_t \varphi +  \Delta  \varphi + \Delta u_0 $ for the weak-$*$ topology of $L^\infty (\mathbb{R}, L^2)$.
\end{lem}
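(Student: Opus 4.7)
The plan is a standard compactness argument: extract a weak-$*$ convergent subsequence using Banach-Alaoglu together with a diagonal extraction, then identify the limit of the nonlinear term by passing to the limit in the Galerkin relation \eqref{eq:eq_u_m} tested against basis functions.

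\textbf{Step 1 (Extraction in $H^1$).} Lemma~\ref{lem:appr_sol_global} combined with Lemma~\ref{lem:appr_L2_norm} furnishes a bound of $\varphi_m$ in $L^\infty(I, H^1)$, uniform in $m$, for every bounded interval $I$. Applying Banach-Alaoglu on each interval $I_n = [-n, n]$ and diagonalizing in $n$, I extract a subsequence (still denoted $\varphi_m$) converging weak-$*$ in $L^\infty(I, H^1)$ to some $\varphi \in L^\infty_{\textnormal{loc}}(\mathbb{R}, H^1)$, for every bounded $I$.

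\textbf{Step 2 (Pointwise weak $L^2$ convergence).} The uniform $\mathcal{C}^{0, 1/2}(I, L^2)$ estimate from Lemma~\ref{lem:appr_L2_norm} provides equicontinuity of $\varphi_m(\cdot)$ in $L^2$. Choosing a countable dense subset $\{t_k\} \subset \mathbb{R}$, a further diagonal extraction (using that $\{\varphi_m(t_k)\}_m$ is bounded in $H^1$ for each $k$) delivers weak $L^2$ convergence of $\varphi_m(t_k)$ at every $t_k$. For any $\psi \in L^2$ and any $t \in I$, the estimate
\begin{equation*}
|\< \varphi_m(t) - \varphi_{m'}(t), \psi\>| \leq |\< \varphi_m(t_k) - \varphi_{m'}(t_k), \psi\>| + 2 C_I |t - t_k|^{1/2}\|\psi\|_{L^2}
\end{equation*}
shows that $\< \varphi_m(t), \psi\>$ is Cauchy as $m \to \infty$, yielding pointwise weak $L^2$ convergence at every $t$. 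One identifies the resulting limit with $\varphi(t)$ for a.e.~$t$, and after redefinition on a null set in time, $\varphi \in \mathcal{C}^{0, 1/2}(\mathbb{R}, L^2)$ with $\varphi_m(t) \rightharpoonup \varphi(t)$ in $L^2$ at every $t$.

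\textbf{Step 3 (Identification of the nonlinear limit).} Corollary~\ref{cor:v_ln_v_Lp} applied to $u_m = u_0 + \varphi_m$, together with the uniform energy bound, implies that $\lambda u_m \ln|u_m|^2$ is bounded in $L^\infty_{\textnormal{loc}}(\mathbb{R}, L^2)$; a final extraction yields a weak-$*$ limit $\eta \in L^\infty_{\textnormal{loc}}(\mathbb{R}, L^2)$. To identify $\eta$, I multiply \eqref{eq:eq_u_m} by a test function $\chi(t) \in \mathcal{C}_0^\infty(\mathbb{R})$ and integrate in time: for fixed $j$ and all $m \geq j$,
\begin{equation*}
\int_\mathbb{R} \chi(t) \< w_j, i\partial_t u_m + \Delta u_m - \lambda u_m \ln|u_m|^2\>\diff t = 0.
\end{equation*}
Integrating by parts in $t$ on the first term and writing the second as $-\< \nabla w_j, \nabla u_m\>$, the linear contributions pass to the limit thanks to Steps 1 and 2, while the nonlinear term passes to the limit by weak-$*$ convergence in $L^\infty_{\textnormal{loc}}(\mathbb{R}, L^2)$. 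Density of $\{w_j\}$ in $L^2$ then yields, in the sense of distributions, $\lambda \eta = i\partial_t \varphi + \Delta u_0 + \Delta \varphi$. The identity also shows $\partial_t \varphi \in L^\infty_{\textnormal{loc}}(\mathbb{R}, H^{-1})$, hence $\varphi \in \mathcal{C}^{0, 1}(\mathbb{R}, H^{-1})$, completing the regularity assertions.

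The main subtlety is to reconcile three modes of convergence: pointwise weak $L^2$ convergence is needed to pair $\varphi_m(t)$ with a fixed basis function $w_j$ at every $t$, while time-integrated weak-$*$ convergence is needed to pass to the limit in the distributional equation. Propagating the pointwise weak convergence from a dense time set to every $t$ via the uniform Hölder estimate is the technical heart of the argument.
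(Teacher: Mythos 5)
Your proposal is correct, and its overall skeleton coincides with the paper's: uniform bounds from Lemmas~\ref{lem:appr_sol_global} and \ref{lem:appr_L2_norm}, weak-$*$ extraction in $L^\infty(I,H^1)$ by diagonalization over intervals, identification of the weak-$*$ limit of $\lambda u_m\ln|u_m|^2$ by testing \eqref{eq:eq_u_m} against $\theta(t)w_j$ and using the density of the Hilbert basis, and then reading off $\partial_t\varphi\in L^\infty_{\textnormal{loc}}(\R,H^{-1})$ from the equation. The one step you handle genuinely differently is the pointwise weak $L^2$ convergence at \emph{every} $t$. You run an Arzel\`a--Ascoli argument in the weak topology: a diagonal extraction over a countable dense set of times, followed by propagation to all $t$ via the uniform $\mathcal C^{0,1/2}(I,L^2)$ bound, and you then obtain the $\mathcal C^{0,1/2}(\R,L^2)$ regularity of $\varphi$ directly from weak lower semicontinuity of the $L^2$ norm. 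The paper instead fixes $t$, takes an arbitrary weak limit point $\chi$ of $\varphi_m(t)$, and identifies $\chi=\varphi(t)$ by averaging $\<\varphi(t)-\chi,\varphi(\tau)-\varphi_m(\tau)\>$ over $[t-\gamma,t+\gamma]$ and letting $\gamma\to0$; the $\mathcal C^{0,1/2}(\R,L^2)$ regularity is there deduced by interpolating $\mathcal C^{0,1}(\R,H^{-1})$ (from the equation) against $L^\infty_{\textnormal{loc}}(\R,H^1)$. Both routes are valid; yours avoids needing the equation before establishing the $L^2$ continuity, at the price of one more diagonal extraction and of the (correct but only sketched) identification of the pointwise limit with the weak-$*$ limit a.e.\ in $t$, for which you should test against $\theta(t)\psi$ with $\psi$ ranging over a countable dense subset of $L^2$ and use dominated convergence. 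Two minor points: the bound on $u_m\ln|u_m|^2$ is in fact global in time, $L^\infty(\R,L^2)$, since the energy of $u_m$ is conserved on all of $\R$, which is what gives the weak-$*$ convergence in $L^\infty(\R,L^2)$ as stated rather than only locally; and after redefining $\varphi$ on a null set you should note that the pointwise values still lie in $H^1$ with the uniform bound, again by weak lower semicontinuity.
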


\begin{proof}
    We know by Lemmas \ref{lem:appr_sol_global} and
    \ref{lem:appr_L2_norm} that $\varphi_m$ is uniformly bounded in
    $L^\infty (I, H^1)$, which is the dual of $L^1 (I, H^{-1})$, for
    every bounded interval $I$. Then, this sequence is relatively
    compact for the weak-$*$ topology of $L^\infty (I, H^1)$. By diagonal
    extraction, there is a subsequence (still denoted by $\varphi_m$) which converges to some $\varphi \in L^\infty_\textnormal{loc} (\mathbb{R}, H^1)$ for the weak-$*$ topology of $L^\infty (I, H^1)$, for every bounded interval $I$.

    Besides, $u_m \ln{\abs{u_m}^2}$ is also uniformly bounded in $L^\infty (\mathbb{R}, L^2) = \Bigl( L^1 (\mathbb{R}, L^2) \Bigr)'$ thanks to Corollary \ref{cor:v_ln_v_Lp} and the conservation of the energy. Therefore, it is also relatively compact for the weak-$*$ topology of $L^\infty (\mathbb{R}, L^2)$. 

    Let $\theta \in \mathcal{C}_c^\infty (\mathbb{R})$ and  $j
    \le m$:
    \begin{align*}
        \lambda \int \theta (\tau) \< w_j, u_m \ln{\abs{u_m}^2} \>
      \diff \tau 
            &= \int \dot{\theta} (\tau) \< w_j, i \varphi_m (\tau) \>
              \diff \tau - \int \theta (t) \< \nabla w_j, \nabla
              \varphi_m (\tau) \> \diff \tau \\
&\quad - \int \theta (t) \<
              \nabla w_j, \nabla u_0 \> \diff \tau \\ 
            &\underset{m \rightarrow \infty}{\longrightarrow} \int
              \dot{\theta} (\tau) \< w_j, i \varphi (\tau) \> \diff
              \tau - \int \theta (t) \< \nabla w_j, \nabla \varphi
              (\tau) \> \diff \tau \\
&\qquad -\int \theta (t) \< \nabla w_j,
              \nabla u_0 \> \diff \tau. 
    \end{align*}
    Since $(w_j)_j$ is a Hilbert basis of $L^2$, this proves that
    $\lambda u_m \ln{\abs{u_m}^2}$ also converges to $i\partial_t
    \varphi + \Delta  \varphi + \Delta u_0$ in $\mathcal{D}'
    (\mathbb{R}, L^2)$. Therefore, the convergence is also in the
    weak-$*$ topology of $L^\infty (\mathbb{R}, L^2)$, thus we get
    $i\partial_t \varphi + \Delta  \varphi + \Delta u_0\in L^\infty
    (\mathbb{R}, L^2)$.  

    From this and the fact that $\Delta \varphi \in
    L^\infty_\textnormal{loc} (\mathbb{R}, H^{-1})$, we get
    $\partial_t \varphi \in L^\infty_\textnormal{loc} (\mathbb{R},
    H^{-1})$. Therefore, $\varphi \in W^{1, \infty}_\textnormal{loc}
    (\mathbb{R}, H^{-1})$ and thus $\varphi \in \mathcal C^{0,
      1}_\textnormal{loc} (\mathbb{R}, H^{-1}) \cap
    L^\infty_\textnormal{loc} (\mathbb{R}, H^1)$. By interpolation,
    this leads to $\varphi \in \mathcal C^{0, \frac{1}{2}}_\textnormal{loc}
    (\mathbb{R}, L^2)$. 

    As for the convergence in the second item, let $t \in
    \mathbb{R}$. Since $\varphi_m (t)$ is bounded in $L^2$ uniformly
    in $m$, the sequence is relatively compact for the weak topology
    of $L^2$. Let $\chi$ be the limit of a subsequence (still denoted
    by $\varphi_m$). It is then enough to prove that $\chi = \varphi
    (t)$. Let $\gamma \in (0, 1)$ Then, 
    \begin{align*}
        \norm{\varphi (t) - \chi}_{L^2}^2 &= \< \varphi (t) - \chi, \varphi_m (t) - \chi \> + \frac{1}{2 \gamma} \int_{t - \gamma}^{t + \gamma} \< \varphi (t) - \chi, \varphi (t) - \varphi_m (t) \> \diff \tau \\
            &= \< \varphi (t) - \chi, \varphi_m (t) - \chi \> \\
&+ \frac{1}{2 \gamma} \int_{t - \gamma}^{t + \gamma} \< \varphi (t) - \chi, \varphi (t) - \varphi (\tau) + \varphi (\tau) - \varphi_m (\tau) + \varphi_m (\tau) - \varphi_m (t) \> \diff \tau.
    \end{align*}
    Since $\varphi, \varphi_m \in \mathcal{C}^{0, \frac{1}{2}} ([t-1,
    t+1], L^2)$ uniformly in $m$, we have for all $\tau \in [t-1,
    t+1]$ 
    \begin{align*}
        \abs{\< \varphi (t) - \chi, \varphi (t) - \varphi (\tau) \>}
      &\leq C_t \abs{t - \tau}^\frac{1}{2} \norm{\varphi (t) -
        \chi}_{L^2}, \\ 
        \abs{\< \varphi (t) - \chi, \varphi_m (t) - \varphi_m (\tau)
      \>} &\leq C_t \abs{t - \tau}^\frac{1}{2} \norm{\varphi (t) -
            \chi}_{L^2}, \\ 
    \end{align*}
    hence
    \begin{equation}\label{eq:est_conv_phi_t} 
      \begin{aligned}
   \norm{\varphi (t) - \chi}_{L^2}^2 &\leq \< \varphi (t) - \chi,
  \varphi_m (t) - \chi \>+ \frac{C_t}{2 \gamma} \norm{\varphi (t) - \chi}_{L^2} \int_{t -
    \gamma}^{t+\gamma} \abs{t - \tau}^\frac{1}{2} \diff
   \tau \\
& \quad+\frac{1}{2 \gamma} \int_{t
    - \gamma}^{t + \gamma} \< \varphi (t) - \chi,
   \varphi (\tau) - \varphi_m (\tau) \> \diff \tau
     \\ 
   &\leq \< \varphi (t) - \chi, \varphi_m (t) - \chi \> + C_t \gamma^\frac{1}{2} \norm{\varphi (t) -
              \chi}_{L^2}\\
&\quad+ \frac{1}{2 \gamma} \int_{t - \gamma}^{t + \gamma} \<
              \varphi (t) - \chi, \varphi (\tau) - \varphi_m (\tau) \>
              \diff \tau .       
      \end{aligned}
    \end{equation}
    Moreover, we know that
    \begin{itemize}
        \item $\< \varphi (t) - \chi, \varphi_m (t) - \chi \> \underset{m \rightarrow \infty}{\longrightarrow} 0$ by weak convergence in $L^2$ of $\varphi_m (t)$ to $\chi$,
        \item $\int_{t - \gamma}^{t + \gamma} \< \varphi (t) - \chi, \varphi (\tau) - \varphi_m (\tau) \> \diff \tau \underset{m \rightarrow \infty}{\longrightarrow} 0$ by the weak-$*$ convergence in $L^\infty ((t - 1, t + 1), L^2)$ of $\varphi_m$ to $\varphi$.
    \end{itemize}
    Therefore, taking the limit $m \rightarrow \infty$ in \eqref{eq:est_conv_phi_t}, we get
    \begin{equation*}
        \norm{\varphi (t) - \chi}_{L^2}^2 \leq C_t \gamma^\frac{1}{2} \norm{\varphi (t) - \chi}_{L^2}.
    \end{equation*}
    Since this is true for all $\gamma \in (0, 1)$, we get
    $\norm{\varphi (t) - \chi}_{L^2}=0$, hence the conclusion.
\end{proof}

\subsection{Equation and initial data for the limit}

We are now able to prove that $u = u_0 + \varphi$ is indeed a
solution to \eqref{logGP}.

\begin{lem}
    Let $\varphi \in \mathcal{C}^{0, 1} (\mathbb{R}, H^{-1}) \cap
    \mathcal{C}^{0, \frac{1}{2}} (\mathbb{R}, L^2) \cap
    L^\infty_\textnormal{loc} (\mathbb{R}, H^1)$ defined in
    Lemma~\ref{lem:convergence}. Then $u \coloneqq u_0 + \varphi$
    is a weak solution to \eqref{logGP}. 
\end{lem}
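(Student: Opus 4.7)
The plan is to show three things for $u := u_0 + \varphi$: the weak initial condition $u(0) = u_0$, the membership $u \in L^\infty_{\textnormal{loc}}(\mathbb{R}, E_{\textnormal{logGP}})$, and the equation $i\partial_t u + \Delta u = \lambda u \ln|u|^2$ in $\mathcal{D}'(\mathbb{R}\times\mathbb{R}^d)$. The first two are more or less direct; the real work is identifying the weak-$*$ limit produced in Lemma~\ref{lem:convergence} with the nonlinear term evaluated at the limit.

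For the initial condition, I would use the pointwise weak convergence $\varphi_m(t) \rightharpoonup \varphi(t)$ in $L^2$ at $t=0$, together with $\varphi_m(0)=0$, to conclude $\varphi(0)=0$, i.e.\ $u(0)=u_0$. Since $\varphi \in \mathcal{C}^{0,1/2}(\mathbb{R}, L^2)$, we in fact have $\varphi(t) \to 0$ strongly in $L^2$ as $t\to 0$, which yields the required continuity against any $\psi \in \mathcal{C}_0^\infty(\mathbb{R}^d)$. For the membership in the energy space, $\nabla u = \nabla u_0 + \nabla\varphi \in L^\infty_{\textnormal{loc}}(\mathbb{R}, L^2)$ is immediate, and Lemma~\ref{lem:Energy_plus_H1} applied with $v = u_0 \in E_{\textnormal{logGP}}$ and $f = \varphi(t) \in H^1$ gives $u(t) \in E_{\textnormal{logGP}}$ together with a locally uniform bound on $\mathcal{E}_{\textnormal{logGP}}(u(t))$.

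The heart of the argument is passing to the limit in the nonlinear term. Lemma~\ref{lem:convergence} tells us that $\lambda u_m \ln|u_m|^2$ converges weakly-$*$ in $L^\infty(\mathbb{R}, L^2)$ to $i\partial_t\varphi + \Delta\varphi + \Delta u_0$, but I must show this limit equals $\lambda u \ln|u|^2$. The strategy is an Aubin--Lions / Arzel\`a--Ascoli argument: by Lemma~\ref{lem:appr_sol_global}, $\varphi_m$ is bounded in $L^\infty_{\textnormal{loc}}(\mathbb{R}, H^1)$, and by Lemma~\ref{lem:appr_L2_norm} it is equicontinuous as a map into $L^2(\mathbb{R}^d)$ on every bounded interval. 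Fix any bounded time interval $I$ and any bounded spatial domain $\Omega \subset \mathbb{R}^d$. By Rellich, the $H^1$ ball is compact in $L^2(\Omega)$, so pointwise-in-$t$ relative compactness in $L^2(\Omega)$ combined with equicontinuity yields, via Arzel\`a--Ascoli, strong convergence of a subsequence in $\mathcal{C}^0(I, L^2(\Omega))$. A diagonal extraction produces strong convergence of $\varphi_m$ to $\varphi$ in $L^2_{\textnormal{loc}}(\mathbb{R}\times\mathbb{R}^d)$, and a further subsequence converges almost everywhere on $\mathbb{R}\times\mathbb{R}^d$.

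The conclusion then follows from continuity of $z \mapsto z\ln|z|^2$ on $\mathbb{C}$ (extended by $0$ at the origin): $u_m \to u$ a.e.\ implies $u_m \ln|u_m|^2 \to u \ln|u|^2$ a.e. Combined with the uniform bound in $L^\infty_{\textnormal{loc}}(\mathbb{R}, L^2)$ provided by Corollary~\ref{cor:v_ln_v_Lp} and conservation of energy, Fatou's lemma gives $u\ln|u|^2 \in L^\infty_{\textnormal{loc}}(\mathbb{R}, L^2)$, and a standard argument (a.e.\ convergence plus uniform $L^2$ bound on any bounded cylinder implies weak $L^2$ convergence, by uniqueness of the weak limit) identifies $u \ln|u|^2$ with the weak-$*$ limit from Lemma~\ref{lem:convergence}. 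Thus $i\partial_t u + \Delta u = i\partial_t\varphi + \Delta\varphi + \Delta u_0 = \lambda u \ln|u|^2$ in $\mathcal{D}'$. The main obstacle is precisely the nonlinear identification: weak convergence alone is useless for a nonlinearity that is not sequentially weakly continuous, and the singularity of the logarithm at the origin means we cannot rely on Lipschitz continuity of the nonlinearity. The Aubin--Lions compactness upgrading weak-$*$ to a.e.\ convergence, enabled by the Hölder-in-time estimate of Lemma~\ref{lem:appr_L2_norm}, is what makes the argument work.
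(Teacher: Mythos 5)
Your proposal is correct and follows essentially the same route as the paper: weak pointwise convergence at $t=0$ for the initial datum, then Arzel\`a--Ascoli on bounded cylinders $I\times\Omega$ (uniform H\"older continuity in time into $L^2(\Omega)$ plus the uniform $H^1$ bound giving pointwise relative compactness) to upgrade to strong $L^2(I\times\Omega)$ and then a.e.\ convergence, continuity of $z\mapsto z\ln|z|^2$, and identification of the weak limit of $u_m\ln|u_m|^2$ with $u\ln|u|^2$ via the uniform $L^2$ bound and uniqueness of weak limits. The additional checks you include (membership of $u(t)$ in the energy space via Lemma~\ref{lem:Energy_plus_H1}) are consistent with, though not spelled out in, the paper's proof.
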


\begin{proof}
    First, since $\varphi_m (0) \rightharpoonup \varphi (0)$ and
    $\varphi_m (0) = 0$ by construction, we get $\varphi (0) = 0$ and
    thus $u_{\mid t=0} = u_0$.

    We also know from Lemma~\ref{lem:convergence} that $\lambda u_m
    \ln{\abs{u_m}^2}$ converges to $i \partial_t \varphi +
     \Delta  \varphi + \Delta u_0 $ for the weak-$*$
    topology of $L^\infty (\mathbb{R}, L^2)$, and thus also for the
    weak topology of $L^2 (I \times \Omega)$ for every bounded
    interval $I$ and every bounded open subset $\Omega$ of $\mathbb{R}^d$. 

    We also know that $\varphi_m$ is $\frac{1}{2}$-H\"older continuous
    in time with values in $L^2 (\mathbb{R}^d)$ on $I$, uniformly in
    $m$ from Lemma \ref{lem:appr_L2_norm}, and thus also on $L^2
    (\Omega)$. Last, $\varphi_m (t) \in H^1$ for all $t \in I$ along
    with a uniform bound in $m$ by Lemmas \ref{lem:appr_sol_global}
    and \ref{lem:appr_L2_norm}. Thus, Arzela--Ascoli Theorem for
    ${\varphi_m}_{|\Omega}$ yields its relative compactness in
    $\mathcal{C} (I, L^2 (\Omega))$, which shows that
    ${\varphi_m}_{|\Omega}$ converges strongly to
    ${\varphi}_{|\Omega}$ in $\mathcal{C} (I, L^2 (\Omega))$, and thus
    also in $L^2 (I \times \Omega)$. Up to a further subsequence,
    $\varphi_m$ converges a.e. to $\varphi$ in $I \times \Omega$, and
    so does $u_m$ to $u$. By continuity of $z\mapsto z \ln{\abs{z}^2}$ on $\mathbb{C}$, we obtain the convergence of $u_m (t, x) \ln{\abs{u_m (t,x)}^2}$ to $u (t, x) \ln{\abs{u (t,x)}^2}$ a.e. in $I \times \Omega$.
    Besides, we also know that $u_m \ln{\abs{u_m}^2}$ is bounded in
    $L^\infty (I, L^2)$ uniformly in $m$, and thus also in $L^2 (I
    \times \Omega)$. Therefore, along this further subsequence, $u_m
    \ln{\abs{u_m}^2}$ converges weakly to $u \ln{\abs{u}^2}$ in $L^2 (I \times \Omega)$.

    Therefore, by uniqueness of the limit, we get $\lambda u
    \ln{\abs{u}^2} = i \partial_t \varphi +\Delta  \varphi + \Delta u_0 $, which gives the conclusion.
\end{proof}

\subsection{Conservation law}

We now prove that the energy of the solution that we have just
constructed is independent of time. Note that this is necessarily \emph{the}
solution to \eqref{logGP}, in view of Theorem~\ref{th:regu_sol}. 
\begin{lem}
    Let $\varphi \in \mathcal{C}^{0, 1} (\mathbb{R}, H^{-1}) \cap \mathcal{C}^{0, \frac{1}{2}} (\mathbb{R}, L^2) \cap L^\infty_\textnormal{loc} (\mathbb{R}, H^1)$ defined in Lemma~\ref{lem:convergence} and $u \coloneqq u_0 + \varphi$. Then $u$ satisfies $\mathcal{E}_\textnormal{logGP} (u (t)) = \mathcal{E}_\textnormal{logGP} (u_0)$ for all $t \in \mathbb{R}$.
\end{lem}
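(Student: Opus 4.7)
The plan is to combine lower semicontinuity of $\mathcal{E}_\textnormal{logGP}$ under the weak and a.e. limits obtained in Lemma~\ref{lem:convergence} with the uniqueness of weak solutions (Theorem~\ref{th:regu_sol}) applied to shifted initial data, so as to upgrade the natural one-sided inequality into an equality.

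First, I would fix $t\in\R$ and establish the lower semicontinuity estimate $\mathcal{E}_\textnormal{logGP}(u(t)) \leq \mathcal{E}_\textnormal{logGP}(u_0)$. From Lemma~\ref{lem:convergence} and the uniform $H^1$-bound of Lemmas~\ref{lem:appr_sol_global} and~\ref{lem:appr_L2_norm}, up to a subsequence depending on $t$ one has $\nabla u_m(t)\rightharpoonup \nabla u(t)$ in $L^2(\R^d)$ and, via Rellich's theorem, strong $L^2_\textnormal{loc}$ together with pointwise a.e. convergence of $\varphi_m(t)$ to $\varphi(t)$, hence of $u_m(t)$ to $u(t)$. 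Weak lower semicontinuity of the $L^2$-norm then handles the kinetic part, while the non-negativity of the integrand $F(y)=y\ln y -y+1$ permits Fatou's lemma on the potential part. Adding these two inequalities and using conservation of the approximate energy (Lemma~\ref{lem:appr_sol_global}) yields
\begin{equation*}
\mathcal{E}_\textnormal{logGP}(u(t)) \leq \liminf_{m\to\infty} \mathcal{E}_\textnormal{logGP}(u_m(t)) = \mathcal{E}_\textnormal{logGP}(u_0).
\end{equation*}

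For the reverse inequality, I would fix $t_0\in\R$. The previous step already ensures $u(t_0)\in E_\textnormal{logGP}$, so rerunning the construction of Section~\ref{sec:construction} with initial datum $u(t_0)$ produces a weak solution $\tilde u\in L^\infty_\textnormal{loc}(\R,E_\textnormal{logGP})$ satisfying $\mathcal{E}_\textnormal{logGP}(\tilde u(s)) \leq \mathcal{E}_\textnormal{logGP}(u(t_0))$ for every $s\in\R$. Autonomy of \eqref{logGP} shows that $s\mapsto u(t_0+s)$ is also a weak solution with datum $u(t_0)$, so Theorem~\ref{th:regu_sol} forces $\tilde u(s) = u(t_0+s)$. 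Taking $s=-t_0$ then gives $\mathcal{E}_\textnormal{logGP}(u_0) \leq \mathcal{E}_\textnormal{logGP}(u(t_0))$ and, combined with the first step, the announced identity.

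The main obstacle is conceptual rather than technical: the weak and a.e. compactness of the approximants delivers only lower semicontinuity of the energy, so one cannot hope to pass to the limit directly in $\mathcal{E}_\textnormal{logGP}(u_m(t)) = \mathcal{E}_\textnormal{logGP}(u_0)$. The key device that turns the inequality into equality is the joint use of uniqueness of weak solutions and autonomy of the equation, which crucially relies on the fact that the solution constructed in Section~\ref{sec:construction} is defined on the whole of $\R$ (not only on $[0,\infty)$), so that $-t_0$ is also admissible as a time of evaluation.
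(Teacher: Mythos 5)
Your proposal is correct and follows essentially the same route as the paper: a one-sided inequality $\mathcal{E}_\textnormal{logGP}(u(t))\le\mathcal{E}_\textnormal{logGP}(u_0)$ obtained from Fatou's lemma (and weak lower semicontinuity) along the Galerkin approximation, upgraded to an equality by applying the same inequality to the solution issued from $u(t_0)$, invoking uniqueness from Theorem~\ref{th:regu_sol}, and evaluating at time $-t_0$. The paper's proof is just a terser version of this argument (citing Cazenave's classical trick), so there is nothing to add.
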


\color{black}
\begin{proof}
In view of the construction of the solution, Fatou's lemma yields
  \begin{equation*}
    \mathcal{E}_\textnormal{logGP} (u(t)) \le
    \mathcal{E}_\textnormal{logGP} (u_0),\quad \forall t\in \R.
  \end{equation*}
Arguing like in the proof of \cite[Theorem~3.3.9]{Cazenave_semlin_lognls}, for
$t_0\in \R$, we denote by $v$ the solution to
\eqref{logGP} with $v_{\mid t=0} = u_{\mid t=t_0}$.  We have like above,
 \begin{equation*}
    \mathcal{E}_\textnormal{logGP} (v(t)) \le
   \mathcal{E}_\textnormal{logGP} (v(0)) =
   \mathcal{E}_\textnormal{logGP} (u(t_0)),\quad \forall t\in \R. 
  \end{equation*}
By uniqueness (Theorem~\ref{th:regu_sol}), we infer
$v(t,x)=u(t+t_0,x)$, and, taking $t=-t_0$ in the previous inequality,
\begin{equation*}
  \mathcal{E}_\textnormal{logGP} (u_0) \le
   \mathcal{E}_\textnormal{logGP} (u(t_0)),\quad \forall t_0\in \R. 
\end{equation*}
We conclude 
  $\mathcal{E}_\textnormal{logGP} (u(t)) =
  \mathcal{E}_\textnormal{logGP} (u_0)$ for all $t\in \R$. 
\end{proof}

\color{black}

%%%%%%%%%%%%%%%%%%%%%%%%%%%%%%%%%%%%%%%%%%%%%%%%%%%%%%%%%%%%%%%%%%%%%%%

\section{Higher regularity}
\label{sec:higher}

In this section, we prove the propagation of the $\dot{H}^2$
regularity by the flow of \eqref{logGP}. We would like to proceed like
in \cite{carlesgallagher} (see also \cite{cazenave-haraux,
  Cazenave_semlin_lognls}) directly on the solution $u$ constructed,
i.e. differentiating \eqref{logGP} with respect to time and get an
$L^2$-energy estimate for $\d_t \varphi$. A direct formal computation
would lead to the local boundedness of this $L^2$-norm with the
additional argument that $\partial_t u (0) \in L^2$ as soon as $\Delta
u_0 \in L^2$. However, this computation cannot directly be made
rigorous since $u \ln{\abs{u}^2}$ cannot be differentiated due to the
lack of regularity of the logarithm. 

To overcome this difficulty, we shall work on the approximate
solutions $u_m$. In \cite{carlesgallagher}, the authors worked on the
approximate solutions to prove the propagation of the $H^2$
regularity, since the above mentioned energy estimate is licit in the
case of \eqref{eq:logNLSreg} (and provides bounds which are uniform in
$\eps$). However, we cannot reproduce the same proof here: our
approximate solutions do not satisfy an equation with a regularized
nonlinearity. Indeed, the equations \eqref{eq:eq_u_m} can be put under
the form 
\begin{equation*}
    i\partial_t \varphi_m + \mathbb{P}_m \Bigl( \Delta u_m - \lambda u_m \ln{\abs{u_m}^2} \Bigr) = 0,
\end{equation*}
where $\mathbb{P}_m$ is the orthogonal projector from $L^2$ onto $X_m$ (we can assume $w_j \in H^2$ to make this rigorous).

Thus, the nonlinearity is still not smooth enough to make the
computation rigorous. However, by assuming $w_j \in H^2$ (for all
$j$), we automatically have $\varphi_m (t) \in H^2$ for all $t \in
\mathbb{R}$ and $m \in \mathbb{N}$. Even more, we have $\varphi_m \in
\mathcal{C}^1 (\mathbb{R}, H^2)$ since the $g_{m, k}$'s belong to
$\mathcal{C}^1 (\mathbb{R})$ (see Lemma
\ref{lem:exist_sol_app}). Those crude bounds are \textit{a priori} not
uniform in $m$, but we show that we can improve them. 

\begin{lem} \label{lem:dt_um_unif_bound}
    Let $u_m$ given by Lemma~\ref{lem:exist_sol_app} and assume $w_j \in H^2$ for all $j \in \mathbb{N}$. Then, for all bounded interval $I$, $\partial_t u_m$ is bounded in $\mathcal{C}^0 (I, L^2)$ uniformly in $m$.
\end{lem}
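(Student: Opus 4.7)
The strategy is to mimic the standard energy estimate on $\partial_t u$ — which consists in differentiating the equation in time and testing against $\partial_t u$ — but to carry it out via finite differences in order to avoid differentiating the nonlinearity $z\mapsto z\ln|z|^2$, which is not locally Lipschitz at the origin. The exact analogue of the Cazenave--Haraux bound of Lemma~\ref{lem:log_inequality} will absorb the logarithmic singularity.

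For $h\neq 0$, set
\begin{equation*}
  v_m^h(t)\coloneqq \frac{u_m(t+h)-u_m(t)}{h}.
\end{equation*}
Since $u_m\in \mathcal C^1(\R,X_m)$ and $X_m\subset H^2$, we have $v_m^h\in \mathcal C^1(\R,X_m)$. Subtracting \eqref{eq:eq_u_m} at times $t+h$ and $t$, and projecting onto $X_m$, we get (recalling $\partial_t u_m\in X_m$, so $\mathbb{P}_m\partial_t u_m=\partial_t u_m$):
\begin{equation*}
  i\partial_t v_m^h + \mathbb P_m\Delta v_m^h = \frac{\lambda}{h}\mathbb P_m\Bigl[u_m(t+h)\ln|u_m(t+h)|^2 - u_m(t)\ln|u_m(t)|^2\Bigr].
\end{equation*}
I would then take the $L^2$-inner product of this identity with $v_m^h\in X_m$ and take the imaginary part. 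The Laplacian contribution is real (it equals $-\|\nabla v_m^h\|_{L^2}^2$), the time derivative yields $\tfrac12\tfrac{d}{dt}\|v_m^h\|_{L^2}^2$, and using that $u_m(t+h)-u_m(t)=h\,v_m^h(t)$, Lemma~\ref{lem:log_inequality} gives
\begin{equation*}
  \Bigl|\frac{\lambda}{h}\Im\<u_m(t+h)\ln|u_m(t+h)|^2-u_m(t)\ln|u_m(t)|^2,v_m^h\>\Bigr|\leq 2\lambda\,\|v_m^h\|_{L^2}^2.
\end{equation*}
Hence $\tfrac{d}{dt}\|v_m^h\|_{L^2}^2\le 4\lambda\,\|v_m^h\|_{L^2}^2$, and Gronwall yields $\|v_m^h(t)\|_{L^2}^2 \le \|v_m^h(0)\|_{L^2}^2\,e^{4\lambda|t|}$ for all $t,h$.

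Since $u_m\in \mathcal C^1(\R,L^2)$, letting $h\to 0$ gives the pointwise bound
\begin{equation*}
  \|\partial_t u_m(t)\|_{L^2}^2 \le \|\partial_t u_m(0)\|_{L^2}^2 \,e^{4\lambda|t|}.
\end{equation*}
It remains to bound $\|\partial_t u_m(0)\|_{L^2}$ uniformly in $m$. Using \eqref{eq:eq_u_m} at $t=0$ (where $u_m(0)=u_0$) and that $\mathbb P_m$ is an $L^2$-orthogonal projection,
\begin{equation*}
  \|\partial_t u_m(0)\|_{L^2}\leq \|\mathbb P_m\Delta u_0\|_{L^2}+\lambda\|\mathbb P_m(u_0\ln|u_0|^2)\|_{L^2}\leq \|\Delta u_0\|_{L^2}+\lambda\|u_0\ln|u_0|^2\|_{L^2},
\end{equation*}
which is finite and $m$-independent, thanks to the assumption $\Delta u_0\in L^2$ and Corollary~\ref{cor:v_ln_v_Lp} with $\delta=1$. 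Combined with the Gronwall estimate, this gives a bound on $\|\partial_t u_m\|_{L^2}$ that is uniform in $m$ on every bounded interval $I$.

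\textbf{Main obstacle.} The genuine difficulty is that one cannot directly differentiate $u_m\ln|u_m|^2$ in $t$, since the logarithm is not $\mathcal C^1$ near $0$ and $u_m$ is not known to stay away from zero. The trick that saves the day is precisely the one already used for uniqueness: Lemma~\ref{lem:log_inequality} controls the imaginary part of the $L^2$-pairing by a quantity of the order of the squared $L^2$-difference, which is exactly what is needed to close a Gronwall argument when applied to the discrete increment $u_m(t+h)-u_m(t)$ rather than to $\partial_t u_m$ itself.
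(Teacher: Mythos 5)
Your proof is correct, and it takes a genuinely different route from the paper's. The paper forms the difference quotient of the scalar quantity $t\mapsto\norm{\partial_t \varphi_m(t)}_{L^2}^2$ itself, which forces it to handle Taylor remainders such as $\varphi_m(t+\tau)-\varphi_m(t)-\tau\,\partial_t\varphi_m(t)$ and $\Delta\varphi_m(t+\tau)-\Delta\varphi_m(t)-\tau\,\partial_t\Delta\varphi_m(t)$; these are controlled via the $\mathcal C^{1,\varepsilon}$ regularity of the $g_{m,k}$'s together with Lemma~\ref{lem:prop_lip_log}, H\"older's inequality and Corollary~\ref{cor:v_ln_v_Lp}, producing $m$-dependent error terms $C_{m,t,\varepsilon}\tau^{\varepsilon}$ that only disappear in the limit $\tau\to0$. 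You instead apply the finite difference to the solution, set $v_m^h=(u_m(\cdot+h)-u_m)/h\in\mathcal C^1(\R,X_m)$, and close a Gronwall inequality for $\norm{v_m^h}_{L^2}^2$ that is uniform in $h$, with Lemma~\ref{lem:log_inequality} absorbing the whole nonlinear contribution in one stroke because $u_m(t+h)-u_m(t)=h\,v_m^h(t)$ sits in $X_m$ and can serve as the test function. This avoids all remainder estimates (and the use of Lemma~\ref{lem:prop_lip_log} in this step), and the passage $h\to0$ at the very end is harmless since $\varphi_m\in\mathcal C^1(\R,L^2)$. The endgame is identical to the paper's: $i\partial_t\varphi_m(0)=\mathbb P_m[\lambda u_0\ln\abs{u_0}^2-\Delta u_0]$, orthogonality of $\mathbb P_m$, Corollary~\ref{cor:v_ln_v_Lp}, and the standing assumption $\Delta u_0\in L^2$ of Section~\ref{sec:higher} give the $m$-independent bound at $t=0$. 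Your version is arguably the cleaner of the two; both rest on the same key ingredient, the Cazenave--Haraux inequality of Lemma~\ref{lem:log_inequality}.
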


\begin{proof}
    From Lemma \ref{lem:exist_sol_app}, we know that the $g_{m, j}$'s
    are $\mathcal{C}^{1, \varepsilon}$ for all $\varepsilon \in (0,
    1)$, therefore $\partial_t u_m \in\mathcal C^{0, \varepsilon} (\mathbb{R},
    H^2)$. Let $\tau > 0$ and 
    \begin{equation*}
        \psi_{m, \tau} (t) \coloneqq \frac{\norm{\partial_t \varphi_m (t + \tau)}_{L^2}^2 - \norm{\partial_t \varphi_m (t)}_{L^2}^2}{\tau},
    \end{equation*}
   which is well defined for all $t \in \mathbb{R}$ and any $\tau >
   0$. Our goal is to prove a bound on $\psi_{m, \tau} (t)$
   independent of $\tau \leq 1$. For this, we first rewrite 
    \begin{equation*}
        \psi_{m, \tau} (t) = \frac{1}{\tau} \Re \langle \partial_t \varphi_m (t + \tau) + \partial_t \varphi_m (t), \d_t\varphi_m (t + \tau) - \partial_t \varphi_m (t) \rangle.
    \end{equation*}
    Since $\partial_t \varphi_m (t+\tau)$ and $\partial_t \varphi_m (t)$ are in $\operatorname{Vect} (w_j)_{j \leq m}$, we can use \eqref{eq:eq_u_m} and get
    \begin{multline*}
        \tau \, \psi_{m, \tau} (t) = - \Im \langle \partial_t \varphi_m (t + \tau) + \partial_t \varphi_m (t), \Delta \varphi_m (t + \tau) - \Delta \varphi_m (t) \rangle \\ + \lambda \Im \langle \partial_t \varphi_m (t + \tau) + \partial_t \varphi_m (t), u_m (t+\tau) \ln{\abs{u_m (t+\tau)}^2} - u_m (t) \ln{\abs{u_m (t)}^2} \rangle.
    \end{multline*}

    For the first term, we get
    \begin{align*}
        \langle \partial_t \varphi_m (t), \Delta \varphi_m (t + \tau) - \Delta \varphi_m (t) \rangle &= \langle \partial_t \varphi_m (t), \Delta \varphi_m (t + \tau) - \Delta \varphi_m (t) - \tau \partial_t \Delta \varphi_m (t) \rangle \\&\quad+ \tau \langle \partial_t \varphi_m (t), \partial_t \Delta \varphi_m (t) \rangle.
    \end{align*}
    Since $\langle \partial_t \varphi_m (t), \partial_t \Delta \varphi_m (t) \rangle = - \norm{\partial_t \nabla \varphi_m}_{L^2}^2$, the last term vanishes when we take the imaginary part. On the other hand, we have shown that $\Delta \varphi_m \in \mathcal{C}^{1, \varepsilon} (\mathbb{R}, L^2)$. Hence, for all $\tau \leq 1$,
    \begin{equation*}
        \norm{ \Delta \varphi_m (t + \tau) - \Delta \varphi_m (t) - \tau \partial_t \Delta \varphi_m (t) }_{L^2} \leq C_{m, \varepsilon, t} \tau^{1 + \varepsilon}.
    \end{equation*}
    Therefore, for all $t \in \mathbb{R}$, $\tau, \varepsilon \in (0, 1)$,
    \begin{equation*}
        \abs{\Im \langle \partial_t \varphi_m (t), \Delta \varphi_m (t + \tau) - \Delta \varphi_m (t) \rangle} \leq C_{m, \varepsilon, t} \tau^{1 + \varepsilon},
    \end{equation*}
    and similarly for $\Im \langle \partial_t \varphi_m (t + \tau), \Delta \varphi_m (t + \tau) - \Delta \varphi_m (t) \rangle$.

    Now, we also have
    \begin{multline*}
        \langle \partial_t \varphi_m (t), u_m (t+\tau) \ln{\abs{u_m (t+\tau)}^2} - u_m (t) \ln{\abs{u_m (t)}^2} \rangle \\
            \begin{aligned}
                &= \frac{1}{\tau} \langle \varphi_m (t + \tau) - \varphi_m (t), u_m (t+\tau) \ln{\abs{u_m (t+\tau)}^2} - u_m (t) \ln{\abs{u_m (t+\tau)}^2} \rangle \\ &- \frac{1}{\tau} \langle \varphi_m (t + \tau) - \varphi_m (t) - \tau \partial_t \varphi_m (t), u_m (t+\tau) \ln{\abs{u_m (t+\tau)}^2} - u_m (t) \ln{\abs{u_m (t+\tau)}^2} \rangle.
            \end{aligned}
    \end{multline*}
    For the first term, we can use Lemma \ref{lem:log_inequality} since $\varphi_m (t + \tau) - \varphi_m (t) = u_m (t+\tau) - u_m (t)$, so that
    \begin{multline*}
        \abs{\Im \langle \varphi_m (t + \tau) - \varphi_m (t), u_m (t+\tau) \ln{\abs{u_m (t+\tau)}^2} - u_m (t) \ln{\abs{u_m (t+\tau)}^2} \rangle} \\ \leq 2 \norm{u_m (t+\tau) - u_m (t)}_{L^2}^2 = 2 \norm{\varphi_m (t+\tau) - \varphi_m (t)}_{L^2}^2.
    \end{multline*}
    For the second term, we can use Lemma \ref{lem:prop_lip_log} so that
    \begin{multline*}
        \abs{\langle \varphi_m (t + \tau) - \varphi_m (t) - \tau \partial_t \varphi_m (t), u_m (t+\tau) \ln{\abs{u_m (t+\tau)}^2} - u_m (t) \ln{\abs{u_m (t+\tau)}^2} \rangle} \\
        \leq \norm{\varphi_m (t+\tau) - \varphi_m (t) - \tau \partial_t \varphi_m (t)}_{L^2} \norm{u_m (t+\tau) \ln{\abs{u_m (t+\tau)}^2} - u_m (t) \ln{\abs{u_m (t+\tau)}^2}}_{L^2}
    \end{multline*}
    The first factor can be estimated like previously with the fact that $\varphi_m \in \mathcal{C}^{1, \varepsilon} (\mathbb{R}; L^2)$:
    \begin{equation*}
        \norm{\varphi_m (t+\tau) - \varphi_m (t) - \tau \partial_t \partial_t \varphi_m (t)}_{L^2} \leq C_{m, t, \varepsilon} \tau^{1 + \varepsilon}.
    \end{equation*}
    As for the other factor, we use Lemma \ref{lem:prop_lip_log} and then Corollary \ref{cor:v_ln_v_Lp} to obtain
    \begin{align*}
      &  \norm{u_m (t+\tau) \ln{\abs{u_m (t+\tau)}^2} - u_m (t) \ln{\abs{u_m (t+\tau)}^2}}_{L^2} \\
                \lesssim &
                    \norm{\Bigl( \abs{u_m (t +
                      \tau)}^\frac{\varepsilon}{2} \abs{\ln{\abs{u_m
                      (t + \tau)}}} + \abs{u_m
                      (t)}^\frac{\varepsilon}{2} \abs{\ln{\abs{u_m
                      (t)}}} \Bigr) \abs{u_m (t + \tau) - u_m (t)}^{1
                      - \frac{\varepsilon}{2}}}_{L^2} \\
      &+  \norm{u_m (t + \tau) - u_m (t)}_{L^2}\\
                 \lesssim &  \norm{\abs{u_m (t +
                       \tau)}^\frac{\varepsilon}{2} \abs{\ln{\abs{u_m
                       (t + \tau)}}} + \abs{u_m
                       (t)}^\frac{\varepsilon}{2} \abs{\ln{\abs{u_m
                       (t)}}}}_{L^\frac{4}{\varepsilon}}
                       \norm{\abs{u_m (t + \tau) - u_m (t)}^{1 -
                       \varepsilon}}_{L^\frac{2}{1 -
                       \frac{\varepsilon}{2}}} \\
       &+  \norm{u_m (t + \tau) - u_m (t)}_{L^2}\\
               \lesssim & 
                     \Bigl( \norm{\abs{u_m (t + \tau)}
                     \abs{\ln{\abs{u_m (t +
                     \tau)}}}^\frac{2}{\varepsilon}}_{L^2}^\varepsilon
                     + \norm{\abs{u_m (t)} \abs{\ln{\abs{u_m
                     (t)}}}^\frac{2}{\varepsilon}}_{L^2}^\eps \Bigr)
                     \norm{u_m (t + \tau) - u_m (t)}_{L^2}^{1 -
                     \frac{\varepsilon}{2}} \\
      &+ \norm{u_m (t + \tau) - u_m (t)}_{L^2}\\
               \lesssim &\Bigl( \mathcal{E}_\textnormal{logGP} (u_0)^{\eps/2} + \mathcal{E}_\textnormal{logGP} (u_0)^{(\eps + \eps^2)/2} \Bigr) \norm{u_m (t + \tau) - u_m (t)}_{L^2}^{1 - \frac{\varepsilon}{2}} + \norm{u_m (t + \tau) - u_m (t)}_{L^2}.
    \end{align*}
    Therefore, we obtain
    \begin{multline*}
        \abs{\langle \varphi_m (t + \tau) - \varphi_m (t) - \tau \partial_t \varphi_m (t), u_m (t+\tau) \ln{\abs{u_m (t+\tau)}^2} - u_m (t) \ln{\abs{u_m (t+\tau)}^2} \rangle} \\
        \leq C_{m, t, \varepsilon} \Bigl[ \tau^{2 +
          \frac{\varepsilon}{2}} \Bigl( \mathcal{E}_\textnormal{logGP}
        (u_0)^{\eps/2} +
        \mathcal{E}_\textnormal{logGP} (u_0)^{\eps} \Bigr)
        \norm{\delta_\tau \varphi_m}_{L^2}^{1 - \frac{\varepsilon}{2}}
        + \tau^{2+\varepsilon} \norm{\delta_\tau \varphi_m}_{L^2}
        \Bigr],
    \end{multline*}
    where $\delta_\tau \varphi_m \coloneqq \frac{1}{\tau} (\varphi_m (t+\tau) - \varphi_m(t))$. Thus, we get
    \begin{align*}
    &   \abs{\Im \langle \partial_t \varphi_m (t), u_m (t+\tau)
          \ln{\abs{u_m (t+\tau)}^2} - u_m (t) \ln{\abs{u_m (t)}^2}
      \rangle}  \\
   & \phantom{\Im \langle \partial_t \varphi_m (t)}   \le (2 \tau + C_{m, t, \varepsilon} \tau^{1 +
     \varepsilon}) \norm{\delta_\tau \varphi_m}_{L^2}^2 \\
      & \phantom{\Im \langle \partial_t \varphi_m (t)}  \quad+
        C_{m, t, \varepsilon} \tau^{1 + \frac{\varepsilon}{2}} \(
        \mathcal{E}_\textnormal{logGP} (u_0)^{\eps/2} +
        \mathcal{E}_\textnormal{logGP} (u_0)^{\eps} \)
        \norm{\delta_\tau \varphi_m}_{L^2}^{1 - \frac{\varepsilon}{2}} .
    \end{align*}
    Similar computations can be done for $\Im \langle \partial_t
    \varphi_m (t + \tau), u_m (t+\tau) \ln{\abs{u_m (t+\tau)}^2} - u_m
    (t) \ln{\abs{u_m (t)}^2} \rangle$, so we get
    \begin{equation} \label{eq:est_psi_m_tau}
      \begin{aligned}
               \abs{\psi_{m, \tau} (t)} &\leq C_{m, t, \varepsilon}
               \tau^\varepsilon + (4 \lambda + C_{m, t, \varepsilon}
               \tau^{\varepsilon}) \norm{\delta_\tau
                 \varphi_m}_{L^2}^2 \\
               &\quad + C_{m, t, \varepsilon} \tau^{1 +
                 \frac{\varepsilon}{2}} \Bigl(
               \mathcal{E}_\textnormal{logGP} (u_0)^{\eps/2} +
               \mathcal{E}_\textnormal{logGP} (u_0)^{\eps}
               \Bigr) \norm{\delta_\tau \varphi_m}_{L^2}^{1 -
                 \frac{\varepsilon}{2}}. 
      \end{aligned}
    \end{equation}

    At $m$ fixed, since $\varphi_m \in \mathcal{C}^{1, \varepsilon}(\R;L^2)$,
    we know that as $\tau\to 0$, $\delta_\tau \varphi_m (t)$ converges strongly to $\partial_t \varphi_m (t)$ in $L^2$ for any $t \in \mathbb{R}$, but also that, for $t \in \mathbb{R}$ fixed, $\norm{\delta_\tau \varphi_m}_{L^2}$ is uniformly bounded in $\tau \leq 1$. The previous estimate shows that $\psi_{m, \tau} (t)$ is uniformly bounded in $\tau \leq 1$ at $m \in \mathbb{N}$ and $t \in \mathbb{R}$ fixed, which means that
    \begin{equation*}
 t\mapsto        \norm{\partial_t \varphi_m (t)}_{L^2}^2 \in W^{1, \infty}_\textnormal{loc} (\mathbb{R}, \mathbb{R}).
    \end{equation*}
    The limit $\tau \rightarrow 0$ in \eqref{eq:est_psi_m_tau} then yields
    \begin{equation*}
        \abs{\frac{\diff}{\diff t} \norm{\partial_t \varphi_m (t)}_{L^2}^2} \leq 4 \lambda \norm{\partial_t \varphi_m (t)}_{L^2}^2.
    \end{equation*}
    On the other hand, we also have $i\partial_t \varphi_m (0) = \mathbb{P}_m \Bigl[ \lambda u_0 \ln{\abs{u_0}^2} - \Delta u_0 \Bigr]$, and since $\mathbb{P}_m$ is an orthogonal projector, we get
    \begin{align*}
        \norm{\partial_t \varphi_m (0)}_{L^2} &\le \lambda \norm{u_0
                                                \ln{\abs{u_0}^2}}_{L^2}
                                                + \norm{\Delta
                                                u_0}_{L^2} \\
      & \le C_\varepsilon \lambda (\mathcal{E}_\textnormal{logGP} (u_0)^{\eps/2} + \mathcal{E}_\textnormal{logGP} (u_0)^{\eps}) + \norm{\Delta u_0}_{L^2}.
    \end{align*}
    Thanks to Gronwall Lemma, we get a bound which is independent of $m$: for all $m \in \mathbb{N}$ and $t \in \mathbb{R}$,
    \begin{equation*}
        \norm{\partial_t \varphi_m (t)}_{L^2}^2 \leq \Bigl[
        C_\varepsilon \lambda (\mathcal{E}_\textnormal{logGP}
        (u_0)^{\eps/2} +
        \mathcal{E}_\textnormal{logGP} (u_0)^{\eps})
        + \norm{\Delta u_0}_{L^2} \Bigr]^2 e^{4
          \lambda t}.\qedhere
      \end{equation*}
 \end{proof}

\begin{cor}
    Let $u$ given by Lemma \ref{lem:convergence}. Then, for all bounded interval $I$, $\partial_t u$ is bounded in $L^\infty (I; L^2)$, and so is $\Delta u$.
\end{cor}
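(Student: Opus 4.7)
The plan is to pass to the limit from the approximate solutions $u_m$, where Lemma~\ref{lem:dt_um_unif_bound} provides bounds that are uniform in $m$. Fix a bounded interval $I \subset \mathbb{R}$. Lemma~\ref{lem:dt_um_unif_bound} states that $\partial_t \varphi_m = \partial_t u_m$ is bounded in $\mathcal{C}^0(I, L^2)$, and in particular in $L^\infty(I, L^2)$, uniformly in $m$. Since $L^\infty(I, L^2)$ is the dual of $L^1(I, L^2)$, by Banach--Alaoglu we can extract a subsequence (not relabeled) converging weak-$*$ to some $\psi \in L^\infty(I, L^2)$.

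Next, I would identify $\psi$ with $\partial_t \varphi$. Lemma~\ref{lem:convergence} gives $\varphi_m \wsconv \varphi$ in $L^\infty(I, H^1)$, hence a fortiori $\varphi_m \to \varphi$ in $\mathcal{D}'(I \times \mathbb{R}^d)$, so $\partial_t \varphi_m \to \partial_t \varphi$ in $\mathcal{D}'(I \times \mathbb{R}^d)$ as well. Testing the weak-$*$ convergence of $\partial_t \varphi_m$ to $\psi$ against functions of the form $\theta(t)\chi(x)$ with $\theta \in \mathcal{C}_c^\infty(I)$ and $\chi \in \mathcal{C}_c^\infty(\mathbb{R}^d)$ identifies $\psi = \partial_t \varphi = \partial_t u$. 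The lower semicontinuity of the norm under weak-$*$ convergence then yields
\begin{equation*}
  \|\partial_t u\|_{L^\infty(I, L^2)} \leq \liminf_{m\to\infty}\|\partial_t \varphi_m\|_{L^\infty(I, L^2)} < \infty.
\end{equation*}

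For the bound on $\Delta u$, I would simply exploit that $u$ is a weak solution to \eqref{logGP}, which rewrites as the pointwise (in $t$) identity
\begin{equation*}
  \Delta u(t) = \lambda u(t) \ln |u(t)|^2 - i\partial_t u(t),
\end{equation*}
valid in $\mathcal{D}'(\mathbb{R}^d)$ for a.e.\ $t \in I$. Corollary~\ref{cor:v_ln_v_Lp}, combined with the conservation of $\mathcal{E}_\textnormal{logGP}(u(t)) = \mathcal{E}_\textnormal{logGP}(u_0)$ proved in the previous section, ensures $u \ln|u|^2 \in L^\infty(\mathbb{R}, L^2)$, with a bound depending only on $\mathcal{E}_\textnormal{logGP}(u_0)$. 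Combined with the $L^\infty(I, L^2)$ bound on $\partial_t u$ from the first step, this yields $\Delta u \in L^\infty(I, L^2)$.

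I do not expect any real obstacle here: the work was done upstream in Lemma~\ref{lem:dt_um_unif_bound}, and the corollary is essentially a lower semicontinuity argument plus a rewriting of the equation. The only mild subtlety is to ensure that the weak-$*$ limit of $\partial_t\varphi_m$ is indeed $\partial_t\varphi$, which is settled by testing against smooth tensor products and using the weak-$*$ convergence of $\varphi_m$ itself.
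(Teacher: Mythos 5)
Your proposal is correct and follows essentially the same route as the paper: pass to the limit in the uniform bound of Lemma~\ref{lem:dt_um_unif_bound} via weak-$*$ compactness and identification of the limit with $\partial_t\varphi$ in $\mathcal D'$, then recover $\Delta u$ from the equation using Corollary~\ref{cor:v_ln_v_Lp} and energy conservation. No gaps.
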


\begin{proof}
    We want to take the limit $m \rightarrow \infty$ in Lemma
    \ref{lem:dt_um_unif_bound}. For this, we use the fact that
    $\varphi_m \wsconv \varphi$ in $L^\infty (I; H^1)$ for its
    weak-$*$ topology for any bounded interval $I$ by Lemma
    \ref{lem:convergence}. Therefore, $\partial_t \varphi_m
    \rightharpoonup \partial_t \varphi$ in $\mathcal{D}' (I \times
    \mathbb{R}^d)$. As the $L^\infty (I; L^2)$ norm of $\partial_t
    \varphi_m$ is uniformly bounded in $m$ and since $\mathcal{C}_c (I
    \times \mathbb{R}^d)$ is dense in $L^\infty (I; L^2) = \Bigl( L^1
    (I;L^2) \Bigr)'$, this limit shows that $\partial_t \varphi =
    \partial_t u$ belongs to $L^\infty (I; L^2)$. 

    To conclude, we show that $u \ln{\abs{u}^2} \in L^\infty (I;
    L^2)$: in view of Corollary~\ref{cor:v_ln_v_Lp}, for $t\in I$, 
    \begin{align*}
        \norm{u (t) \ln{\abs{u (t)}^2} }_{L^2}  \lesssim
      \mathcal{E}_\textnormal{logGP} (u_0)^{1/2} +
      \mathcal{E}_\textnormal{logGP} (u_0) .
     \end{align*}
   Hence, using \eqref{logGP}, $\Delta u
    \in L^\infty (I; L^2)$. 
\end{proof}

%%%%%%%%%%%%%%%%%%%%%%%%%%%%%%%%%%%%%%%%%%%%%%%%%%%%%%%%%%%%%%%%%%%%%%%

\section{On stationary and traveling waves}
\label{sec:dynamics}

In this section, we prove some rather general results regarding 
solitary and  traveling waves. 
First, plugging \eqref{eq:trav_wave} into \eqref{logGP}, we get the equation for $\phi$:
\begin{equation} \label{eq:trav_phi}
    - \omega \phi - i c \cdot \nabla \phi + \Delta \phi = \lambda \phi \ln{\abs{\phi}^2}.
\end{equation}

\subsection{The only possible value for $\omega$}

\begin{proof}[Proof of Theorem \ref{th:value_omega}]
    $u$ satisfies \eqref{logGP} with initial data $\phi$ and $u \in
    L^\infty (\mathbb{R}; E_\textnormal{logGP})$. From Lemma
    \ref{lem:u_in_plusH1}, we thus know that $u - \phi \in
    \mathcal{C}^0 (\mathbb{R}; L^2)$. 
    On the other hand, we know that, for any $t \in \mathbb{R}$, $x \in \mathbb{R}^d$ and any $f$ smooth enough, 
    \begin{equation*}
        f (x - tc) - f(x) = - t \int_0^1 \nabla f (x - \tau c) \cdot c \diff \tau,
    \end{equation*}
    which leads to
    \begin{align*}
        \norm{f (x - tc) - f(x)}_{L^2} &\leq \abs{t} \int_0^1 \norm{\nabla f (x - \tau c) \cdot c}_{L^2} \diff \tau\leq \abs{t} \int_0^1 \norm{\nabla f (x - \tau c)}_{L^2} \abs{c} \diff \tau \\
            &\leq \abs{t} \abs{c} \int_0^1 \norm{\nabla f}_{L^2} \diff
              \tau \leq \abs{t} \abs{c} \norm{\nabla f}_{L^2}. 
    \end{align*}
    Since $\nabla \phi \in L^2$, we can also apply this inequality to $\phi$, which proves that for all $t \in \mathbb{R}$, $\phi (. - ct) - \phi \in L^2$.

   The above two claims imply 
    that $u -  \phi(\cdot -ct) \in L^2$ for all $t \in
    \mathbb{R}$. In view of \eqref{eq:trav_wave}, this means that $(e^{i \omega t} - 1) \phi \in L^2$
    for all $t \in \mathbb{R}$. Suppose that $\omega \neq 0$:
    considering $t = \frac{\pi}{\omega}$, we  get $- 2 \phi \in L^2$,
    which is in contradiction with the fact that $\abs{\phi} - 1 \in
    L^2$ from Lemma~\ref{lem:E_logGP_descr}. Therefore, $\omega=0$.
\end{proof}

\subsection{Regularity and limits of a traveling wave in dimension $d=1$}

In this section, we prove a result about the regularity and limits at
infinity of traveling waves of the form \eqref{eq:trav_wave} solution
to \eqref{logGP} in dimension $d=1$. In view of
Theorem~\ref{th:value_omega}, such traveling waves satisfy
\begin{equation} \label{eq:ode_phi_trav}
  -ic \phi' + \phi'' = \lambda \phi \ln{\abs{\phi}^2}.
\end{equation}

\begin{lem} \label{lem:solit_reg_lim}
    A traveling wave $\phi$ solution to \eqref{eq:ode_phi_trav} satisfies
    \begin{itemize}
        \item $\phi \in \mathcal{C}^2_b$ and $\lim_{\abs{x} \rightarrow \infty} \abs{\phi (x)} = 1$,
        \item $\phi' \in H^1$ and $\lim_{\abs{x} \rightarrow \infty} \phi' (x) = 0$.
    \end{itemize}
\end{lem}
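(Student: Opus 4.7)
The starting point is to observe that in dimension $d=1$, the characterization of $E_\textnormal{logGP}$ from Lemma~\ref{lem:E_logGP_descr} gives $\phi'\in L^2(\R)$ and $|\phi|-1\in H^1(\R)$. The one-dimensional Sobolev embedding $H^1(\R)\hookrightarrow C_0(\R)$ then yields that $|\phi|-1$ is continuous and tends to $0$ at infinity, whence $\phi\in L^\infty(\R)$, $\phi$ is continuous (since $\phi\in H^1_\textnormal{loc}$), and
\begin{equation*}
    \lim_{|x|\to\infty}|\phi(x)|=1.
\end{equation*}
Set $M:=\|\phi\|_{L^\infty}$.

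Next I would establish the pointwise inequality
\begin{equation*}
    \bigl|\phi\ln|\phi|^2\bigr|\le C\,\bigl||\phi|-1\bigr|,
\end{equation*}
which holds because the function $g(t)=t\ln t^2$, extended by $g(0)=0$, is continuous on $[0,M]$, smooth away from $0$, and vanishes at both $t=0$ and $t=1$. One splits $[0,M]$ into $[0,1/2]$, $[1/2,2]$ and $[2,M]$ and checks the estimate on each piece: on the middle interval $g$ is $C^1$ with $g(1)=0$, on the other two intervals $|t-1|$ is bounded below while $g$ is bounded. Since $|\phi|-1\in L^2$, this gives $\phi\ln|\phi|^2\in L^2(\R)$. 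Combined with $\phi'\in L^2$, the equation \eqref{eq:ode_phi_trav} rewritten as
\begin{equation*}
    \phi''=ic\,\phi'+\lambda\,\phi\ln|\phi|^2
\end{equation*}
then shows that $\phi''\in L^2(\R)$, i.e. $\phi'\in H^1(\R)$. Applying $H^1(\R)\hookrightarrow C_0(\R)$ once more yields $\phi'\in C_0(\R)$, so $\phi'$ is continuous, bounded, and $\lim_{|x|\to\infty}\phi'(x)=0$.

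It remains to bootstrap to $\phi\in\mathcal{C}^2_b$. Since $\phi$ is bounded and continuous, and the map $z\mapsto z\ln|z|^2$ is continuous on $\C$ (the singularity at the origin is removable, as $z\ln|z|^2\to 0$ when $z\to 0$), the function $\phi\ln|\phi|^2$ is continuous on $\R$. With $\phi'$ also continuous, the identity above forces $\phi''\in\mathcal{C}^0(\R)$, and the $L^\infty$ bounds already obtained on $\phi$, $\phi'$, $\phi\ln|\phi|^2$ give $\phi''\in L^\infty(\R)$. Hence $\phi\in\mathcal{C}^2_b(\R)$, which completes the proof. The only mildly delicate step is the $L^2$ bound on $\phi\ln|\phi|^2$: without the boundedness of $\phi$ (a dimension-one gift) one would have to work harder near infinity to exploit the cancellation at $|\phi|=1$.
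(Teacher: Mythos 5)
Your proof is correct and follows essentially the same route as the paper: obtain $\phi\ln|\phi|^2\in L^2$, read off $\phi''\in L^2$ from the ODE, apply the one-dimensional Sobolev embedding to get $\phi'\in C_0$, and bootstrap to $\phi\in\mathcal{C}^2_b$. The only cosmetic difference is that you establish the $L^2$ bound on the nonlinearity by a direct pointwise estimate exploiting $\phi\in L^\infty$, whereas the paper simply invokes Corollary~\ref{cor:v_ln_v_Lp}; both are valid.
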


\begin{proof}
    Since $\phi \in E_\textnormal{logGP}$, we have $\phi \in H^1_\textnormal{loc}$ and in particular $\phi \in \mathcal{C}^{0, \frac{1}{2}} (\mathbb{R})$ by Sobolev embedding in dimension $d=1$.
    Actually, $\phi$ satisfies \eqref{eq:ode_phi_trav} with $\omega = 0$ and $\phi \ln{\abs{\phi}^2} \in L^2 \cap \mathcal{C}^0$ from Corollary \ref{cor:v_ln_v_Lp}. Moreover, $\phi' \in L^2$ since $\phi \in E_\textnormal{logGP}$. Therefore, we get $\phi'' \in L^2$, which leads to $\phi' \in H^1$. By Sobolev embedding, this leads in particular to $\phi' \in \mathcal{C}^0_b$ and, coming back to \eqref{eq:ode_phi_trav} once again, we get $\phi'' \in \mathcal{C}^0$. This leads to $\phi \in H^2_\textnormal{loc} \cap \mathcal{C}^2$. $\phi' \in H^1$ also gives %$\phi \in \mathcal{C}^{1, \frac{1}{2}} (\mathbb{R})$ and 
    $\lim_{\abs{x} \rightarrow \infty} \phi' (x) = 0$. Furthermore, with Lemma \ref{lem:E_logGP_descr}, $\abs{\phi} - 1 \in H^1$, which means that $\lim_{\abs{x} \rightarrow \infty} \abs{\phi (x)} = 1$ and that $\phi$ is bounded, which shows that $\phi''$ is also bounded by \eqref{eq:ode_phi_trav}.
\end{proof}

%%%%%%%%%%%%%%%%%%%%%%%%%%%%%%%%%%%%%%%%%%%%%%%%%%%%%%%%%%%%%%%%%%%%%%%

\section{Solitary waves in the one-dimensional case}
\label{sec:solitary}

In this section, we prove Theorem \ref{th:sol_wave} in the case $c=0$.
By Theorem~\ref{th:value_omega}, we know that $\omega = 0$. Thus, the
equation we address in this section is:
\begin{equation} \label{eq:solit_phi_ode}
    \phi'' = \lambda \phi \ln{\abs{\phi}^2}.
\end{equation}

\subsection{Properties of a solitary wave in dimension $d=1$}

We first assume that there exists a non-constant traveling wave $u$ of
the form \eqref{eq:trav_wave} with $c = 0$, solution to
\eqref{logGP} in the energy space, and we gather some of its properties. 

First, we show that $\phi$ satisfies an interesting energy equality.

\begin{lem} \label{lem:energy_ode}
    If $\phi\in E_\textnormal{logGP}$, then 
    \begin{equation} \label{eq:energy_ode}
        |\phi'|^2 = \lambda \( |\phi|^2 \ln{|\phi|^2} - |\phi|^2 + 1 \) .
    \end{equation}
\end{lem}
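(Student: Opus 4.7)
The plan is to extract a first integral of the ODE \eqref{eq:solit_phi_ode}. I would set
\[
H(x) := |\phi'(x)|^2 - \lambda\bigl(|\phi(x)|^2 \ln|\phi(x)|^2 - |\phi(x)|^2 + 1\bigr),
\]
adopting the continuity convention $0 \ln 0 = 0$. By Lemma \ref{lem:solit_reg_lim}, $\phi \in \mathcal{C}^2_b$, $\phi'(x) \to 0$ and $|\phi(x)| \to 1$ as $|x| \to \infty$, so $H$ is well-defined and continuous on $\R$, with $H(x) \to 0$ as $|x| \to \infty$. The identity \eqref{eq:energy_ode} amounts to $H \equiv 0$.

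On the open set $U := \{x \in \R : \phi(x) \neq 0\}$ the nonlinearity is smooth. I would multiply \eqref{eq:solit_phi_ode} by $\overline{\phi'}$, take the real part, and use $\Re(\phi'' \overline{\phi'}) = \tfrac12 (|\phi'|^2)'$ together with the antiderivative identity $\tfrac{d}{dy}(y \ln y - y + 1) = \ln y$ applied at $y = |\phi|^2$, to conclude $H' \equiv 0$ on $U$. Hence $H$ is constant on each connected component of $U$.

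Next, I would upgrade this to $H \in \mathcal{C}^1(\R)$ with $H' \equiv 0$ globally. At a zero $x_0$ of $\phi$, the ODE together with the convention gives $\phi''(x_0) = 0$, hence $(|\phi'|^2)'(x_0) = 2 \Re(\phi''(x_0)\overline{\phi'(x_0)}) = 0$. For the potential term, $g := |\phi|^2$ is $\mathcal{C}^2$ with $g(x_0) = g'(x_0) = 0$, so $g(x) = O((x-x_0)^2)$; combined with the monotonicity of $y \mapsto y|\ln y|$ near $0$, this yields $g(x)|\ln g(x)| = O((x-x_0)^2 |\ln|x-x_0||)$, and therefore the difference quotient of $g \ln g - g + 1$ at $x_0$ tends to $0$. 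Thus $H$ is differentiable at $x_0$ with $H'(x_0) = 0$. Being continuous on $\R$ with vanishing derivative everywhere, $H$ is constant, and the limit at infinity pins this constant at $0$.

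The main obstacle will be the non-Lipschitz singularity of $y \mapsto y \ln y$ at $y = 0$, which forbids a direct classical derivation of the first integral whenever $\phi$ vanishes. The resolution is to exploit that the $\mathcal{C}^2$-regularity of $\phi$ forces $|\phi|^2$ to vanish at least quadratically at its zeros, which, together with the sublinear blow-up rate of $y|\ln y|$ near $0$, just suffices to preserve $\mathcal{C}^1$-regularity of $H$ across these points and carry the first-integral argument through.
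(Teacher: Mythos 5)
Your proof is correct and follows the same core strategy as the paper: multiply \eqref{eq:solit_phi_ode} by $\overline{\phi'}$, take the real part to obtain a first integral, and then identify the constant of integration. The two differences are worth noting. First, the paper simply writes $\frac{\diff}{\diff x}|\phi'|^2=\lambda\frac{\diff}{\diff x}\(|\phi|^2\ln|\phi|^2-|\phi|^2+1\)$ without comment, whereas you explicitly justify this identity across the zeros of $\phi$, where the chain rule is not directly available because $F'(y)=\ln y$ blows up at $y=0$; your observation that $|\phi|^2$ vanishes at least quadratically at such points (since it is $\mathcal C^2$ with vanishing value and first derivative there), so that $g\ln g=O((x-x_0)^2|\ln|x-x_0||)$ and the difference quotient still tends to $0$, genuinely closes a gap the paper leaves implicit. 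Second, you pin the constant using the limits $\phi'\to 0$ and $|\phi|\to 1$ at infinity from Lemma~\ref{lem:solit_reg_lim}, while the paper argues instead that both $|\phi'|^2$ and the potential term are in $L^1(\R)$, so the constant, being an $L^1$ function on $\R$, must vanish. Both routes are valid; yours is marginally more self-contained since it uses only the pointwise asymptotics already recorded in Lemma~\ref{lem:solit_reg_lim}.
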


\begin{proof}
    Multiplying \eqref{eq:solit_phi_ode} by $2 \bar\phi' (x)$, and
    taking the real value, we get
    \begin{equation*}
        \frac{\diff}{\diff x} |\phi'|^2 = \lambda \frac{\diff}{\diff
          x} \( |\phi|^2        \ln{|\phi|^2} - |\phi|^2 + 1 \).
    \end{equation*}
    By the facts that $\phi \in E_\textnormal{logGP}$ and $|\phi|^2
    \ln{|\phi|^2} - |\phi|^2 + 1 \ge 0$, we get $|\phi'|^2 \in L^1$
    and $|\phi|^2 \ln{|\phi|^2} - |\phi|^2 + 1 \in L^1$, so we can
    integrate the above equation, and no additional constant of
    integration appears.
\end{proof}

\begin{cor} \label{cor:d_phi_sign}
    If $\phi\in E_\textnormal{logGP}$ is real-valued on some unbounded interval $I$, then either $\phi'$ never vanishes and does not change sign on $I$, or $\phi$ is constant on $I$ (equals to $\pm 1$).
\end{cor}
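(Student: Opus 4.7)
The plan is to combine the first integral from Lemma~\ref{lem:energy_ode} with uniqueness for the underlying second-order ODE \eqref{eq:solit_phi_ode}. I will show the contrapositive: if $\phi'(x_0)=0$ for some $x_0\in I$, then $\phi$ is identically $+1$ or $-1$ on $I$. Evaluating \eqref{eq:energy_ode} at $x_0$ and setting $y=|\phi(x_0)|^2\ge 0$ yields $y\ln y-y+1=0$. The function $F(y)=y\ln y-y+1$ is strictly convex on $(0,\infty)$ with $F'(1)=0$, $F(1)=0$ and $F(0)=1>0$, so it vanishes only at $y=1$. Hence $|\phi(x_0)|=1$, and since $\phi$ is real on $I$, $s:=\phi(x_0)\in\{+1,-1\}$. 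Evaluating \eqref{eq:solit_phi_ode} at $x_0$ then gives $\phi''(x_0)=0$.

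The restriction of $\phi$ to $I$ is a $\mathcal C^2$ real-valued solution of the autonomous second-order ODE $\phi''=2\lambda\phi\ln|\phi|$ with Cauchy data $(\phi(x_0),\phi'(x_0))=(s,0)$. The nonlinearity $y\mapsto 2\lambda y\ln|y|$ is $\mathcal C^\infty$ on a neighbourhood of $s=\pm 1$, since its only singularity sits at $y=0$. Cauchy--Lipschitz uniqueness therefore applies, and because the constant function $x\mapsto s$ solves the same Cauchy problem, we obtain $\phi\equiv s$ in a neighbourhood of $x_0$ in $I$.

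To globalise this, I would consider $A=\{x\in I:\phi(x)=s\text{ and }\phi'(x)=0\}$. This set is non-empty (it contains $x_0$) and closed in $I$ by continuity of $\phi$ and $\phi'$. At any $x\in A$, the preceding paragraph applied with $x$ in place of $x_0$ (the hypotheses $\phi(x)=s$ and $\phi'(x)=0$ are exactly what is needed) produces a neighbourhood on which $\phi\equiv s$, hence $A$ is also open in $I$. Since $I$ is a connected interval, $A=I$, i.e.\ $\phi\equiv s$ on $I$. The contrapositive yields the claim: if $\phi$ is non-constant on $I$, then $\phi'$ never vanishes on $I$, and by continuity of $\phi'$ the intermediate value theorem forbids a sign change. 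The one point to watch is that Cauchy--Lipschitz genuinely applies where invoked; this is guaranteed by the energy identity, which pins every zero of $\phi'$ to $|\phi|=1$, and so as far as possible from the singularity of the logarithm.
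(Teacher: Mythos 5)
Your proposal is correct and follows essentially the same route as the paper's proof: the first integral of Lemma~\ref{lem:energy_ode} pins any zero of $\phi'$ to $\phi=\pm 1$, and Cauchy--Lipschitz uniqueness for \eqref{eq:solit_phi_ode} (whose nonlinearity is smooth away from the origin) then forces $\phi$ to coincide with the constant solution. Your explicit open--closed connectedness argument merely spells out what the paper compresses into the parenthetical ``as long as it does not vanish''.
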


\begin{proof}
    If there is some $y \in I$ such that $\phi' (y) = 0$, then Lemma \ref{lem:energy_ode} shows that $\phi (y) = \pm 1$ as $x \mapsto x \ln{x} - x + 1$ vanishes only for $x = 1$ on $\mathbb{R}_+$, and thus the uniqueness part of the Cauchy theorem applied to \eqref{eq:solit_phi_ode} gives (as long as it does not vanish) $\phi \equiv \pm 1$ on $I$. On the other hand, if there is no such $y$, $\phi'$ cannot change sign on $I$ since it is continuous and never vanishes.
\end{proof}

Then, we show that $\phi$ can be taken real-valued up to a gauge on
some neighborhood of $+ \infty$. 

\begin{lem} \label{lem:infinite_int}
    There exists $x^- \in \mathbb{R} \cup \{ - \infty \}$ and $\theta
    \in [0, 2 \pi)$ such that $\phi (x) e^{-i \theta}$ is real-valued
    and positive for all $x \geq x^-$, and, either $x^- = - \infty$, or
    $\lim_{x\to x^-} \phi(x) = 0$. 
\end{lem}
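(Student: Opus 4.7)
The plan is to exploit the fact that the equation \eqref{eq:solit_phi_ode} is autonomous and admits a conserved Wronskian-type quantity. Define
\begin{equation*}
    W(x) \coloneqq \Im\bigl(\overline{\phi(x)}\,\phi'(x)\bigr), \qquad x\in \R.
\end{equation*}
Since $\phi\in \mathcal{C}^2$ by Lemma~\ref{lem:solit_reg_lim}, $W$ is $\mathcal{C}^1$ and
\begin{equation*}
    W'(x) = \Im\bigl(\overline{\phi'}\,\phi' + \overline{\phi}\,\phi''\bigr) = \Im\bigl(\lambda|\phi|^2\ln|\phi|^2\bigr)=0,
\end{equation*}
where I used \eqref{eq:solit_phi_ode} and the fact that $\overline{\phi}\,\phi=|\phi|^2$ is real. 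Hence $W$ is constant on $\R$. On the other hand, by Lemma~\ref{lem:solit_reg_lim}, $|\phi(x)|\to 1$ and $\phi'(x)\to 0$ as $x\to +\infty$, so $W(x)\to 0$. Therefore $W\equiv 0$.

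Next I would translate the identity $W\equiv 0$ into a statement about the argument of $\phi$. On any open set where $\phi$ does not vanish, write locally $\phi=\rho\,e^{i\vartheta}$ with $\rho=|\phi|>0$ and $\vartheta\in \mathcal{C}^2$ a real lift of the argument. Then $\phi'=(\rho'+i\rho\vartheta')e^{i\vartheta}$ and
\begin{equation*}
    0=W=\Im(\overline{\phi}\,\phi')=\rho^2\,\vartheta',
\end{equation*}
so $\vartheta'\equiv 0$ on each connected component of $\{\phi\neq 0\}$. Thus on every such component $\phi$ has constant argument.

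Finally, since $|\phi(x)|\to 1$ at $+\infty$, there exists $R$ such that $\phi(x)\neq 0$ for $x\ge R$. Let
\begin{equation*}
    x^- \coloneqq \inf\{\,a\in \R \,:\, \phi(y)\neq 0\ \text{for all }y\ge a\,\}\in \R\cup\{-\infty\}.
\end{equation*}
Then $(x^-,\infty)$ is the connected component of $\{\phi\neq 0\}$ containing a neighborhood of $+\infty$, so by the previous step there exists a constant $\theta\in[0,2\pi)$ with $\phi(x)=|\phi(x)|e^{i\theta}$ on $(x^-,\infty)$, i.e. $\phi(x)e^{-i\theta}>0$ there. If $x^->-\infty$, the definition of the connected component forces $\phi(x^-)=0$ by continuity, so $\lim_{x\to x^-}\phi(x)=0$; otherwise $x^-=-\infty$.

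The only slightly delicate point is the existence of a global smooth lift $\vartheta$ of the argument on each connected component, but this is standard: each component is an interval, hence simply connected, and $\phi/|\phi|$ is a $\mathcal{C}^2$ map into $\mathbb S^1$, which admits a $\mathcal{C}^2$ real lift. No further obstacle is expected.
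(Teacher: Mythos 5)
Your proof is correct. The underlying first integral is the same as in the paper's argument: the paper takes the imaginary part of the polar-form equation to get $\rho\theta''+2\rho'\theta'=0$, integrates to $\rho^2\theta'=c_0$, and this constant $c_0$ is precisely your Wronskian $W=\Im(\overline{\phi}\,\phi')$. Where you genuinely diverge is in (i) how the constant is shown to vanish and (ii) how the interval $(x^-,\infty)$ is produced. For (i), the paper writes $\phi'=(\rho'+ic_0/\rho)e^{i\theta}$ and argues that $c_0/\rho\in L^2$ on an infinite interval while $c_0/\rho\to c_0$ at $+\infty$, forcing $c_0=0$; you instead observe that $W$ is conserved on all of $\R$ (not merely on a component of $\{\phi\neq0\}$) and tends to $0$ at $+\infty$ because $\phi$ is bounded and $\phi'\to0$ by Lemma~\ref{lem:solit_reg_lim}. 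Your version is more direct, needs only the pointwise decay of $\phi'$ rather than its $L^2$-integrability, and gives $W\equiv0$ globally, which is slightly stronger. For (ii), the paper sets up the first-order system \eqref{eq:ode_phi_psi} and invokes Cauchy--Lipschitz on $(\C\setminus\{0\})\times\C$ to define a maximal interval and discuss the blow-up alternative; you define $x^-$ purely topologically as the infimum of the connected component of $\{\phi\neq0\}$ containing a neighborhood of $+\infty$, which suffices for this lemma and avoids the ODE machinery (the paper reuses that machinery later, e.g.\ in Corollary~\ref{cor:d_phi_sign}, but it is not needed here). The dichotomy at $x^-$ and the existence of a $\mathcal{C}^2$ lift of the argument on an interval are handled correctly. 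No gap.
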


\begin{proof}
    We can write \eqref{eq:solit_phi_ode} as a first order differential system,
    \begin{equation} \label{eq:ode_phi_psi}
        \begin{pmatrix}
            \phi \\
            \psi
        \end{pmatrix}'
        =
        \begin{pmatrix}
            \psi \\
            \lambda \phi \ln{\abs{\phi}^2}
        \end{pmatrix}
        \eqqcolon F \( 
        \begin{pmatrix}
            \phi \\
            \psi
        \end{pmatrix}
        \).
    \end{equation}
    $F$ is a continuous function on $\mathbb{C}^2$ and is of class
    $\mathcal{C}^1$ on $(\mathbb{C} \setminus \{ 0 \}) \times
    \mathbb{C}$. 
    From Lemma~\ref{lem:solit_reg_lim}, there exists $x_0 \in
    \mathbb{R}$ such that $\frac{3}{2}\ge \abs{\phi (x)} \ge \frac{1}{2}$ and
    $\abs{\phi' (x)} \leq \frac{1}{2}$ for all $x \geq
    x_0$. Therefore, by Cauchy-Lipschitz Theorem, there exists a maximal
    interval $I_+ = (x^-, x^+)$ such that $x_0 \in I_+$ and $(\phi,
    \phi')$ is the unique solution of \eqref{eq:ode_phi_psi} on $I_+$
    with initial data $\phi (x_0)$ and $\psi (x_0) = \phi' (x_0)$,
    with values in $\mathbb{C}^2 \setminus (\{ 0 \} \times
    \mathbb{C})$. Moreover, if $\abs{x^+} < \infty$, then either
    $\lim_{x \rightarrow x^+} \phi (x) = 0$ or $\lim_{x \rightarrow
      x^+} (\abs{\phi (x)} + \abs{\psi (x)}) = \infty$, and similarly
    for the limit $x \rightarrow x^-$. 

    However, since we already know that $\frac{3}{2}\ge \abs{\phi (x)}
    \ge  \frac{1}{2}$ and $\abs{\phi' (x)} \leq \frac{1}{2}$ for all
    $x \geq x_0$, we can already conclude that $x^+ = \infty$. 
    On the other hand, since $\phi \in \mathcal{C}^2 (\mathbb{R})$ by Lemma~\ref{lem:solit_reg_lim}, we know that $\lim_{x \rightarrow x^-} (\abs{\phi (x)} + \abs{\psi (x)}) = \infty$ cannot be true if $x^- > - \infty$.
    
    On $I_+$, $\phi$ does not vanish. Hence, we can use a polar decomposition: $\phi = \rho e^{i \theta}$, with $\rho$ and $\theta$ real-valued and defined as 
    \begin{equation} \label{eq:def_rho_theta}
        \rho = \abs{\phi}, \qquad
        \theta' = \frac{\Im ( \phi' \overline{\phi} )}{\rho^2}, \quad
        \theta (x_0) \in [0, 2 \pi) \text{ such that } e^{i \theta (x_0)} = \frac{\phi (x_0)}{\rho (x_0)}.
    \end{equation}
    The regularity of $\phi$ and the fact that it does not vanish on $I_+$ ensure that $\rho$ and $\theta$ are well defined and $\mathcal{C}^2$ on $I_+$. %Furthermore, $\rho$ is also bounded away from $0$ on every compact of $I_+$, which gives that both $\rho$ and $\theta$ are actually $H^2_\textnormal{loc} (I_+)$.
    Thus, we can compute $\phi''$ in terms of $\rho$, $\theta$, and
    their derivatives: 
    \begin{equation*}
        \phi'' = \(\rho'' - \rho (\theta')^2 + 2 i \rho' \theta'\) e^{i \theta}.
    \end{equation*}
    Substituting into \eqref{eq:solit_phi_ode}, we get (after simplification by $e^{i \theta}$)
    \begin{equation} \label{eq:polar_form_ode}
        \rho'' - \rho (\theta')^2 + 2 i \rho' \theta' + i \rho \theta'' = \lambda \rho \ln{\rho^2}.
    \end{equation}
    By taking the imaginary part, we get $\rho \theta'' + 2 \rho' \theta' = 0$. Since $\rho$ does not vanish on $I_+$, we get for all $x \in I_+$:
    \begin{equation*}
        \theta' (x) = \frac{c_0}{\rho (x)^2},
    \end{equation*}
    where $c_0$ is a constant.
    Then, we can compute for all $x \in I_+$
    \begin{equation*}
        \phi' (x) = \(\rho' (x) + i \rho (x) \theta' (x)\) e^{i \theta (x)} = \(\rho' (x) + i \frac{c_0}{\rho (x)}\) e^{i \theta (x)},
    \end{equation*}
    which leads to $\frac{c_0}{\rho} \in L^2 (I_+)$ and at the same time $\lim_{+ \infty} \frac{c_0}{\rho} = c_0$ since we know that $\lim_{x \rightarrow + \infty} \rho (x) = 1$. Combining these two arguments yields $c_0 = 0$.

    Hence, $\theta$ is constant on $I_+$, which gives the conclusion.
\end{proof}

From now on, without loss of generality, we can assume that $\theta =
0$ in Lemma~\ref{lem:infinite_int}, up to changing $\phi$ to $e^{- i
  \theta}\phi$.

\begin{lem} \label{lem:x_minus_prop}
    Let $x^- \in \mathbb{R} \cup \{ - \infty \}$ from
    Lemma~\ref{lem:infinite_int}. Then $x^- > - \infty$. Moreover,
    $\phi (x) \in (0, 1)$ and $\phi' (x) > 0$ for all $x > x^-$. Last,
    $\phi (x^-) = 0$ and $\phi' (x^-) = 1$. 
\end{lem}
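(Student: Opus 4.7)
The plan is to treat the four assertions in the order they appear, using the rigidity of Corollary~\ref{cor:d_phi_sign}, the energy identity of Lemma~\ref{lem:energy_ode}, and the asymptotic information gathered in Lemma~\ref{lem:solit_reg_lim}.

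First I would rule out $x^-=-\infty$ by contradiction. If this were the case, Lemma~\ref{lem:infinite_int} would give $\phi>0$ on all of $\R$, and Lemma~\ref{lem:solit_reg_lim} would force $\phi(x)\to 1$ both as $x\to+\infty$ and as $x\to-\infty$. Since $u$ is assumed to be a non-constant traveling wave, $\phi\not\equiv 1$, so Corollary~\ref{cor:d_phi_sign} applied to the unbounded interval $\R$ would force $\phi'$ to be non-vanishing of constant sign on $\R$, hence $\phi$ to be strictly monotone; but a strictly monotone function cannot share the same limit at $\pm\infty$, giving the contradiction. Continuity of $\phi\in\mathcal{C}^2(\R)$ (from Lemma~\ref{lem:solit_reg_lim}) combined with $\lim_{x\to(x^-)^+}\phi(x)=0$ from Lemma~\ref{lem:infinite_int} then yields $\phi(x^-)=0$.

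Next I would show that $\phi'\neq 0$ and $\phi<1$ on the whole ray $(x^-,\infty)$. By the energy identity \eqref{eq:energy_ode}, for $x>x^-$ one has $\phi'(x)=0$ iff $\phi(x)=1$, since $y\mapsto y\ln y-y+1$ vanishes on $[0,\infty)$ only at $y=1$. If there existed $y>x^-$ with $\phi(y)=1$, then $(\phi(y),\phi'(y))=(1,0)$, and because the right-hand side of \eqref{eq:solit_phi_ode} is smooth in a neighborhood of $\phi=1$, Cauchy--Lipschitz uniqueness applied to the first-order system \eqref{eq:ode_phi_psi} forces $\phi\equiv 1$ on a neighborhood of $y$ and therefore on the whole real-valued branch $[x^-,\infty)$ by maximal extension, contradicting $\phi(x^-)=0$. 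Hence $\phi'$ never vanishes on $(x^-,\infty)$ and, by continuity, has constant sign there; the sign must be positive since $\phi$ must rise from $0$ at $x^-$ to the limit $1$ at $+\infty$. Strict monotonicity then gives $\phi(x)\in(0,1)$ on $(x^-,\infty)$. Finally, evaluating \eqref{eq:energy_ode} at $x^-$ (using continuity of $\phi'$) yields $|\phi'(x^-)|^2=\lambda F(0)=\lambda$, and the positivity of $\phi'$ just above $x^-$ selects the positive square root (which equals $1$ under the normalization $\lambda=1$ permitted by the remark after Theorem~\ref{th:Cauchy_th_main}).

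The principal obstacle is the rigidity step ruling out $\phi(y)=1$ on the open ray: it hinges on the local smoothness of $y\mapsto y\ln y$ \emph{away} from the origin, so that Cauchy--Lipschitz is licit around the equilibrium $\phi=1$, whereas at $y=0$ uniqueness fails precisely, which is why the symmetric statement starting from $(\phi(x^-),\phi'(x^-))=(0,\sqrt{\lambda})$ does \emph{not} collapse the profile and a non-trivial kink can exist in the first place.
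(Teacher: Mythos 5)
Your proof is correct and covers all four assertions, but the decisive step --- ruling out $x^-=-\infty$ --- is argued by a genuinely different route. The paper first establishes $\phi\in(0,1)$ and $\phi'>0$ on $(x^-,\infty)$ and then uses a quantitative argument: since $y\mapsto y^2\ln y^2-y^2+1$ is decreasing on $[0,1]$, the energy identity \eqref{eq:energy_ode} makes $(\phi')^2$ nonincreasing in $x$, whence $\phi'(x)\ge\phi'(x_0)>0$ for all $x\le x_0$; integrating backward traps $\phi$ below the line $\phi(x_0)+(x-x_0)\phi'(x_0)$, which becomes negative at finite distance, contradicting $\phi>0$ and even localizing $x^-$ within $\phi(x_0)/\phi'(x_0)$ of $x_0$. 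You instead give a soft topological argument: if $x^-=-\infty$ then $\phi$ is real and positive on all of $\R$, hence tends to $1$ at both ends by Lemma~\ref{lem:solit_reg_lim}, while Corollary~\ref{cor:d_phi_sign} and non-constancy make it strictly monotone, which is incompatible with equal limits at $\pm\infty$. Both are valid; yours is shorter and avoids the monotonicity analysis of the potential, while the paper's yields an explicit bound on $x^-$. You also reverse the order of the steps (finiteness of $x^-$ and $\phi(x^-)=0$ first, then the sign of $\phi'$ and the range of $\phi$), which is legitimate since your first step does not rely on the later ones. Finally, your observation that the energy identity only gives $\phi'(x^-)=\sqrt{\lambda}$, equal to $1$ under the normalization $\lambda=1$ permitted by the remark after Theorem~\ref{th:Cauchy_th_main}, is accurate --- the paper's statement tacitly uses that normalization.
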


\begin{proof}
    % First of all, for all $x > x^-$, $\phi (x) \neq 1$. Indeed, we can apply Lemma \ref{lem:energy_ode} on $(x^-, \infty)$, and thus by contradiction, if $\phi (x_1) = 1$, then $\phi' (x_1) = 0$, and by uniqueness of the Cauchy problem (as long as $\phi$ does not vanish), we get $\phi \equiv 1$ on $(x^-, \infty)$, which would give $x^- = -\infty$ and $\phi$ constant, which is a contradiction.
    % %
    % Therefore, since $x \ln{x} - x + 1 > 0$ for all $x \in \mathbb{R}_+ \setminus \{ 1 \}$, Lemma \eqref{lem:energy_ode} shows that $\phi'$ never vanishes on $(x^-, \infty)$, and thus does not change sign on this interval since it is continuous.
    From Corollary \ref{cor:d_phi_sign}, we know that $\phi'$ never vanishes and does not change sign on $(x^-, \infty)$.
    By contradiction, if $\phi (x_2) > 1$ for some $x_2 > x^-$, then
    \begin{itemize}
        \item Either $\phi' > 0$ on $(x^-, \infty)$, and thus $\phi (x) \geq \phi (x_2)$ for all $x \geq x_2$, which is in contradiction with the fact that $\lim_\infty \abs{\phi} = 1$.
        \item Or $\phi' < 0$ on $(x^-, \infty)$, which means that $\phi (x) \geq \phi (x_2) > 1$ for all $x^- < x \leq x_2$. Therefore, $x^- = - \infty$, but then we have the same contradiction as in the previous case, for $- \infty$.
    \end{itemize}
    Therefore, $\phi (x) \in (0, 1)$ for all $x > x^-$, and this leads to $\phi' > 0$ on $(x^-, \infty)$.
    Now, remark that $f(x) = x^2 \ln{x^2} - x^2 + 1$ is decreasing on $[0, 1]$. Thus, $\lambda f(\phi) = (\phi')^2$ is decreasing, which shows that, taking $x_0 \in (x^-, \infty)$, we have $\phi' (x)^2 \geq \phi' (x_0)^2$ for all $x^- < x \leq x_0$, and thus $\phi' (x) \geq \phi' (x_0)$. By integration, we get for all $x \in (x^-, x_0]$,
    \begin{equation*}
        \phi (x) \leq \phi (x_0) + (x - x_0) \phi' (x_0).
    \end{equation*}
    Since the right hand side goes to $- \infty$ as $x \rightarrow -
    \infty$, and since $\phi > 0$ on $(x^-, \infty)$, this yields $x^- >
    - \infty$. Thus, we are in the second case of Lemma
    \ref{lem:infinite_int}, and since $\phi$ is continuous, we get
    $\phi (x^-) = 0$. Last, \eqref{eq:energy_ode} holds for $x = x^-$,
    and we know by continuity that $\phi' (x^-) \geq 0$, which gives
    $\phi' (x^-) = 1$. 
\end{proof}

\begin{lem} \label{lem:inifnite_int_2}
    Let $x^- \in \mathbb{R}$ from Lemma \ref{lem:infinite_int}. Then, for all $x < x^-$, $\phi (x)$ is real-valued, negative and increasing.
\end{lem}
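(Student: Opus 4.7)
The plan is to extend the polar-coordinate analysis from the proof of Lemma~\ref{lem:infinite_int} to the left of $x^-$. From Lemma~\ref{lem:x_minus_prop} we have $\phi(x^-) = 0$ and $\phi'(x^-) = 1$; by the $\mathcal{C}^2$-continuity of $\phi$ (Lemma~\ref{lem:solit_reg_lim}), $\phi$ does not vanish on some interval $(x^- - \delta, x^-)$. Let $(x^{--}, x^-)$ with $x^{--} \in [-\infty, x^-)$ be the largest interval ending at $x^-$ on which $\phi$ does not vanish. There, the polar decomposition $\phi = \rho e^{i\theta}$ with $\rho, \theta \in \mathcal{C}^2$ is licit, and the same calculation as in the proof of Lemma~\ref{lem:infinite_int} yields $\theta'(x) = c_0/\rho(x)^2$ for some real constant $c_0$. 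The identity $|\phi'|^2 = (\rho')^2 + c_0^2/\rho^2$ combined with the boundedness of $\phi'$ near $x^-$ (where $\rho \to 0$) forces $c_0 = 0$, hence $\theta$ is constant on $(x^{--}, x^-)$.

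I then identify this constant. By Taylor expansion, $\phi(x) = (x - x^-) + o(|x - x^-|)$ as $x \to x^-$, so for $x < x^-$ we have $\rho(x) e^{i\theta}/(x^- - x) \to -1$, which together with $\rho(x)/(x^- - x) > 0$ forces $e^{i\theta} = -1$, i.e., $\theta \equiv \pi$. Thus $\phi$ is real-valued and strictly negative on $(x^{--}, x^-)$. For the monotonicity, I argue as in the proof of Lemma~\ref{lem:x_minus_prop}: any interior zero of $\phi'$ would, by \eqref{eq:energy_ode}, force $\phi = -1$ at that point, and Cauchy-Lipschitz (valid as long as $\phi$ stays away from $0$) would give $\phi \equiv -1$ on $(x^{--}, x^-)$, contradicting $\phi(x^-) = 0$. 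Combined with $\phi'(x^-) = 1 > 0$, continuity yields $\phi' > 0$, so $\phi$ is increasing on $(x^{--}, x^-)$.

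The main obstacle I anticipate is showing that $x^{--} = -\infty$, which is needed to extend the conclusion to all of $(-\infty, x^-)$. If $x^{--} > -\infty$, the maximality of $(x^{--}, x^-)$ forces $\phi(x^{--}) = 0$; the energy identity \eqref{eq:energy_ode} then gives $\phi'(x^{--})^2 = \lambda$, and the continuity of $\phi'$ together with $\phi' > 0$ on $(x^{--}, x^-)$ yields $\phi'(x^{--}) = +\sqrt{\lambda}$. The Taylor expansion $\phi(x) = \sqrt{\lambda}(x - x^{--}) + o(x - x^{--})$ would then produce $\phi(x) > 0$ for $x$ slightly greater than $x^{--}$, in direct contradiction with the strict negativity of $\phi$ on $(x^{--}, x^-)$ already established. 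Hence $x^{--} = -\infty$, and the proof is complete.
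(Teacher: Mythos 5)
Your proof is correct and follows essentially the same route as the paper's: polar decomposition on the maximal interval $(x^{--},x^-)$ of non-vanishing to the left of $x^-$, elimination of the angular constant $c_0$, identification of the phase as $\pi$ via the expansion $\phi(x)\sim x-x^-$, monotonicity from the energy identity \eqref{eq:energy_ode} together with uniqueness in Cauchy--Lipschitz, and a contradiction if $x^{--}>-\infty$. The only differences are two local shortcuts, both valid: you obtain $c_0=0$ from the boundedness of $\phi'$ against $\rho\to 0$ (the paper instead uses that $c_0/\rho$ must lie in $L^2$ while $\rho\sim\abs{x-x^-}$), and you exclude $x^{--}>-\infty$ by evaluating \eqref{eq:energy_ode} at the putative zero to get $\phi'(x^{--})=\sqrt{\lambda}$ and a sign contradiction, where the paper simply observes that monotonicity forces $\phi(x^{--})\le\phi(x^--\delta)<0$, contradicting $\phi(x^{--})=0$.
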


\begin{proof}
    From Lemma \ref{lem:x_minus_prop}, we know that $\phi (x^-) = 0$
    and $\phi' (x^-) = 1$. Therefore, there exists $\delta > 0$ such
    that $\phi \neq 0$ on $[x^- - \delta, x^-)$. Define 
    \begin{equation*}
        x^-_0 \coloneqq \inf \{ y < x^- \, | \, \forall x \in (y, x^-), \phi (x) \neq 0 \} < x^- - \delta.
    \end{equation*}
   On $I_2 \coloneqq (x^-_0, x^-)$, we can use a polar factorization in
   the same way as in \eqref{eq:def_rho_theta}: $\phi = \rho e^{i
     \theta}$,  where $\rho$ and $\theta$ also satisfy
   \eqref{eq:polar_form_ode} on $I_2$. Taking again the imaginary
   part and integrating the ODE, we have, like before,
    \begin{equation*}
        \theta' (x) = \frac{c_0}{\rho (x)^2}, \qquad \forall x \in I_2.
    \end{equation*}
    We still have then
    \begin{equation*}
        \phi' (x) = \(\rho' (x) + i \frac{c_0}{\rho (x)}\) e^{i \theta (x)},
    \end{equation*}
    and once again we must have $\frac{c_0}{\rho (x)} \in L^2 (I_2)$
    since $\phi' \in L^2$. However, we know that $\phi (x^-) = 0$ and
    $\phi' (x^-) = 1$, which means that $\phi (x) \sim x - x^-$ as $x\to x^-$.
    In terms of $\rho$, this implies $\rho (x) \sim\abs{x - x^-}$
    as $x\to x^-$, and $\frac{c_0}{\rho (x)} \in 
    L^2 (I_2)$ if and only if $c_0 = 0$. Once again, $\theta$ is
    constant on $I_2$, and therefore so is $\frac{\phi}{\rho}$. With
    the previous asymptotics, we know that it tends to $-1$ at
    $x^-$. Thus, we get 
    \begin{equation*}
        \phi = - \rho.
    \end{equation*}
    This shows that $\phi$ is real-valued and negative on $I_2$. Then, we can apply Lemma~\ref{lem:energy_ode} on $(x^-_0, \infty)$, which shows that $\phi'$ does not vanish on this interval, and thus $\phi'$ does not change sign once again, which proves that $\phi' > 0$ on $I_2$. Therefore, $\phi (x) \leq \phi (x^- - \delta) < 0$ for all $x \in (x^-_0, x^- - \delta)$, which proves that $x^-_0 = - \infty$.
\end{proof}

It only remains to analyze the limit of $\phi$ at $- \infty$.

\begin{lem}
  We have
  \begin{equation*}
    \lim_{x\to - \infty} \phi    (x)= - 1.
  \end{equation*}
\end{lem}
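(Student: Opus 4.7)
Proof plan. The conclusion follows almost immediately by combining two earlier results, so the plan is to spell this out carefully. By Lemma~\ref{lem:inifnite_int_2}, $\phi$ is real-valued, strictly negative, and strictly increasing on $(-\infty, x^-)$, so the (monotone) limit $\ell \coloneqq \lim_{x\to-\infty}\phi(x)$ exists in $[-\infty, 0]$.

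The first step is to rule out $\ell = -\infty$. This is given by Lemma~\ref{lem:solit_reg_lim}, which asserts that $\phi \in \mathcal{C}^2_b$ and, more importantly, $\lim_{|x|\to\infty}|\phi(x)| = 1$. In particular, $|\phi(x)| \to 1$ as $x \to -\infty$, so $\ell$ is finite and $|\ell| = 1$.

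Since $\phi$ is negative on $(-\infty, x^-)$, one has $\phi(x) = -|\phi(x)|$ there, so
\begin{equation*}
    \lim_{x\to-\infty}\phi(x) = -\lim_{x\to-\infty}|\phi(x)| = -1,
\end{equation*}
which is the claim.

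There is essentially no obstacle here: the hard work (regularity, absence of zeros of $\phi'$, sign of $\phi$, existence and boundedness of the limit) has been done in Lemmas~\ref{lem:solit_reg_lim}, \ref{lem:energy_ode}, \ref{lem:infinite_int}, \ref{lem:x_minus_prop} and \ref{lem:inifnite_int_2}. If one wanted a self-contained argument avoiding Lemma~\ref{lem:solit_reg_lim}, one could alternatively invoke the energy identity \eqref{eq:energy_ode}: since $\phi$ is monotone and bounded on $(-\infty, x^-)$ (boundedness from $\phi < 0 < 1 \le |\phi-1|^2 + \ldots$ together with $|\phi|-1 \in L^2$ via Lemma~\ref{lem:E_logGP_descr}), one extracts $x_n \to -\infty$ with $\phi'(x_n) \to 0$, and \eqref{eq:energy_ode} forces $|\ell|^2\ln|\ell|^2 - |\ell|^2 + 1 = 0$, hence $|\ell| = 1$ and $\ell = -1$. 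Either route gives the conclusion in a couple of lines.
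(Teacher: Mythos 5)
Your proof is correct. The only (minor) difference from the paper's argument is which earlier fact you lean on: the paper notes that $\phi$ is increasing and negative near $-\infty$, hence has a limit $\ell\le 0$, and then uses $\abs{\phi}-1\in L^2$ (so $\phi+1\in L^2((-\infty,0))$) to force $\ell=-1$, since a nonzero limit of $\phi+1$ would contradict square-integrability on a half-line. You instead invoke the pointwise limit $\lim_{\abs{x}\to\infty}\abs{\phi(x)}=1$ already established in Lemma~\ref{lem:solit_reg_lim}, combined with the negativity of $\phi$ from Lemma~\ref{lem:inifnite_int_2}; this is equally valid and arguably more direct, since it does not even need the monotonicity to guarantee existence of the limit. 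Your alternative route via the energy identity \eqref{eq:energy_ode} also works, though the parenthetical justification of boundedness there is garbled — boundedness is anyway immediate from $\phi\in\mathcal{C}^2_b$ in Lemma~\ref{lem:solit_reg_lim}, so nothing is lost.
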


\begin{proof}
 We have seen that   $\phi$ is increasing, thus it has a limit at $-
 \infty$. From Lemma~\ref{lem:inifnite_int_2}, we know it is
 negative. Moreover, $\abs{\phi} - 1 \in L^2$, which means that $\phi
 + 1 \in L^2 ((- \infty, 0))$. The conclusion easily follows. 
\end{proof}

The conclusion of all the previous lemmas is the following:

\begin{cor} \label{cor:necessary_cond}
    If $u$ is a non-constant traveling wave  solution to \eqref{logGP}
    of the form \eqref{eq:trav_wave} with $c=0$, then there exists $\theta \in \mathbb{R}$ such that $e^{- i \theta} \phi$ is a real-valued,  increasing, $\mathcal{C}^2$ function with values in $(-1, 1)$ which vanishes at a unique point $x_0$.
    Moreover, $\phi_0 (x) = e^{- i \theta} \phi (x + x_0)$ also
    satisfies \eqref{logGP},
    \begin{equation*}
        \lim_{x \rightarrow \pm \infty} \phi_0 (x) = \pm 1,
    \end{equation*}
    and
    \begin{equation} \label{eq:energy_ode_compl}
        (\phi_0')^2 = \lambda \Bigl( \phi_0^2 \ln{\phi_0^2} - \phi_0^2 + 1 \Bigr) \qquad \text{ on } \mathbb{R}.
    \end{equation}
\end{cor}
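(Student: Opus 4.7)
The plan is to package the preceding lemmas into the stated corollary. First, I invoke Lemma~\ref{lem:infinite_int} to produce $x^-\in\mathbb{R}\cup\{-\infty\}$ and $\theta\in[0,2\pi)$ such that $e^{-i\theta}\phi$ is real-valued and positive on $[x^-,\infty)$. Since the nonlinearity in \eqref{eq:solit_phi_ode} depends only on $\abs{\phi}$, the function $e^{-i\theta}\phi$ again solves \eqref{eq:solit_phi_ode}, so I may work with it in place of $\phi$ without loss of generality, as suggested in the remark following Lemma~\ref{lem:infinite_int}.

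Next, Lemma~\ref{lem:x_minus_prop} shows that $x^-$ is finite, that $e^{-i\theta}\phi\in(0,1)$ with positive derivative on $(x^-,\infty)$, and that $e^{-i\theta}\phi(x^-)=0$ with $(e^{-i\theta}\phi)'(x^-)=1$. Lemma~\ref{lem:inifnite_int_2} continues the picture to the left, showing that $e^{-i\theta}\phi$ is real-valued, strictly negative, and increasing on $(-\infty,x^-)$; the immediately following lemma supplies the limit $-1$ at $-\infty$, while Lemma~\ref{lem:solit_reg_lim} gives the limit $+1$ at $+\infty$ together with global $\mathcal{C}^2$ regularity. Combining these facts, $e^{-i\theta}\phi$ is a real-valued, strictly increasing $\mathcal{C}^2$ map $\mathbb{R}\to(-1,1)$ whose unique zero is $x_0\coloneqq x^-$, which yields the first assertion of the corollary.

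Finally, I define $\phi_0(x)\coloneqq e^{-i\theta}\phi(x+x_0)$. Because \eqref{eq:solit_phi_ode} is autonomous and invariant under multiplication by unimodular constants, $\phi_0$ solves the same ODE, hence $u(t,x)\coloneqq\phi_0(x)$ (corresponding to $\omega=c=0$ in \eqref{eq:trav_wave}) is a stationary solution to \eqref{logGP}. The asymptotics $\lim_{x\to\pm\infty}\phi_0(x)=\pm1$ are inherited from the monotonicity and sign analysis above, and the identity \eqref{eq:energy_ode_compl} is exactly the conclusion of Lemma~\ref{lem:energy_ode} rewritten for the real-valued $\phi_0$.

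There is no substantive obstacle at this stage: the genuinely delicate work — excluding nontrivial phase rotation via the $L^2$-integrability of $\phi'$, pinning down the boundary data $(\phi,\phi')(x^-)=(0,1)$, and identifying the sign and monotonicity of $\phi$ on either side of $x^-$ — has already been carried out in Lemmas~\ref{lem:infinite_int} through \ref{lem:inifnite_int_2}. The present step consists purely in choosing the gauge $e^{-i\theta}$, translating $x^-$ to the origin, and restating the first-order conservation law from Lemma~\ref{lem:energy_ode}.
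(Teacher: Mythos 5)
Your proposal is correct and matches the paper exactly: the paper gives no separate proof for this corollary, presenting it as ``the conclusion of all the previous lemmas,'' and your assembly of Lemmas~\ref{lem:infinite_int}, \ref{lem:x_minus_prop}, \ref{lem:inifnite_int_2}, the limit lemma at $-\infty$, \ref{lem:solit_reg_lim}, and \ref{lem:energy_ode}, together with the gauge and translation invariance of the autonomous ODE, is precisely the intended argument.
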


\subsection{Analysis of the new ODE and proof of Theorem \ref{th:sol_wave}}

From Corollary~\ref{cor:necessary_cond}, $\phi_0$ vanishes at $x=0$,
satisfies \eqref{eq:energy_ode_compl} and is strictly increasing,
i.e. $\phi_0' > 0$. Thus, $\phi_0$ satisfies the ODE 
\begin{equation} \label{eq:new_ode}
    \phi_0' = \sqrt{\lambda} \sqrt{\phi_0^2 \ln{\phi_0^2} - \phi_0^2 + 1} \qquad \text{ on } \mathbb{R}.
\end{equation}
The uniqueness of this function is the topic of the following lemma.

\begin{lem} \label{lem:exist_unique_new_ode}
    There exists a unique function $\phi_0$ satisfying \eqref{eq:new_ode} with the initial data $\phi_0 (0) = 0$. It is defined on $\mathbb{R}$ and satisfies
    \begin{equation*}
        \lim_{x \rightarrow \pm \infty} \phi_0 (x) = \pm 1.
    \end{equation*}
    Moreover, $\phi_0 \in E_\textnormal{logGP}$.
\end{lem}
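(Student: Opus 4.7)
The plan is to view \eqref{eq:new_ode} as a scalar autonomous ODE $\phi_0' = \sqrt{\lambda G(\phi_0)}$, with $G(y) \coloneqq y^2 \ln y^2 - y^2 + 1$ (and $G(0) = 1$). The key observations about $G$ are that $G \geq 0$ on $\mathbb{R}$, that $G = 0$ exactly at $y = \pm 1$, that $G$ is $\mathcal{C}^1$ on $\mathbb{R}$ with $G'(\pm 1) = 0$ and $G''(\pm 1) = 4$, and consequently that $\sqrt{G(y)} \sim \sqrt{2}\,|y \mp 1|$ as $y \to \pm 1$. In particular $y \mapsto \sqrt{\lambda G(y)}$ is locally Lipschitz on all of $\mathbb{R}$, so Cauchy--Lipschitz provides a unique maximal solution $\phi_0$ with $\phi_0(0) = 0$, defined on some interval $(x_-, x_+) \ni 0$.

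First I would show that $\phi_0$ takes values in $(-1, 1)$ and is strictly increasing. Since $\sqrt{\lambda G(0)} = \sqrt{\lambda} > 0$, $\phi_0'(0) > 0$, and as long as $\phi_0(x) \in (-1, 1)$ we keep $\phi_0' > 0$. The constants $\pm 1$ are themselves solutions of the ODE, so by uniqueness $\phi_0$ cannot cross $\pm 1$, which confines $\phi_0$ to $(-1, 1)$ throughout $(x_-, x_+)$.

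Second, I would prove $x_\pm = \pm\infty$ by separation of variables. Inverting the monotone map yields
\begin{equation*}
  x = \frac{1}{\sqrt{\lambda}} \int_0^{\phi_0(x)} \frac{\diff y}{\sqrt{G(y)}}.
\end{equation*}
Since $1/\sqrt{G(y)}$ behaves like $1/(\sqrt{2}\,|y \mp 1|)$ near $\pm 1$, it is not integrable there, so $|x| \to \infty$ as $\phi_0 \to \pm 1$. Therefore $\phi_0$ is defined on $\mathbb{R}$ with $\lim_{x \to \pm \infty} \phi_0(x) = \pm 1$.

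Third, to check $\phi_0 \in E_\textnormal{logGP}$, I would use $|\phi_0'|^2 = \lambda G(\phi_0)$ and the change of variable $y = \phi_0(x)$:
\begin{equation*}
  \int_{\mathbb{R}} \bigl( |\phi_0'|^2 + \lambda G(\phi_0) \bigr) \diff x = 2 \sqrt{\lambda} \int_{-1}^1 \sqrt{G(y)} \, \diff y,
\end{equation*}
which is finite since $\sqrt{G}$ is continuous on $[-1, 1]$. Combined with $\phi_0 \in \mathcal{C}^2(\mathbb{R}) \subset H^1_\textnormal{loc}(\mathbb{R})$, this yields $\mathcal{E}_\textnormal{logGP}(\phi_0) < \infty$. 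The only mildly delicate point is the Lipschitz regularity of $\sqrt{\lambda G}$ near the zeros $y = \pm 1$, but it is genuine because these are second-order zeros of $G$, so no branching in finite time can occur and uniqueness holds globally.
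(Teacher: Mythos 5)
Your proof is correct, and its first half --- applying Cauchy--Lipschitz to $\phi_0' = \sqrt{\lambda G(\phi_0)}$ after observing that $\sqrt{G}$ is locally Lipschitz because its zeros at $\pm 1$ are nondegenerate second-order zeros, then confining $\phi_0$ to $(-1,1)$ by uniqueness against the constant solutions $\pm 1$ --- is exactly the paper's argument. You diverge in the second half. For globality and the limits, the paper establishes linear lower bounds $h(y)\ge c_1(y+1)$ on $[-1,0]$ and $h(y)\ge c_1(1-y)$ on $[0,1]$ (with $h=\sqrt{G}$) and integrates them to obtain \emph{exponential} convergence of $\phi_0$ to $\pm 1$; you instead separate variables and use the non-integrability of $1/\sqrt{G}$ at $\pm 1$, which is equally valid since the solution is trapped in a compact set and hence cannot blow up. For the membership $\phi_0\in E_{\textnormal{logGP}}$, the paper uses its exponential bounds to get $\abs{\phi_0}-1\in L^2$ and $\phi_0'\in L^2$ and then invokes Lemma~\ref{lem:E_logGP_descr}, whereas you exploit the first integral $\abs{\phi_0'}^2=\lambda G(\phi_0)$ and the change of variables $y=\phi_0(x)$ to compute the energy in closed form, $\mathcal{E}_{\textnormal{logGP}}(\phi_0)=2\sqrt{\lambda}\int_{-1}^{1}\sqrt{G(y)}\diff y<\infty$, which together with $\phi_0\in H^1_{\textnormal{loc}}$ is precisely the definition of the energy space. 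Your route is shorter and yields an exact value for the energy of the black soliton; the paper's route buys the quantitative exponential rate of approach to $\pm 1$, a device it reuses in Lemma~\ref{lem:cauchy_rho_0} to prove $\rho_c-1\in H^1$ for the traveling waves. Both are complete proofs of the stated lemma.
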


\begin{proof}
    We already know that $f(x) = x \ln{x} - x + 1 \geq 0$ for all $x
    \geq 0$, and $f$ vanishes only at $x=1$. Moreover, by simple
    computations, we check that $g(x) = f(x^2)$ is $\mathcal{C}^1$ on
    $\mathbb{R}$, and $\mathcal C^2$ on $\R\setminus\{0\}$. We 
    also compute $f' (1) = 0$ and $f'' (1) = 1$, hence
    \begin{equation*}
      g(x)\Eq x {\pm 1} \frac{1}{2}\(x^2-1\)^2.
    \end{equation*}

    From these facts, we deduce that $h = \sqrt{g}$ is a
    $\mathcal{C}^0 (\mathbb{R}) \cap \mathcal{C}^1 (\mathbb{R}
    \setminus \{ -1, 1 \})$ function such that $h(x) \sim_{x
      \rightarrow \pm 1} \abs{x^2 - 1}/\sqrt 2$. Therefore, it is
    $\mathcal{C}^{0, 1}$ locally on $\mathbb{R}$, and we may invoke
    Cauchy-Lipschitz Theorem: there exists a unique $\phi_0$ satisfying
    $\phi_0' = \sqrt{\lambda} h(\phi_0)$ with the initial condition
    $\phi_0 (0) = 0$, on a maximal interval $I\ni 0$ of existence. 

    Then, $\phi_0$ cannot reach $1$ or $-1$, because the constant
    function $1$ and $-1$ are both solutions to this ODE, and it would
    contradict the uniqueness in the Cauchy-Lipschitz Theorem. Since $\phi_0$ is
    continuous, we infer that $\phi_0 (x) \in (-1, 1)$ for all $x
    \in \mathbb{R}$. Moreover, we can easily prove that there exists
    $c_1> 0$ such that 
    \begin{align*}
        h(y) &\ge c_1 (y + 1), \quad\forall y \in [-1, 0], \\
        h(y) &\ge c_1 (1 - y),\quad \forall y \in [0, 1].
    \end{align*}
    Since we know that $\phi_0 (x) \in (-1, 0]$ for all $x \leq 0$, we can estimate
    \begin{equation*}
        \phi_0' (x) \geq c_1 \sqrt{\lambda} (\phi_0 (x) + 1), \qquad \forall x \leq 0.
    \end{equation*}
    By integrating backward, we get for all $x \leq 0$,
    \begin{align*}
        -1 < \phi_0 (x) &\leq -1 + e^{\sqrt{\lambda} c_1 x}, \\
        0 < \phi_0' (x) &\leq c_1 \sqrt{\lambda} e^{\sqrt{\lambda} c_1 x}.
    \end{align*}
    Similarly, we also have for all $x \geq 0$
    \begin{align*}
        1 - e^{- \sqrt{\lambda} c_1 x} &\leq \phi_0 (x) < -1, \\
        0 < \phi_0' (x) &\leq c_1 \sqrt{\lambda} e^{\sqrt{\lambda} c_1 x}.
    \end{align*}
    Those estimates prove the expected limits at $\pm \infty$, and also
    show that $\phi_0' \in L^2$ together with $\abs{\phi_0} - 1 \in L^2$. We
    conclude that $\phi_0 \in E_\textnormal{logGP}$ by
    Lemma~\ref{lem:E_logGP_descr}.  
\end{proof}

The previous result obviously proves that we have a set of solitary
waves for \eqref{logGP}. 

\begin{cor} \label{cor:set_soli_waves}
    For any $\theta \in \mathbb{R}$ and any $x_0 \in \mathbb{R}$,
    $e^{i \theta} \phi_0 (. - x_0) \in E_\textnormal{logGP}$ is a
    solitary (stationary) wave of \eqref{logGP}.
\end{cor}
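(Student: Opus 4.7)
The plan is to verify that $\psi(x) \coloneqq e^{i\theta}\phi_0(x-x_0)$ satisfies the stationary equation \eqref{eq:solit_phi_ode} and lies in $E_\textnormal{logGP}$; then the time-independent ansatz $u(t,x) \coloneqq \psi(x)$ will be, by inspection, a classical (and hence weak) solution of \eqref{logGP}.

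First I would upgrade the first-order ODE \eqref{eq:new_ode} satisfied by $\phi_0$ to the second-order equation \eqref{eq:solit_phi_ode}. Squaring \eqref{eq:new_ode} yields $(\phi_0')^2 = \lambda g(\phi_0)$ with $g(y) = y^2 \ln y^2 - y^2 + 1$, and differentiating in $x$ using $g'(y) = 2y \ln y^2$ gives $2\phi_0' \phi_0'' = 2\lambda \phi_0 (\ln \phi_0^2)\, \phi_0'$. The analysis of $g$ (equivalently $F$ from the introduction) shows that $g(y)=0$ only at $y=\pm 1$; since Lemma~\ref{lem:exist_unique_new_ode} guarantees $\phi_0(\R)\subset(-1,1)$, the factor $\phi_0' = \sqrt{\lambda g(\phi_0)}$ is strictly positive on $\R$, allowing division to yield $\phi_0'' = \lambda \phi_0 \ln \phi_0^2$ on $\R\setminus\{0\}$. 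The equation \eqref{eq:solit_phi_ode} is manifestly invariant under translation in $x$ and under multiplication by unimodular constants (both sides are covariant under $\phi \mapsto e^{i\theta}\phi$ because $|\phi|^2$ is phase-invariant), so the identity transfers from $\phi_0$ to $\psi$.

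The membership $\psi\in E_\textnormal{logGP}$ is a direct consequence of Lemma~\ref{lem:exist_unique_new_ode} combined with the characterisation of Lemma~\ref{lem:E_logGP_descr}: $\nabla\psi$ and $|\psi|-1=|\phi_0(\cdot-x_0)|-1$ inherit their $L^2$ integrability from $\phi_0$, unaffected by the translation and the constant phase. Once both points are in place, $u(t,x)\coloneqq\psi(x)$ has $\partial_t u=0$ and $\Delta u=\psi''(x)$, so $i\partial_t u+\Delta u=\lambda u\ln|u|^2$ reduces exactly to \eqref{eq:solit_phi_ode} for $\psi$; uniqueness from Theorem~\ref{th:regu_sol} then identifies this as the unique weak solution of \eqref{logGP} with initial datum $\psi$.

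The one delicate point, which I expect to be the main (minor) obstacle, is the behaviour at the unique zero of $\psi$ at $x=x_0$, where the logarithm is singular and the division by $\phi_0'$ performed above is legitimate but the pointwise differentiation of $g(\phi_0)$ requires care. Here I would use that $y\mapsto y\ln y^2$ extends continuously to $0$ with value $0$, and that $\phi_0\in\mathcal C^1(\R)$ with $\phi_0'(0)=\sqrt{\lambda}\ne 0$ by \eqref{eq:new_ode}; both sides of \eqref{eq:solit_phi_ode} then extend continuously through $x_0$ and coincide everywhere else, so the equation holds on $\R$ in the distributional sense (and classically on $\R\setminus\{x_0\}$). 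This suffices to conclude that $u$ is a weak solution in the sense of the paper's definition, completing the proof.
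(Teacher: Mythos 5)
Your proposal is correct and follows the route the paper itself intends: the paper dispatches this corollary with ``the previous result obviously proves\dots'', relying exactly on the facts you verify, namely that \eqref{eq:new_ode} upgrades to \eqref{eq:solit_phi_ode} by differentiating $(\phi_0')^2=\lambda(\phi_0^2\ln\phi_0^2-\phi_0^2+1)$ and dividing by the everywhere-positive $\phi_0'$, that the equation is invariant under translations and constant phases, and that membership in $E_\textnormal{logGP}$ from Lemma~\ref{lem:exist_unique_new_ode} is preserved by these operations. Your extra care at the zero of $\psi$ is sound (and in fact harmless, since $g(y)=y^2\ln y^2-y^2+1$ is $\mathcal C^1$ at $0$ with $g'(0)=0$, so the chain rule applies there directly); the appeal to Theorem~\ref{th:regu_sol} at the end is not needed but does no harm.
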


It remains to show that they are the only possible solitary waves in
order to prove Theorem~\ref{th:sol_wave} in the case $c=0$.

\begin{proof}[Proof of Theorem \ref{th:sol_wave}: case $c=0$.]
    Let $\phi \in E_\textnormal{logGP}$ and $u$ be a traveling wave of
    the form \eqref{eq:trav_wave} with $c=0$. From Corollary
    \ref{cor:necessary_cond}, we can find $\theta \in \mathbb{R}$ and
    $x_0 \in \mathbb{R}$ such that $e^{- i \theta} \phi (. + x_0)$ is
    a real-valued, increasing, $\mathcal{C}^2$ function
    with values in $(-1, 1)$, which vanishes at $0$, and satisfies
    \eqref{eq:energy_ode_compl}, i.e. \eqref{eq:new_ode} by
    positivity of its derivative. 
    It is therefore $\phi_0$, defined in
    Lemma~\ref{lem:exist_unique_new_ode} by uniqueness given by the
    same lemma: $e^{- i \theta} \phi (. + x_0) = \phi_0$, i.e. $\phi =
    e^{i \theta} \phi_0 (. - x_0)$. 
\end{proof}

%%%%%%%%%%%%%%%%%%%%%%%%%%%%%%%%%%%%%%%%%%%%%%%%%%%%%%%%%%%%%%%%%%%%%%%

\section{Traveling waves in the one-dimensional case}
\label{sec:traveling}

\subsection{Admissible velocities}
\label{sec:velocities}

The goal of this section is to prove Theorem~\ref{th:trav_waves_1d},
and characterize velocities of nontrivial (that is, nonconstant)
traveling waves. We assume that there exists a traveling
wave of the form \eqref{eq:trav_wave} solution to \eqref{logGP},
i.e. $\phi \in E_\textnormal{logGP}$ solution to
\eqref{eq:ode_phi_trav}. 

\begin{lem} \label{lem:another_energy_eq}
 The function   $\eta = 1 - \abs{\phi}^2$ satisfies, on $\mathbb{R}$,
    \begin{equation*}
        \frac{1}{2} (\eta')^2 = h_c (\eta),
      \end{equation*}
      where
\begin{equation*}
    h_c (y) \coloneqq \lambda \Bigl( (1-y)^2 \ln(1-y)^2 - (1-y)^2 + 1 \Bigr) - \frac{2 \lambda +c^2}{2} y^2.
\end{equation*}
\end{lem}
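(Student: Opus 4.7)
The plan is to derive the identity by combining two first integrals of \eqref{eq:ode_phi_trav}, exploiting the boundary conditions $\phi'(x)\to 0$ and $|\phi(x)|\to 1$ as $|x|\to\infty$ provided by Lemma~\ref{lem:solit_reg_lim}. Note that $\eta=1-|\phi|^2\in\mathcal C^2_b(\R)$, so all manipulations below make sense pointwise.

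\smallbreak

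\emph{Step 1 (imaginary first integral).} Multiply \eqref{eq:ode_phi_trav} by $\bar\phi$ and take the imaginary part. Since the right-hand side $\lambda|\phi|^2\ln|\phi|^2$ is real, and since
\[
\Im(-ic\phi'\bar\phi) = -c\,\Re(\phi'\bar\phi) = -\frac{c}{2}(|\phi|^2)' = \frac{c\eta'}{2},\qquad \Im(\phi''\bar\phi)=\bigl(\Im(\phi'\bar\phi)\bigr)',
\]
(using $\Im(\phi'\overline{\phi'})=0$), one obtains $\bigl(\Im(\phi'\bar\phi)+c\eta/2\bigr)'=0$. Integration and the fact that $\phi'\bar\phi\to 0$, $\eta\to 0$ at infinity yield $\Im(\phi'\bar\phi)=-c\eta/2$, hence $\Im(\phi\bar\phi')=c\eta/2$.

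\smallbreak

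\emph{Step 2 (real first integral).} Multiply \eqref{eq:ode_phi_trav} by $\bar\phi'$ and take the real part. Since $\Re(-ic|\phi'|^2)=0$, $\Re(\phi''\bar\phi')=\tfrac12(|\phi'|^2)'$, and $\Re(\phi\bar\phi'\ln|\phi|^2)=\tfrac12(|\phi|^2)'\ln|\phi|^2$, we recognize
\[
(|\phi'|^2)' = \lambda\bigl(|\phi|^2\ln|\phi|^2-|\phi|^2\bigr)'.
\]
Integration using $|\phi'|\to 0$ and $|\phi|\to 1$ at infinity (so $|\phi|^2\ln|\phi|^2-|\phi|^2\to -1$) gives
\[
|\phi'|^2 = \lambda\bigl((1-\eta)\ln(1-\eta)+\eta\bigr).
\]

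\smallbreak

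\emph{Step 3 (combining).} From $\Re(\phi\bar\phi')=-\eta'/2$ (direct from $\eta=1-\phi\bar\phi$) and Step~1,
\[
|\phi\bar\phi'|^2 = \frac{(\eta')^2}{4}+\frac{c^2\eta^2}{4}.
\]
On the other hand $|\phi\bar\phi'|^2 = |\phi|^2|\phi'|^2 = (1-\eta)|\phi'|^2$, so Step~2 gives
\[
\frac{(\eta')^2}{4}+\frac{c^2\eta^2}{4} = \lambda(1-\eta)^2\ln(1-\eta)+\lambda\eta(1-\eta).
\]
Multiplying by $2$, writing $2\ln(1-\eta)=\ln(1-\eta)^2$, and expanding $\lambda\bigl((1-\eta)^2\ln(1-\eta)^2-(1-\eta)^2+1\bigr)-\tfrac{2\lambda+c^2}{2}\eta^2 = 2\lambda(1-\eta)^2\ln(1-\eta)+2\lambda\eta(1-\eta)-\tfrac{c^2}{2}\eta^2$, one recognizes the right-hand side as $h_c(\eta)$, yielding $\tfrac12(\eta')^2=h_c(\eta)$.

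\smallbreak

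The calculation is essentially routine once the two first integrals are in hand; the only mild subtlety is that $\phi$ may have zeros (where $\eta=1$ and $\ln|\phi|^2=-\infty$), but since we work with $\eta\in\mathcal C^2_b$ and the products $|\phi|^2\ln|\phi|^2$ and $(1-\eta)\ln(1-\eta)$ extend continuously to $0$ at these points, the identity persists by continuity. The main hinge of the argument is identifying the correct combinations $\phi$ and $\phi'$ to multiply by so that the $c$-dependent contributions assemble into the quadratic term $-c^2\eta^2/2$.
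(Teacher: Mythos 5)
Your proof is correct. Steps 1 and 2 are exactly the paper's two first integrals, written in complex rather than real notation: the paper works with $\phi=\phi_1+i\phi_2$, derives the Wronskian identity $\phi_1'\phi_2-\phi_1\phi_2'=\frac{c}{2}\eta$ (your $\Im(\phi'\bar\phi)=-c\eta/2$) and the energy identity $|\phi'|^2=\lambda\(|\phi|^2\ln|\phi|^2-|\phi|^2+1\)$, fixing the constants of integration by integrability/decay at infinity just as you do. Where you genuinely diverge is the final assembly: the paper computes $\eta''=-2|\phi'|^2-2(\phi_1\phi_1''+\phi_2\phi_2'')$, substitutes the ODE and the Wronskian identity to recognize $\eta''=h_c'(\eta)$, and then performs a \emph{third} integration of $\eta''\eta'=(h_c(\eta))'$; you instead note that $\Re(\phi\bar\phi')=-\eta'/2$ and $\Im(\phi\bar\phi')=c\eta/2$ give $|\phi|^2|\phi'|^2=(\eta')^2/4+c^2\eta^2/4$ and plug in the energy identity, which is purely algebraic, avoids the extra integration and the need to identify $h_c'$, and is slightly shorter. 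Nothing is lost for the later uses of the lemma, which only invoke the first-order identity (though the paper's intermediate relation $\eta''=h_c'(\eta)$ is a mildly useful byproduct). Your closing remark about the zeros of $\phi$, where $\ln|\phi|^2$ is singular but every product involved extends continuously, addresses a point the paper passes over silently; note in passing that at such a zero your combined identity forces $c=0$, consistent with (and essentially reproving) Corollary~\ref{cor:trav_wave_never_vanish}.
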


\begin{proof}
 We follow the same lines as in \cite{BeGrSa08}.   Writing
 $\phi=\phi_1+i \phi_2$, \eqref{eq:ode_phi_trav} becomes 
    \begin{System} \label{eq:vR}
        \phi_1''+ c \phi_2' = \lambda \phi_1\ln\(\phi_1^2+\phi_2^2\),\\[5pt]
        \phi_2''- c \phi_1' = \lambda \phi_2\ln\(\phi_1^2 + \phi_2^2\).
    \end{System}
    % \begin{equation}\label{eq:vR}
    %     \left\{
    %         \begin{aligned}
    %             & v_1''+ c v_2' = v_1\ln\(v_1^2+v_2^2\),\\
    %     	    & v_2''- c v_1' = v_2\ln\(v_1^2+v_2^2\).
    %     	\end{aligned}
    % 	\right.
    % \end{equation}
    We multiply the first equation by $\phi_2$ and the second one by $\phi_1$, and we subtract in order to get:
    \begin{equation*}
        \( \phi_1' \phi_2 - \phi_1 \phi_2'\)' = -c (\phi_2 \phi_2' + \phi_1 \phi_1') = \frac{c}{2} \eta', \quad
        \eta \coloneqq 1-\abs{\phi}^2.
    \end{equation*}
    Moreover, we know that $\phi$ is bounded by
    Lemma~\ref{lem:solit_reg_lim}, thus $\phi_1' \phi_2, \phi_1
    \phi_2' \in L^2$, and $\eta \in L^2$ by using
    Lemma~\ref{lem:E_logGP_descr}. Thus,  integrating the above
    identity yields
    \begin{equation}\label{eq:wronskien}
       \phi_1' \phi_2 - \phi_1 \phi_2' = \frac{c}{2} \eta.
    \end{equation}

    Now, we multiply the first equation of \eqref{eq:vR} by $\phi_1'$, the second by $\phi_2'$ and we sum in order to get:
    \begin{equation*}
      \phi_1' \phi_1'' + \phi_2' \phi_2'' = \lambda \(\phi_1 \phi_1' + \phi_2 \phi_2'\)\ln \(\phi_1^2 + \phi_2^2\),
    \end{equation*}
    which can be written as
    \begin{equation*}
        \(\abs{\phi'}^2\)' = \lambda \(\abs{\phi}^2 \ln{\abs{\phi}^2} -
        \abs{\phi}^2 + 1\)'. 
    \end{equation*}
    We know that both $\abs{\phi'}^2$ and $\abs{\phi}^2 \ln{\abs{\phi}^2} - \abs{\phi}^2 + 1$ (which is positive and whose integral over $\mathbb{R}$ is bounded by $\mathcal{E}_\textnormal{logGP} (\phi)$) are in $L^1$, hence:
    \begin{equation*}
        \abs{\phi'}^2 = \lambda \bigl( \abs{\phi}^2 \ln{\abs{\phi}^2} - \abs{\phi}^2 + 1 \Bigr) \qquad \text{on } \mathbb{R}.
    \end{equation*}
    We deduce then
    \begin{align*}
        \eta'' &= -2|\phi'|^2 - 2 (\phi_1 \phi_1'' + \phi_2 \phi_2'')\\
            &= -2|\phi'|^2 -2 \phi_1 \(-c \phi_2' + \lambda \phi_1 \ln{\abs{\phi}^2}\) - 2 \phi_2 \( c \phi_1' + \lambda \phi_2 \ln{\abs{\phi}^2}\)\\
            &=-4 \lambda (1-\eta)\ln(1-\eta) - 2 \lambda \eta - c^2\eta = h_c' (\eta).
    \end{align*}
    Multiplying by $\eta'$, we obtain
    \begin{equation*}
        \frac{1}{2} \( (\eta')^2 \)' = ( h_c (\eta) )'. 
    \end{equation*}
    Since $\lim_{\pm \infty} \eta' = \lim_{\pm \infty} \eta = 0$ from
    Lemma~\ref{lem:solit_reg_lim}, we conclude by integrating.
\end{proof}

\begin{rem}
    This lemma may be extended to the case of any non-linearity $F(u)$
    with $F$ a continuous function on $\mathbb{C}$ of the form $F (u)
    = u \, f(\abs{u}^2)$ with $f(1) = 0$. 
\end{rem}

\begin{cor} \label{cor:trav_wave_never_vanish}
    If $c \neq 0$, then $\phi$ never vanishes.
\end{cor}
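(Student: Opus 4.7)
The plan is very short: I would exploit the Wronskian-type identity \eqref{eq:wronskien} established inside the proof of Lemma~\ref{lem:another_energy_eq}, namely
\begin{equation*}
  \phi_1' \phi_2 - \phi_1 \phi_2' = \frac{c}{2}\,\eta, \qquad \eta = 1 - |\phi|^2,
\end{equation*}
valid on all of $\mathbb{R}$ for any traveling wave $\phi = \phi_1 + i\phi_2 \in E_\textnormal{logGP}$ solution to \eqref{eq:ode_phi_trav}.

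I would argue by contradiction. Suppose there exists $x_0 \in \mathbb{R}$ with $\phi(x_0) = 0$, i.e.\ $\phi_1(x_0) = \phi_2(x_0) = 0$. Evaluating the above identity at $x_0$, the left-hand side vanishes, while $\eta(x_0) = 1 - |\phi(x_0)|^2 = 1$, so the identity forces $c/2 = 0$, contradicting the assumption $c \neq 0$.

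There is essentially no obstacle here: the entire content lies in having already extracted the identity \eqref{eq:wronskien}, which was obtained from \eqref{eq:vR} by multiplying the first equation by $\phi_2$, the second by $\phi_1$, subtracting, integrating (using that $\phi_1'\phi_2 - \phi_1\phi_2' \in L^1$, as both factors in each product lie in $L^2$ thanks to Lemma~\ref{lem:solit_reg_lim} and Lemma~\ref{lem:E_logGP_descr}, and that $\eta \to 0$ at $\pm\infty$ so the integration constant is zero). Once that identity is in hand, the non-vanishing of $\phi$ for $c\neq 0$ is immediate.
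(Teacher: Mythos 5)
Your argument is correct, but it is not the one the paper uses. The paper's proof goes through the full conclusion of Lemma~\ref{lem:another_energy_eq}: since $\frac12(\eta')^2=h_c(\eta)\ge 0$ everywhere while $h_c(1)=-\frac{c^2}{2}<0$ for $c\neq 0$, the value $\eta=1$ (i.e.\ $\phi=0$) is never attained. You instead stop at the intermediate Wronskian identity \eqref{eq:wronskien}, $\phi_1'\phi_2-\phi_1\phi_2'=\frac{c}{2}\eta$, and evaluate it at a hypothetical zero of $\phi$: the left side vanishes there while $\eta=1$, forcing $c=0$. Both proofs are one-liners resting on the same integration-by-parts step from the proof of Lemma~\ref{lem:another_energy_eq}; yours is slightly more economical in that it does not need the second energy identity $|\phi'|^2=\lambda(|\phi|^2\ln|\phi|^2-|\phi|^2+1)$ nor the combination producing $h_c$, whereas the paper's version has the advantage of reusing the function $h_c$ that is needed anyway for Corollary~\ref{cor:eta_zero_large_c} and the subsequent classification. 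One small inaccuracy in your justification of \eqref{eq:wronskien}: the products $\phi_1'\phi_2$ and $\phi_1\phi_2'$ are in $L^2$ (one factor in $L^2$, the other in $L^\infty$), not in $L^1$ because ``both factors lie in $L^2$'' --- $\phi_2$ itself is only bounded. This does not affect the argument, since the identity holds with an a priori constant of integration that must vanish because all other terms belong to $L^2(\R)$, which is exactly how the paper obtains \eqref{eq:wronskien}.
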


\begin{proof}
    It comes from the fact that $h_c (1) = - \frac{c^2}{2} < 0$ if $c
    \neq 0$, while $h_c (\eta) = \frac{1}{2} (\eta')^2 \geq
    0$. Therefore, $\eta (x) \neq 1$, that is $\phi (x) \neq 0$, for
    all $x \in \mathbb{R}$. 
\end{proof}

\begin{cor} \label{cor:eta_zero_large_c}
    If $c^2 \ge 2 \lambda$, then $\eta \equiv 0$.
\end{cor}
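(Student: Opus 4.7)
The starting point is the first integral from Lemma~\ref{lem:another_energy_eq},
\[
\tfrac{1}{2}(\eta')^2 = h_c(\eta), \qquad \eta := 1-|\phi|^2,
\]
together with the fact that $\eta,\eta'\to 0$ at $\pm\infty$, a consequence of Lemma~\ref{lem:solit_reg_lim} via $\eta = -(|\phi|-1)(|\phi|+1)$. Since $\lambda>0$, the hypothesis $c^2\ge 2\lambda$ forces $c\ne 0$, so Corollary~\ref{cor:trav_wave_never_vanish} applies and $\eta<1$ on $\R$; the second-order ODE $\eta''=h_c'(\eta)$, obtained by differentiating the energy identity, then involves $h_c'$ only in a region where it is smooth.

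The key analytic step is the sign analysis of $h_c$ under the standing assumption. Writing
\[
h_c(y) = \lambda\, g(y) - \tfrac{c^2-2\lambda}{2}\,y^2,\qquad g(y) := (1-y)^2\ln(1-y)^2 - (1-y)^2 + 1 - 2y^2,
\]
a direct computation gives $g(0)=g'(0)=0$ and $g''(y) = 2\ln(1-y)^2$. Hence $g$ is strictly concave on $(0,1)$ and strictly convex on $(-\infty,0)$; combined with $g(0)=g'(0)=0$, this yields $g<0$ on $(0,1)$ and $g>0$ on $(-\infty,0)$. Consequently $h_c(y)<0$ for every $y\in(0,1)$, and since $h_c(\eta) = \tfrac{1}{2}(\eta')^2\ge 0$, this already forces $\eta(x)\le 0$ for every $x\in\R$.

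I then set up the following dichotomy. If $\eta(x_0)=0$ at some $x_0\in\R$, the energy identity gives $\eta'(x_0)=0$; since $h_c'$ is smooth near $0$, Cauchy--Lipschitz applied to $\eta''=h_c'(\eta)$ shows $\eta\equiv 0$ on a neighborhood of $x_0$. The set $\{x\in\R:\eta(x)=0\}$ is therefore both open and closed, so either empty or equal to $\R$; the latter case is exactly the conclusion. It remains to exclude the former, i.e.\ $\eta<0$ on all of $\R$. If $c^2>2\lambda$, then $h_c''(0)=2\lambda-c^2<0$, so $h_c<0$ in a punctured neighborhood of $0$; combined with $h_c(\eta)\ge 0$ and $\eta\to 0$ at infinity, this forces $\eta$ to vanish for all sufficiently large $|x|$, contradicting $\eta<0$. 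If $c^2=2\lambda$, the convexity analysis above gives $h_c(y)>0$ for every $y<0$, so $\eta'$ never vanishes, whence $\eta$ is strictly monotone; but a strictly monotone function on $\R$ cannot tend to the same limit at both $+\infty$ and $-\infty$, a contradiction.

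The main obstacle I expect is the borderline case $c^2=2\lambda$: there $h_c''(0)=0$ and the equilibrium $(0,0)$ of the planar system $(\eta,\eta')$ is non-hyperbolic, so a brute-force stable/unstable manifold argument is unavailable. The trick above sidesteps this difficulty by exploiting the \emph{global} sign of $h_c$ on $(-\infty,0)$, which converts the non-vanishing of $\eta'$ into a monotonicity obstruction against the two-sided boundary condition $\eta(\pm\infty)=0$.
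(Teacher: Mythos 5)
Your proof is correct, and its engine is the same as the paper's: everything reduces to the sign of the second derivative of the potential, your $g''(y)=2\ln(1-y)^2$ being, up to the affine correction $2\lambda-c^2$, the paper's computation $h_c''(y)=4\lambda\ln(1-y)$ in the borderline case. The packaging is genuinely different, though. Your decomposition $h_c=\lambda g-\tfrac{c^2-2\lambda}{2}y^2$ yields $h_c<0$ on $(0,1)$, hence $\eta\le 0$, uniformly for all $c^2\ge 2\lambda$, and you then finish with an open--closed connectedness argument plus (for $c^2=2\lambda$) strict monotonicity of $\eta$. The paper instead splits from the start: for $c^2=2\lambda$ it shows $h_c$ is strictly decreasing on $(-\infty,1)$ and evaluates $h_c$ at a global extremum of $|\eta|$ (which exists since $\eta\to0$ at infinity), killing both signs of $\eta$ at once; for $c^2>2\lambda$ it uses only the local negativity of $h_c$ near $0$. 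Your route costs a little more (the separate $\eta\le0$ step and the connectedness argument) but buys a uniform treatment of the potential and avoids invoking the existence of an extremum. One imprecision to fix: the identity $\eta''=h_c'(\eta)$, which you need for the Cauchy--Lipschitz step, is \emph{not} obtained by differentiating the first integral --- that only gives it where $\eta'\neq0$, and the point at which you apply it is precisely one where $\eta'=0$. It is, however, established directly from the system in the proof of Lemma~\ref{lem:another_energy_eq} (namely $\eta''=-4\lambda(1-\eta)\ln(1-\eta)-(2\lambda+c^2)\eta=h_c'(\eta)$), so you should cite it from there; with that correction the argument is complete.
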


\begin{proof}
    The limiting case $c^2=2\lambda$ is treated like in
    \cite{BeGrSa08}: if $\eta$ is not identically zero, then in view
    of Lemma~\ref{lem:solit_reg_lim}, by
    translation invariance, we may assume
    \begin{equation*}
      |\eta(0)|=\max\{|\eta(x)|,\ x\in \R\}>0,\quad \eta'(0)=0.
    \end{equation*}
In view of Lemma~\ref{lem:another_energy_eq}, $h_c(\eta(0))=0$. On the
other hand, Corollary~\ref{cor:trav_wave_never_vanish} implies that
$1-\eta(x)>0$ for all $x\in \R$, and direct computation yields
\begin{equation*}
  h''_c(y) =4\lambda\ln(1-y),\quad 0\le y<1.
\end{equation*}
Therefore, $h''_c(y)$ is positive for $y<0$, negative for $0<y<1$. As
$h'_c(0)=0$, $h'_c$ is negative on $(-\infty,0)\cup (0,1)$: $h_c$ is
decreasing on $(-\infty,1)$.
Since $h_c(0)=0$, the only zero of $h_c$ on $[0,1)$ is the origin, and so
$\eta(0)=0$, hence a contradiction: $\eta$ is identically zero.
\smallbreak

    Suppose now $c^2>2\lambda$. Since $h_c(0)=h_c'(0)=0$, and
    $h_c''(0)= 2\lambda-c^2$,
    there exists $\varepsilon > 0$ such that $h_c (y) < 0$ for all $y
    \in [- \varepsilon, \varepsilon] \setminus \{ 0 \}$. On the other
    hand, $h_c (\eta)$ is  continuous, nonnegative (from
    Lemma~\ref{lem:another_energy_eq}) and tends to $0$ at $\pm
    \infty$. The  conclusion follows easily. 
\end{proof}

\begin{lem}
    If $c^2 \ge 2 \lambda$, then $\phi$ is constant.
\end{lem}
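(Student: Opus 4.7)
The plan is to combine Corollary~\ref{cor:eta_zero_large_c} with the pointwise energy identity for $|\phi'|^2$ already obtained in the proof of Lemma~\ref{lem:another_energy_eq}. By Corollary~\ref{cor:eta_zero_large_c}, the hypothesis $c^2 \ge 2\lambda$ forces $\eta \equiv 0$, i.e.\ $|\phi(x)|^2 = 1$ for every $x \in \R$.

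Next I would substitute this into the identity
\begin{equation*}
  |\phi'|^2 = \lambda\bigl( |\phi|^2 \ln |\phi|^2 - |\phi|^2 + 1\bigr),
\end{equation*}
which was established on $\R$ during the proof of Lemma~\ref{lem:another_energy_eq}. Since $y \ln y - y + 1 = 0$ at $y = 1$, the right-hand side vanishes identically. Hence $\phi' \equiv 0$ on $\R$, so $\phi$ is constant.

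There is essentially no obstacle here: everything needed has been set up in the preceding results. The only minor point to double-check is that the identity for $|\phi'|^2$ is indeed available under the sole assumption $\phi \in E_\textnormal{logGP}$ solving \eqref{eq:ode_phi_trav}, which is the case since its derivation only used $\abs{\phi'}^2, |\phi|^2\ln|\phi|^2 -|\phi|^2+1 \in L^1$ and the boundedness of $\phi$ coming from Lemma~\ref{lem:solit_reg_lim}. Alternatively, one could argue via the polar decomposition: $|\phi|=1$ allows writing $\phi = e^{i\theta}$ with $\theta \in \mathcal{C}^2(\R)$ (since $\phi$ never vanishes), whence $\nabla\phi \in L^2$ implies $\theta' \in L^2$; plugging into \eqref{eq:ode_phi_trav} and separating real and imaginary parts gives $\theta'' = 0$ and $(\theta')^2 = -c\theta'$, forcing $\theta' \equiv 0$ (since $\theta' \in L^2$ rules out any nonzero constant). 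Either way, $\phi$ is constant.
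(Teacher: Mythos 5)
Your primary argument is correct, and it takes a genuinely different (and shorter) route than the paper. The paper's proof also starts from Corollary~\ref{cor:eta_zero_large_c} to get $\abs{\phi}\equiv 1$, but then lifts $\phi=e^{i\theta}$ with $\theta\in\mathcal{C}^2$ (using that $\phi$ is $\mathcal{C}^2_b$ and nonvanishing), substitutes into \eqref{eq:ode_phi_trav} to obtain $c\theta'+i\theta''-(\theta')^2=0$, takes the imaginary part to get $\theta''=0$, and concludes $\theta'=0$ from $\phi'=i\theta'e^{i\theta}\in L^2$. You instead feed $\abs{\phi}^2\equiv 1$ into the pointwise identity $\abs{\phi'}^2=\lambda\bigl(\abs{\phi}^2\ln\abs{\phi}^2-\abs{\phi}^2+1\bigr)$, which is indeed derived in the proof of Lemma~\ref{lem:another_energy_eq} for an arbitrary traveling wave in $E_\textnormal{logGP}$ (the $c$-dependent terms cancel when the two equations of \eqref{eq:vR} are multiplied by $\phi_1'$, $\phi_2'$ and summed, and the integration constant vanishes by the $L^1$ membership you cite). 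Since the right-hand side vanishes at $\abs{\phi}^2=1$, you get $\phi'\equiv 0$ immediately, bypassing the polar lifting altogether; what this buys is a proof that reuses an identity already on the table rather than introducing a new decomposition. Your alternative route is essentially the paper's proof; note only that the real part of the substituted equation reads $(\theta')^2=c\theta'$ rather than $-c\theta'$, a sign slip that is immaterial since, as in the paper, the conclusion follows from $\theta''=0$ together with $\theta'\in L^2$ alone.
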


\begin{proof}
    By Corollary \ref{cor:eta_zero_large_c}, we know that $\abs{\phi
      (x)} = 1$ for all $x \in \mathbb{R}$. Since $\phi$ is a
    $\mathcal{C}^2_b (\mathbb{R})$ function, there exists a
    real-valued function $\theta \in \mathcal{C}^2$ such that $\phi =
    e^{i \theta}$ (defined like in \eqref{eq:def_rho_theta} for
    instance). 
    By substitution in \eqref{eq:ode_phi_trav}, $\theta$ satisfies then
    \begin{equation*}
        c \theta' + i \theta'' - (\theta')^2 = 0.
    \end{equation*}
    By taking the imaginary part, we get $\theta'' = 0$. Since we must
    have $\phi' = i \theta' e^{i \theta} \in L^2$, we get $\theta' \in
    L^2$ and thus $\theta' = 0$. 
\end{proof}

\subsection{Nontrivial traveling waves}

We now consider the case $0 < c^2 < 2 \lambda$.
Define, for all $y > 0$,
\begin{align*}
    f_c (y) &\coloneqq \frac{c^2}{4} \Bigl( \frac{1}{y^3} - y \Bigr) + \lambda y \ln{y^2}, \\
    g_c (y) &\coloneqq - \frac{c^2}{4} \frac{(1 - y^2)^2}{y^2} +
              \lambda (y^2 \ln{y^2} - y^2 + 1). 
\end{align*}
The following lemma is established by direct calculations:
\begin{lem} \label{lem:analysis_fcts}
    Let $c$ such that $0 < c^2 < 2 \lambda$. There exists $0<y_0<y_1<1$
    such that the following holds:
    \begin{itemize}
\item $f_c$ has exactly two zeroes on $(0,+\infty)$: $y_1$ and $1$. 
    \item $f_c$ is positive on $(0,y_1)\cup (1,+\infty)$, negative on
      $(y_1,1)$. 
\item $g_c$ has exactly two zeroes on $(0,+\infty)$: $y_0$ and $1$.
\item $g_c$ is negative on $(0,y_0)$, positive on
  $(y_0,+\infty)\setminus \{1\}$.
\item There exists $C_c>0$ such that $\frac{1}{C_c} (1 - y)^2 \leq g_c
  (y) \leq C_c (1 - y)^2$ for all $y \in (y_1, 1)$.
    \end{itemize}
\end{lem}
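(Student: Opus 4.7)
The plan rests on the observation that $g_c'(y)=2 f_c(y)$, which a direct computation from the definitions confirms (using $\frac{d}{dy}(y^2\ln y^2)=2y\ln y^2 + 2y$, the fact that the $-y^2$ contribution cancels the $+2y$, and $\frac{d}{dy}[(1-y^2)^2/y^2]=2(y-y^{-3})$). With this identity in hand, the entire lemma reduces to analyzing $f_c$, from which the monotonicity, sign, and zero structure of $g_c$ can then be read off.

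For $f_c$, I would first establish strict convexity on $(0,+\infty)$ via $f_c''(y)=\frac{3c^2}{y^5}+\frac{2\lambda}{y}>0$, together with the boundary behavior $f_c(y)\to+\infty$ as $y\to 0^+$ (driven by $c^2/(4y^3)$) and as $y\to+\infty$ (driven by $\lambda y\ln y^2$); strict convexity then caps the number of zeros at two. Since $f_c(1)=0$ and $f_c'(1)=2\lambda-c^2>0$ (here the hypothesis $c^2<2\lambda$ is essential), $f_c$ is strictly increasing at $y=1$, hence positive on $(1,+\infty)$. Convexity makes $f_c'$ strictly increasing, and since $f_c'(y)\to-\infty$ as $y\to 0^+$ while $f_c'(1)>0$, there is a unique interior minimum $y^*\in(0,1)$ with $f_c(y^*)<0$; combined with $f_c(0^+)=+\infty$, the intermediate value theorem yields a unique second zero $y_1\in(0,y^*)\subset(0,1)$ and the sign pattern claimed.

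The identity $g_c'=2f_c$ then gives the monotonicity of $g_c$: strictly increasing on $(0,y_1]$, strictly decreasing on $[y_1,1]$, strictly increasing on $[1,+\infty)$. Combined with $g_c(0^+)=-\infty$ (from $-c^2/(4y^2)$), $g_c(1)=0$, and $g_c(+\infty)=+\infty$, this forces $g_c(y_1)>0$ (as the local maximum value, from which $g_c$ decreases down to $0$ at $y=1$), so monotone behavior on $(0,y_1]$ from $-\infty$ to $g_c(y_1)>0$ produces a unique zero $y_0\in(0,y_1)$, and the sign pattern $g_c<0$ on $(0,y_0)$, $g_c>0$ on $(y_0,+\infty)\setminus\{1\}$ follows.

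Finally, for the quadratic bound on $(y_1,1)$, I would use $g_c(1)=0$, $g_c'(1)=2f_c(1)=0$, and $g_c''(1)=2f_c'(1)=2(2\lambda-c^2)>0$, so Taylor expansion yields $g_c(y)\sim(2\lambda-c^2)(1-y)^2$ as $y\to 1$. Consequently $y\mapsto g_c(y)/(1-y)^2$ extends continuously to the closed interval $[y_1,1]$, with strictly positive values throughout: positive at $y=1$ by the Taylor expansion, positive at $y=y_1$ because $g_c(y_1)>0$, and positive on $(y_1,1)$ because $g_c>0$ there. Compactness then bounds this continuous ratio above and below by positive constants, which is precisely the claimed estimate. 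No serious obstacle is anticipated; every step reduces to convexity of $f_c$ and direct differentiation, made possible by the clean identity $g_c'=2f_c$.
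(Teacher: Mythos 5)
Your proof is correct and follows essentially the same route as the paper: the identity $g_c' = 2f_c$, the strict convexity $f_c''>0$ together with $f_c(1)=0$ and $f_c'(1)=2\lambda-c^2>0$ to locate the zeros and signs of $f_c$, and then the resulting variation of $g_c$ to find $y_0$ and the sign pattern. Your Taylor-plus-compactness argument for the quadratic bound on $(y_1,1)$ is a welcome elaboration of a step the paper dispatches with ``the lemma follows easily.''
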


\begin{proof}
  We compute
  \begin{equation*}
    f_c'(y) = \frac{c^2}{4}\(-\frac{3}{y^4}-1\)+2\lambda \ln y
    +2\lambda,\quad f_c''(y) = \frac{3c^2}{y^5}+\frac{2\lambda}{y}>0. 
  \end{equation*}
  As
  \begin{equation*}
    \lim_{y\to 0} f_c'(y) = -\infty,\quad \lim_{y\to +\infty} f_c'(y) = +\infty,
  \end{equation*}
  the derivative $f_c'$ has a unique zero on $(0,+\infty)$, which we
  denote by $y_2>0$. Since $f_c'(1) = 2\lambda-c^2>0$, we know that
  $0<y_2<1$: $f_c(1)=0$, hence $f_c(y_2)<0$, and there exists a unique
  $0<y_1<y_2$ such that $f_c(y_1)=0$. 
  \smallbreak

We note that  $f_c=\frac{1}{2}g_c'$, and in view of the above pieces
of information, we can draw:
\smallbreak

\begin{tikzpicture}
\tkzTabInit[lgt=1.2,espcl=2]{$y$ / 1 , $f_c(y)$ / 1 , $g_c(y)$ / 1}{0,
  $y_1$, $y_2$, $1$, $+\infty$} 
\tkzTabVar{D+ /  $+\infty$ , R/ , - / $-$, R/ , + / $+\infty$ }
\tkzTabIma{1}{3}{2}{$0$}
\tkzTabIma{3}{5}{4}{$0$}
\tkzTabVar{D- / $-\infty$ , + / $+$ ,  R/ , - / $0$ , + / $+\infty$ }
%{$+\infty$, z, -, z, $+\infty$}
%{-/,+/+, ,-/0,+/}
\end{tikzpicture}

In particular, $g_c$ is increasing on $(0,y_1)$, from $-\infty$ to a
positive value, and there exists a unique $y_0\in (0,y_1)$ such that
$g_c(y_0)=0$, and the lemma follows easily.   
\end{proof}

\begin{lem} \label{lem:carac_trav_waves}
    If $c$ is such that $0 < c^2 < 2 \lambda$, then $\rho = \abs{\phi}$ satisfies
    \begin{equation} \label{eq:rho_ode}
        \rho'' = f_c (\rho),
    \end{equation}
    \begin{equation} \label{eq:energy_ode_c_compl}
        (\rho')^2 = g_c (\rho).
    \end{equation}
    Moreover, there exists $x_0 \in \mathbb{R}$ such that $\rho (x_0)
    = y_0$, where $y_0$ is defined in Lemma~\ref{lem:analysis_fcts},
    and $\theta \in \mathcal{C}^2$, defined so that $\phi = \rho e^{i
      \theta}$, satisfies 
    \begin{equation} \label{eq:theta_ode}
        \theta' = \frac{c}{2} \Bigl( 1 - \frac{1}{\rho^2} \Bigr).
    \end{equation}
\end{lem}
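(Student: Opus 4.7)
The plan is to introduce polar coordinates on $\phi$ (which is licit because Corollary~\ref{cor:trav_wave_never_vanish} ensures $\phi$ never vanishes when $c\ne 0$), write $\phi=\rho e^{i\theta}$ with $\rho,\theta\in\mathcal C^2$, and substitute into \eqref{eq:ode_phi_trav}. Direct computation gives
\begin{equation*}
    \phi''=\bigl(\rho''-\rho(\theta')^2+2i\rho'\theta'+i\rho\theta''\bigr)e^{i\theta},\qquad -ic\phi'=\bigl(c\rho\theta'-ic\rho'\bigr)e^{i\theta},
\end{equation*}
so after dividing \eqref{eq:ode_phi_trav} by $e^{i\theta}$, the real and imaginary parts yield
\begin{equation*}
    \rho''-\rho(\theta')^2+c\rho\theta'=\lambda\rho\ln\rho^2,\qquad \rho\theta''+(2\theta'-c)\rho'=0.
\end{equation*}

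To obtain \eqref{eq:theta_ode}, I would rely on the Wronskian identity \eqref{eq:wronskien} established in the proof of Lemma~\ref{lem:another_energy_eq}: since $\phi_1'\phi_2-\phi_1\phi_2'=-\Im(\bar\phi\,\phi')=-\rho^2\theta'$, the relation $\phi_1'\phi_2-\phi_1\phi_2'=\frac{c}{2}(1-\rho^2)$ translates into $\rho^2\theta'=\frac{c}{2}(\rho^2-1)$, which is exactly \eqref{eq:theta_ode}. (Alternatively, one may directly integrate the imaginary part, noting that it is $\frac{d}{dx}[\rho^2(\theta'-c/2)]=0$, and fix the constant via the limits at infinity provided by Lemma~\ref{lem:solit_reg_lim}.) Plugging \eqref{eq:theta_ode} into the real part and simplifying gives
\begin{equation*}
    \rho''=\lambda\rho\ln\rho^2+\frac{c^2}{4\rho^3}-\frac{c^2}{4}\rho=f_c(\rho),
\end{equation*}
which is \eqref{eq:rho_ode}. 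Multiplying by $2\rho'$ yields $((\rho')^2)'=(g_c(\rho))'$ since $g_c'=2f_c$ (an elementary check), so $(\rho')^2=g_c(\rho)+K$ for some constant $K$. From Lemma~\ref{lem:solit_reg_lim} we have $\rho\to 1$ at $\pm\infty$, and $\rho\rho'=\Re(\bar\phi\,\phi')\to 0$ since $\phi$ is bounded and $\phi'\to 0$; hence $\rho'\to 0$ at $\pm\infty$, forcing $K=g_c(1)=0$ and giving \eqref{eq:energy_ode_c_compl}.

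Finally, for the existence of $x_0$ with $\rho(x_0)=y_0$, we use that the traveling wave is nontrivial (section title), so $\phi$ is nonconstant; note that $\rho\equiv 1$ would force $\theta'\equiv 0$ by \eqref{eq:theta_ode} and hence $\phi$ constant. Therefore $\rho$ is not identically $1$, and since $\rho$ is continuous with limits $1$ at $\pm\infty$, it attains its infimum at some $x_0\in\R$ with value strictly less than $1$. At this point $\rho'(x_0)=0$, so $g_c(\rho(x_0))=0$; by Lemma~\ref{lem:analysis_fcts}, the only zeros of $g_c$ on $(0,+\infty)$ are $y_0$ and $1$, and the sign information $(\rho')^2=g_c(\rho)\ge 0$ also forces $\rho\ge y_0$ everywhere. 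Since $\rho(x_0)<1$, we conclude $\rho(x_0)=y_0$. The main (minor) obstacle here is just bookkeeping: making sure the polar form is rigorously available (requires $\phi\ne 0$, i.e.\ $c\ne 0$), that the integration constants are pinned down by the asymptotic conditions at $\pm\infty$, and that the nondegeneracy $\rho\not\equiv 1$ is explicitly justified from nontriviality.
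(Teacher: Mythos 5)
Your argument follows the paper's proof almost step for step: polar decomposition justified by Corollary~\ref{cor:trav_wave_never_vanish}, imaginary part giving the ODE for $\theta'$, real part giving \eqref{eq:rho_ode}, multiplication by $\rho'$ and integration (with the constant killed by the limits at infinity) giving \eqref{eq:energy_ode_c_compl}. Your derivation of \eqref{eq:theta_ode} from the Wronskian identity \eqref{eq:wronskien} is a legitimate shortcut (that identity already encodes the vanishing of the integration constant at infinity), whereas the paper re-integrates $\rho\theta''+(2\theta'-c)\rho'=0$ and pins down $c_0=-c/2$ via $\phi'\in L^2$; both routes are fine.

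The one step that needs tightening is the existence of $x_0$. You assert that $\rho$ ``attains its infimum at some $x_0\in\R$ with value strictly less than $1$,'' but a priori nothing yet excludes the scenario $\rho\ge 1$ everywhere with $\rho>1$ somewhere, in which case the infimum equals $1$ and your argument stalls. The fix is exactly what the paper does: since $\rho$ is nonconstant, continuous, and tends to $1$ at $\pm\infty$, it attains a global extremum (maximum or minimum) at some finite point $x_0$ with $\rho(x_0)\ne 1$; there $\rho'(x_0)=0$, so $g_c(\rho(x_0))=0$, and Lemma~\ref{lem:analysis_fcts} (whose only zeroes of $g_c$ on $(0,\infty)$ are $y_0<1$ and $1$) forces $\rho(x_0)=y_0$. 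In particular the case of a maximum $>1$ is ruled out by the same sign analysis, which is the observation missing from your write-up. With that one-line repair the proof is complete.
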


\begin{proof}
    By Corollary \ref{cor:trav_wave_never_vanish}, $\phi$ never vanishes. It is also a $\mathcal{C}^2_b$ function. Therefore, we can define $\rho$ and $\theta$ as in \eqref{eq:def_rho_theta} so that $\phi = \rho e^{i \theta}$. They satisfy
    \begin{equation} \label{eq:trav_ode_theta_rho}
        - i c \rho' + c \rho \theta' + \rho'' - \rho (\theta')^2 + 2 i \rho' \theta' + i \rho \theta'' = \lambda \rho \ln{\rho^2}.
    \end{equation}
    By taking the imaginary part, we get
    \begin{equation} \label{eq:ode_theta}
        - c \rho' + \rho \theta'' + 2 \rho' \theta' = 0.
    \end{equation}
    As $\rho$ never vanishes, the solution of this ODE on $\theta'$ takes the form
    \begin{equation*}
        \theta' (x) = \frac{c}{2} + \frac{c_0}{\rho (x)^2},
    \end{equation*}
    where $c_0$ is a real constant. Then, there holds
    \begin{equation*}
        \phi' = \Bigl( \rho' + i \Bigl( \frac{c}{2} \rho + \frac{c_0}{\rho} \Bigr) \Bigr) e^{i \theta} \in L^2.
    \end{equation*}
    On the other hand, we already know that $\lim_{\pm \infty} \rho = 1$, thus we must have $c_0 = - \frac{c}{2}$, so that
    \begin{equation*}
        \theta' (x) = \frac{c}{2} \Bigl( 1 - \frac{1}{\rho (x)^2} \Bigr).
    \end{equation*}
    Now, \eqref{eq:trav_ode_theta_rho} reads
    \begin{align*}
        \rho'' &= - \frac{c^2}{2} \Bigl( \rho - \frac{1}{\rho} \Bigr) + \frac{c^2}{4} \rho \Bigl( 1 - \frac{1}{\rho^2} \Bigr)^2 + \lambda \rho \ln{\rho^2} \\
            &= - \frac{c^2}{2} \Bigl( \rho - \frac{1}{\rho} \Bigr) + \frac{c^2}{4} \Bigl( \rho - \frac{2}{\rho} + \frac{1}{\rho^3} \Bigr) + \lambda \rho \ln{\rho^2} \\
        &= \frac{c^2}{4} \Bigl( \frac{1}{\rho^3} - \rho \Bigr) + \lambda \rho \ln{\rho^2}.
    \end{align*}
    %
    % Once again, we can multiply by $\rho'$ and get
    % \begin{equation*}
    %     \frac{1}{2} \Bigl( (\rho')^2 \Bigr)' = \Bigl( - \frac{c^2}{8} \Bigl( \frac{1}{\rho^2} + \rho^2 \Bigr) + \frac{\lambda}{2} (\rho^2 \ln{\rho^2} - \rho^2 + 1) \Bigr)'
    % \end{equation*}
    % %
    % Since we know that $\lim_{\pm \infty} \rho = 1$ and $\lim_{\pm \infty} \rho' = 0$, we obtain
  After multiplication by $\rho'$ and integration, we find:
    \begin{equation*}
        (\rho')^2 = - \frac{c^2}{4} \frac{(1 - \rho^2)^2}{\rho^2} + \lambda (\rho^2 \ln{\rho^2} - \rho^2 + 1).
    \end{equation*}
    We also know that if $\rho \equiv 1$, then $\theta' \equiv 0$ and
    thus $\phi$ is constant. On the contrary, if $\phi$ is not
    constant, then $\rho$ is not either, and thus has an extremum since
    $\lim_{\pm \infty} \rho = 1$. Let $x_0$ be extremal. Then $\rho'
    (x_0) = 0$, and therefore $0 = g_c 
    (\rho (x_0))$, with $\rho (x_0) \neq 1$.  
   Lemma~\ref{lem:analysis_fcts} implies $\rho (x_0) = y_0$.
\end{proof}

\begin{lem} \label{lem:cauchy_rho_0}
    There exists a unique $\rho_c \in \mathcal{C}^2$ satisfying
    \eqref{eq:rho_ode}, $\rho_c (0) = y_0$ and $\rho_c' (0) =
    0$. Moreover, $\rho_c \geq y_0 > 0$, $\lim_{\pm \infty} \rho_c =
    1$ and $\rho_c - 1 \in H^1$. 
\end{lem}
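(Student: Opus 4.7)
The plan is to view \eqref{eq:rho_ode} as the autonomous first-order system $(\rho,\rho')'=(\rho',f_c(\rho))$. Since $f_c$ is $\mathcal C^\infty$ on $(0,\infty)$, the Cauchy--Lipschitz theorem yields a unique maximal solution $\rho_c\in \mathcal C^2(I)$, with $I\ni 0$ open and $\rho_c(I)\subset (0,\infty)$. Multiplying \eqref{eq:rho_ode} by $2\rho_c'$ and integrating from $0$, using that $g_c'=2f_c$ together with $\rho_c(0)=y_0$, $\rho_c'(0)=0$ and $g_c(y_0)=0$ from Lemma~\ref{lem:analysis_fcts}, yields \eqref{eq:energy_ode_c_compl} on $I$. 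Since $g_c<0$ on $(0,y_0)$ by Lemma~\ref{lem:analysis_fcts}, \eqref{eq:energy_ode_c_compl} forces $\rho_c\ge y_0$ throughout $I$. Moreover, $x\mapsto \rho_c(-x)$ solves the same Cauchy problem, so by uniqueness $\rho_c$ is even; we may therefore restrict attention to $x\ge 0$.

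Next I would establish monotonicity and global existence. Since $y_0\in (0,y_1)$ where $f_c>0$, one has $\rho_c''(0)=f_c(y_0)>0$, so $\rho_c'(x)>0$ for small $x>0$. As long as $\rho_c\in (y_0,1)$, \eqref{eq:energy_ode_c_compl} and Lemma~\ref{lem:analysis_fcts} give $g_c(\rho_c)>0$, hence $\rho_c'$ cannot vanish, and $\rho_c$ is strictly increasing. If $\rho_c(x^*)=1$ at some finite $x^*>0$, then \eqref{eq:energy_ode_c_compl} would force $\rho_c'(x^*)=0$, but $(1,0)$ is an equilibrium of the first-order system (as $f_c(1)=0$), so uniqueness would give $\rho_c\equiv 1$, contradicting $\rho_c(0)=y_0$. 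Thus $\rho_c\in [y_0,1)$ on $I\cap [0,\infty)$, which keeps $(\rho_c,\rho_c')$ bounded and bounded away from the singularity of $f_c$, so $I=\R$. The monotone bounded limit $L=\lim_{x\to+\infty}\rho_c(x)\in [y_0,1]$ exists, and passing to the limit in \eqref{eq:energy_ode_c_compl} together with $\rho_c''=f_c(\rho_c)$ bounded forces $\rho_c'(x)\to 0$, hence $g_c(L)=0$, so $L=1$ by Lemma~\ref{lem:analysis_fcts}. The even symmetry gives the limit at $-\infty$.

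For $\rho_c-1\in H^1$, the key input is the two-sided bound $\frac{1}{C_c}(1-y)^2\le g_c(y)\le C_c(1-y)^2$ for $y\in (y_1,1)$ from Lemma~\ref{lem:analysis_fcts}. Choose $x_1>0$ such that $\rho_c(x)\in (y_1,1)$ for $x\ge x_1$. Then on $[x_1,\infty)$, \eqref{eq:energy_ode_c_compl} and $\rho_c'>0$ give
\begin{equation*}
\frac{1}{\sqrt{C_c}}(1-\rho_c)\le \rho_c'=\sqrt{g_c(\rho_c)}\le \sqrt{C_c}\,(1-\rho_c),
\end{equation*}
so $(1-\rho_c)'\le -C_c^{-1/2}(1-\rho_c)$, yielding the exponential decay $1-\rho_c(x)\lesssim e^{-x/\sqrt{C_c}}$, and then $\rho_c'$ also decays exponentially by the upper bound. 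Hence $1-\rho_c,\rho_c'\in L^2([x_1,\infty))$, and by evenness $1-\rho_c,\rho_c'\in L^2(\R)$, so $\rho_c-1\in H^1(\R)$.

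The least routine step is ruling out that $\rho_c$ reaches $1$ in finite time and overshoots: one must exploit that $(1,0)$ is an equilibrium of the phase plane system (since $f_c(1)=g_c'(1)/2=0$) to invoke uniqueness, rather than merely the first-order equation \eqref{eq:energy_ode_c_compl}, whose right-hand side $\sqrt{g_c}$ is only H\"older continuous at the turning points $y_0$ and $1$.
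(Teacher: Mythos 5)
Your proof is correct and follows essentially the same route as the paper: Cauchy--Lipschitz for the smooth vector field on $(0,\infty)$, the first integral \eqref{eq:energy_ode_c_compl} obtained by multiplying by $\rho_c'$, the lower bound $\rho_c\ge y_0$ from the sign of $g_c$, exclusion of $\rho_c=1$ via uniqueness at the equilibrium $(1,0)$, monotone convergence to $1$, and exponential decay from the two-sided quadratic bound on $g_c$ near $1$. Your use of the evenness of $\rho_c$ (from the symmetry $x\mapsto\rho_c(-x)$ of the Cauchy problem) is a small streamlining over the paper's separate treatment of $x^+$ and $x^-$, but the substance is identical.
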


\begin{proof}
 Since $f_c$ is $\mathcal{C}^\infty$ on $(0, \infty)$, we can apply
Cauchy-Lipschitz Theorem to get a local solution. By multiplying
\eqref{eq:rho_ode} by $\rho_c'$ and integrating, this solution
 satisfies also \eqref{eq:energy_ode_c_compl}. 
Lemma~\ref{lem:analysis_fcts} then  yields $\rho_c\ge y_0 $,  
which proves that this solution is global.

 Then, by \eqref{eq:rho_ode} and Lemma \ref{lem:analysis_fcts}
    again, we get $\rho_c'' (0) > 0$. Hence there exists $\eps > 0$
    such that $\rho_c' < 0$ on $(- \varepsilon, 0)$ and $\rho_c' > 0$
    on $(0, \varepsilon)$, and $\rho_c > y_0$ on $(- \varepsilon,
    \varepsilon) \setminus \{ 0 \}$. 
    Let
    \begin{align*}
        x^+ &\coloneqq \sup \{ x > 0 \, | \, \rho_c' (y) > 0 \text{ for all } y \in (0, x) \}, \\
        x^- &\coloneqq \inf \{ x < 0 \, | \, \rho_c' (y) < 0 \text{ for all } y \in (x, 0) \}.
    \end{align*}
    From \eqref{eq:energy_ode_c_compl} and
    Lemma~\ref{lem:analysis_fcts} along with the fact that $\rho_c$ and
    $\rho_c'$ are continuous, we know that either $x^+ = + \infty$ or
    $\rho_c (x^+) = 1$, and similarly either $x^- = - \infty$ or $\rho_c
    (x^-) = 1$. However, if $\rho (x_1) = 1$ at some point $x_1 \in
    \mathbb{R}$, then \eqref{eq:energy_ode_c_compl} gives $\rho_c' (x_1)
    = 0$ and the uniqueness in Cauchy-Lipschitz Theorem for \eqref{eq:rho_ode}
    leads to $\rho_c \equiv 1$, hence a contradiction. Therefore, $x^+ =
    + \infty$ and $x^- = - \infty$, which implies that $\rho_c$ is
    decreasing on $(- \infty, 0)$, increasing on $(0, \infty)$ and
    $\rho_c (x) \in [y_0, 1)$ for all $x \in \mathbb{R}$. Thus, it has
    limits $\ell_\pm > y_0$ at $\pm \infty$ and must also satisfy
    $\lim_{\pm \infty} \rho_c' = 0$ in view of
    \eqref{eq:energy_ode_c_compl}. Therefore, the limit must satisfy 
    $g_c (\ell_\pm) = 0$, which leads to $\ell_\pm = 1$. 

    Last, since $\rho_c' > 0$ on $(0, \infty)$,
    \eqref{eq:energy_ode_c_compl} reads on this interval 
    \begin{equation*}
        \rho_c' = \sqrt{g_c (\rho_c)}.
    \end{equation*}
    By Lemma \ref{lem:analysis_fcts} and since $\rho_c (x) \geq y_1$ for $x$ large enough, we get
    \begin{equation*}
        \rho_c' \geq \sqrt{C_c} (1 - \rho_c).
    \end{equation*}
    Thus $\rho_c$ converges exponentially fast  to $1$ at $+ \infty$ by
    Gronwall lemma, and therefore  $\rho_c'$ converges exponentially
    fast to $0$. The same
     holds as $x \rightarrow - \infty$, hence
    the conclusion $\rho_c - 1 \in H^1$. 
\end{proof}

\begin{lem} \label{lem:exist_trav_waves_c}
    For  $c$ such that $0 < c^2 < 2 \lambda$ and  $\theta_0 \in
    \mathbb{R}$, define $\Theta_c$ by 
    \begin{equation*}
        \Theta_c' = \frac{c}{2} \Bigl( 1 - \frac{1}{\rho_c^2} \Bigr), \qquad
        \Theta_c (0) = \theta_0,
    \end{equation*}
    where $\rho_c$ is given by Lemma~\ref{lem:cauchy_rho_0}. 
    Then $\phi_c \coloneqq \rho_c e^{i \Theta_c} \in
    E_\textnormal{logGP}$ and $u$ defined by \eqref{eq:trav_wave} is a
    traveling wave for \eqref{logGP}. 
\end{lem}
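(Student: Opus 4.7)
The plan is to show the two claims in the statement: first that $\phi_c\in E_\textnormal{logGP}$, and then that $\phi_c$ solves \eqref{eq:ode_phi_trav} with $\omega=0$ (which by Theorem~\ref{th:value_omega} is the only admissible frequency anyway), so that $u(t,x)=\phi_c(x-ct)$ is a traveling wave solution.

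For membership in the energy space, I would rely on the characterization of $E_\textnormal{logGP}$ in Lemma~\ref{lem:E_logGP_descr}. Since $|\phi_c|=\rho_c$, Lemma~\ref{lem:cauchy_rho_0} immediately gives $|\phi_c|-1=\rho_c-1\in H^1(\R)\subset L^2(\R)$. For the gradient, I would compute
\begin{equation*}
  \phi_c' = \bigl(\rho_c'+ i\rho_c\Theta_c'\bigr)e^{i\Theta_c},\qquad
  |\phi_c'|^2 = (\rho_c')^2+\rho_c^2(\Theta_c')^2 = (\rho_c')^2 + \frac{c^2}{4}\frac{(1-\rho_c^2)^2}{\rho_c^2}.
\end{equation*}
The first term lies in $L^2$ by Lemma~\ref{lem:cauchy_rho_0}. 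For the second, the lower bound $\rho_c\ge y_0>0$ (also from Lemma~\ref{lem:cauchy_rho_0}) and the factorization $1-\rho_c^2=(1-\rho_c)(1+\rho_c)$ together with $\rho_c$ bounded and $1-\rho_c\in L^2$ give $(1-\rho_c^2)/\rho_c\in L^2$. Hence $\phi_c'\in L^2$, and Lemma~\ref{lem:E_logGP_descr} yields $\phi_c\in E_\textnormal{logGP}$.

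For the equation, I would simply reverse the calculation carried out in Lemma~\ref{lem:carac_trav_waves}. Differentiating the definition of $\Theta_c$ gives $\Theta_c''=c\rho_c'/\rho_c^3$. Compute
\begin{equation*}
  \phi_c''=\bigl(\rho_c''-\rho_c(\Theta_c')^2+2i\rho_c'\Theta_c'+i\rho_c\Theta_c''\bigr)e^{i\Theta_c},
\end{equation*}
and plug $\phi_c$, $\phi_c'$, $\phi_c''$ into $-ic\phi_c'+\phi_c''-\lambda\phi_c\ln|\phi_c|^2$. After factoring out $e^{i\Theta_c}$, the imaginary part is
\begin{equation*}
  -c\rho_c'+2\rho_c'\Theta_c'+\rho_c\Theta_c'' = -c\rho_c'+c\rho_c'\Bigl(1-\tfrac{1}{\rho_c^2}\Bigr)+\tfrac{c\rho_c'}{\rho_c^2}=0,
\end{equation*}
by the definition of $\Theta_c'$ and $\Theta_c''$. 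The real part reads
\begin{equation*}
  c\rho_c\Theta_c'+\rho_c''-\rho_c(\Theta_c')^2-\lambda\rho_c\ln\rho_c^2,
\end{equation*}
and substituting $\Theta_c'=\tfrac{c}{2}(1-1/\rho_c^2)$ together with $\rho_c''=f_c(\rho_c)=\tfrac{c^2}{4}(1/\rho_c^3-\rho_c)+\lambda\rho_c\ln\rho_c^2$ from \eqref{eq:rho_ode} makes everything cancel.

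No step presents a genuine obstacle: the only potentially delicate point is justifying $\rho_c\Theta_c'\in L^2$, which the lower bound $\rho_c\ge y_0>0$ resolves cleanly. The verification of the PDE is purely algebraic and is precisely the converse of the derivation in Lemma~\ref{lem:carac_trav_waves}. Since \eqref{eq:ode_phi_trav} holds with $\omega=0$, the function $u(t,x)=\phi_c(x-ct)$ of the form \eqref{eq:trav_wave} solves \eqref{logGP} in the classical (hence weak) sense, which by uniqueness (Theorem~\ref{th:regu_sol}) is the unique solution of \eqref{logGP} with datum $\phi_c$.
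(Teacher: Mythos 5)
Your proposal is correct and follows essentially the same route as the paper: the paper likewise verifies the ODE for $(\rho_c,\Theta_c)$ (it merely asserts the ``direct computations'' that you carry out explicitly) and then checks $\phi_c\in E_\textnormal{logGP}$ via $\rho_c-1\in H^1$ and $\phi_c'=\bigl(\rho_c'+i\tfrac{c}{2}\tfrac{\rho_c^2-1}{\rho_c}\bigr)e^{i\Theta_c}\in L^2$, using exactly the same ingredients ($\rho_c'\in L^2$, $\rho_c$ bounded and bounded away from $0$) from Lemma~\ref{lem:cauchy_rho_0} and the characterization in Lemma~\ref{lem:E_logGP_descr}.
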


\begin{proof}
    Direct computations show that $\Theta_c$ satisfies
    \eqref{eq:ode_theta}, and thus $(\rho_c,\Theta_c)$ satisfies
    \eqref{eq:trav_ode_theta_rho}, which is equivalent to $\phi_c$
    satisfying \eqref{eq:ode_phi_trav} or $u$ satisfying
    \eqref{logGP}. Therefore, we only have to prove that $\phi_c \in
    E_\textnormal{logGP}$. 

    By Lemma~\ref{lem:cauchy_rho_0}, we already know that $\abs{\phi_c} - 1 = \rho_c - 1 \in H^1$. Moreover,
    \begin{equation*}
        \phi_c' = \( \rho_c' + i \rho_c \Theta_c'\) e^{i \Theta_c} = \( \rho_c' + i \frac{c}{2} \frac{\rho_c^2 - 1}{\rho_c} \) e^{i \Theta_c}.
    \end{equation*}
    By Lemma \ref{lem:cauchy_rho_0} again, $\rho_c' \in L^2$ and
    $\rho_c$ is bounded, and bounded away from $0$, so that
    $\frac{\rho_c^2 - 1}{\rho_c} \in L^2$. We conclude  $\phi_c \in
    E_\textnormal{logGP}$ thanks to Lemma~\ref{lem:E_logGP_descr}. 
\end{proof}

\begin{proof}[End of the proof of Theorem~\ref{th:sol_wave}]
    Let $\phi \in E_\textnormal{logGP}$ and $u$ a traveling wave of
    the form \eqref{eq:trav_wave}. By applying Lemma
    \ref{lem:carac_trav_waves}, we get $x_0 \in \mathbb{R}$ such that
    $\rho (x_0) = y_0$ where $\rho = \abs{\phi}$, which yields $\rho'
    (x_0) = 0$ through \eqref{eq:energy_ode_c_compl}. Since $\rho (. +
    x_0)$ also satisfies \eqref{eq:rho_ode}, this function satisfies
    the assumptions of Lemma \ref{lem:cauchy_rho_0}. Thus $\rho (. +
    x_0) = \rho_c$, and since $\theta$ satisfies \eqref{eq:theta_ode},
    $\phi$ is of the form $\phi_c$ defined in Lemma
    \ref{lem:exist_trav_waves_c} for some $\theta_0$. 
\end{proof}

%%%%%%%%%%%%%%%%%%%%%%%%%%%%%%%%%%%%%%%%%%%%%%%%%%%%%%%%%%%%%%%%%%%%%%%

\section{Some open questions}
\label{sec:open}

\subsection*{Multidimensional solitary and traveling waves}

We have classified solitary and traveling waves in the one-dimensional
setting, using ODE techniques. The picture is of course rather
different for $d\ge 2$, see typically the case of traveling waves for
the Gross-Pitaevskii equation \eqref{eq:GP}, in
\cite{BeSa99,BeGrSa09}, where the role of hydrodynamical formulation
(via Madelung transform) is decisive.  See also \cite{BeGrSa08,ChMa17} and references
therein. 
\smallbreak

We note that contrary to the case with vanishing boundary condition at
infinity, one
cannot easily construct multidimensional waves from one-dimensional
ones by  using the tensorization property evoked in the introduction. Recall that if 
$u_1(t,x_1),\dots ,u_d(t,x_d)$ are solutions to the one-dimensional
logarithmic Schr\"odinger equation, then
$u(t,x):=u_1(t,x_1)\times\dots\times u_d(t,x_d)$ solves 
the $d$-dimensional logarithmic Schr\"odinger equation. In the case of 
 vanishing boundary condition at
infinity, $u_j(t,\cdot)\in L^2(\R)$, hence $u(t,\cdot)\in
L^2(\R^d)$; see e.g. \cite{CaFe21} for some applications. On the other
hand, if $u_j(t,\cdot)\in 
E_\textnormal{logGP}=E_\textnormal{GP}$ (the sets are
the same in the one-dimensional case), then even for $d=2$, 
\begin{equation*}
  (x_1,x_2)\mapsto u_1(t,x_1)u_2(t,x_2)
\end{equation*}
does not belong to the energy space
$E_\textnormal{logGP}=E_\textnormal{GP}$ (case of $\R^2$), as the
spatial derivatives are not in $L^2(\R^2)$.

\subsection*{Orbital stability}

In the case of the one-dimensional Gross-Pitaevskii equation, the
orbital stability of traveling waves was proven in \cite{GeZh09} by
using the fact that this equation is integrable, in the sense that it
admits a Lax pair, and solutions are analyzed thanks to
Zakharov-Shabat's inverse scattering method. It seems unlikely that
even for $d=1$, \eqref{logGP} is completely integrable, so the
stability of traveling waves will require a different
approach. 
\smallbreak

In \cite{Lin02,BGSS08}, the orbital stability of the black soliton is
studied without using the integrable structure, but variational
arguments. 
In \cite{BGS15} (dark soliton) and \cite{GrSm15} (black soliton), the
orbital stability is improved to asymptotic 
stability (the modulation factors, which consist of a translation
parameter and a phase shift, are analyzed). These approaches may be
more suited to 
\eqref{logGP}, but new 
difficulties due to the special form of the nonlinearity arise,
starting with the control of the presence of vacuum, $\{u=0\}$.

\subsection*{Scattering}

Another way to study stability properties consists in establishing a
scattering theory, considering the plane wave solution, $u(t,x) =
e^{ik\cdot x-i|k|^2t}$, $k\in \R^d$, as a reference solution. By
Galilean invariance, the analysis is reduced to the case $k=0$, and
the question is the behavior of the solution as $t\to +\infty$ when
the initial datum $u_0\equiv 1$ is perturbed. In such a framework, a
crucial object is the linearized Schr\"odinger operator about the
constant $1$. Using in addition normal form techniques, a scattering
theory for \eqref{eq:GP} was developed in
\cite{GuNaTs06,GuNaTs07,GuNaTs09} for $d\ge 2$, and resumed in
\cite{KiMuVi16,KiMuVi18} in the case of the three-dimensional
cubic-quintic Schr\"odinger equation (the quintic term introduces the
new difficulty of an energy-critical factor). 
\smallbreak

In the case of \eqref{logGP}, the new difficulty is due to the
fact that the logarithmic nonlinearity is not multilinear in $(u,\bar
u)$, reminding of the difficulty of giving sense to linearization in
such a context (see e.g. \cite{Fe21,Pel17}). 

\subsection*{Convergence toward other models}

It is well-established that in the long wave r\'egime, the
Gross-Pitaevskii equation converges to the KdV equation in the
one-dimensional case, and to the KP-I equation in the multidimensional
case; see e.g.  \cite{BGSS09,BGSS10,ChRo10}.  More precisely, we may
resume the general presentation from \cite{ChRo10}: considering the
nonlinear Schr\"odinger equation
\begin{equation*} i\d_t u + \Delta u = f\(|u|^2\)u,
\end{equation*} where the nonlinearity $f$ is such that $f(1)=0$ and
$f'(1)>0$,  the KdV and KP-I equations appear in the limit $\eps\to
0$, after the rescaling
\begin{equation*} t= c\eps^3\tau,\quad X_1=\eps(x_1-ct),\quad
X_j=\eps^2 x_j, \ j\in \{2,\dots,d\}. 
\end{equation*} Writing 
\begin{equation*} u(t,X) = \(1+\eps^2
A^\eps(t,X)\)e^{i\eps\varphi^\eps(t,X)},
\end{equation*} and plugging this expression into the equation solved
by $u$, the formal limit $\eps\to 0$ in the obtained system provides a
limit $A$ solution to the KdV equation if $d=1$, and to the KP-I
equation if $d\ge 2$. We emphasize that in the usual Gross-Pitaevskii
equation \eqref{eq:GP}, $f(y)=y-1$, in the logarithmic case
\eqref{logGP} with $\lambda=1$,  $f(y)=\ln y$, so we always have
$f(1)=0$ and $f'(1)>0$.  The general case considered in \cite{ChRo10}
is analyzed under the assumption $f\in \mathcal C^\infty (\R,\R)$,
which is not satisfied in the logarithmic case. This is probably not
the sharp assumption in order  for the arguments presented in
\cite{ChRo10} to remain valid, but it is most likely that the proof
uses a $\mathcal C^0$ regularity on $[0,+\infty)$, like in
\cite{Ch12,ChMa17,Ma13} (these papers are devoted to the  existence of
traveling waves in the same framework; $\mathcal C^1$
regularity of $f$ is required only near $1$). Again, the main difficulty in
the case of the logarithmic nonlinearity lies in the control of
vacuum, as $f$ fails to be continuous at the origin.

%%%%%%%%%%%%%%%%%%%%%%%%%%%%%%%%%%%%%%%%%%%%%%%%%%%%%%%%%%%%%%%%%%%%%%%

\bibliographystyle{abbrv}
\bibliography{biblio}
\end{document}